\def\leftacts{\ \ensuremath{%
    \rotatebox[origin=c]{-90}{$\circlearrowright$}}\ }
\def\rightacts{\ \ensuremath{%
    \rotatebox[origin=c]{90}{$\circlearrowleft$}}\ }
\theoremstyle{plain}
    \newtheorem{thm}{Theorem}[section]
    \newtheorem{claim}[thm]{Claim}
     \newtheorem{conjecture}[thm]{Conjecture}
    \newtheorem{corollary}[thm]{Corollary}
    \newtheorem{example}[thm]{Example}
    \newtheorem{lemma}[thm]{Lemma}
    \newtheorem{proposition}[thm]{Proposition}
    \newtheorem{question}[thm]{Question}
    \newtheorem{theorem}[thm]{Theorem}
\theoremstyle{definition}
    \newtheorem{notation}[thm]{Notation}
    \newtheorem*{notation*}{Notation and Terminology}
    \newtheorem{remark}[thm]{Remark}
\theoremstyle{remark}
\newcommand{\Rmnum}[1]{\expandafter\@slowromancap\romannumeral #1@}
\begin{document}

\title[Morphisms onto Fano manifolds]
{Boundedness of finite morphisms onto Fano manifolds with large Fano index}

\author{Feng Shao} 
\address
{
\textsc{Center for Complex Geometry, Institute for Basic Science (IBS), 55 Expo-ro,  Yuseong-gu, Daejeon, 34126, Republic of Korea.
}}
\email{shaofeng@amss.ac.cn, shaofeng@ibs.re.kr}
\author{Guolei Zhong}
\address
{
\textsc{Center for Complex Geometry, Institute for Basic Science (IBS), 55 Expo-ro,  Yuseong-gu, Daejeon, 34126, Republic of Korea.
}}
\email{zhongguolei@u.nus.edu, guolei@ibs.re.kr}

\begin{abstract}
Let $f:Y\to X$ be a finite  morphism between Fano manifolds $Y$ and $X$ such that the Fano index of $X$ is greater than 1. 
On the one hand, when both $X$ and $Y$ are fourfolds of Picard number 1, we show that the degree of $f$ is bounded in terms of $X$ and $Y$ unless $X\cong\mathbb{P}^4$; hence, such $X$ does not admit any non-isomorphic surjective endomorphism.
On the other hand, when $X=Y$ is either a fourfold or a del Pezzo manifold, we prove that, if $f$ is an int-amplified endomorphism, then $X$ is toric.
Moreover, we classify all the singular quadrics  admitting non-isomorphic  endomorphisms. 
\end{abstract}
%\begin{abstract}
%Let $X$ be a Fano manifold of Fano index $i(X)>1$. 
%First, when $\dim(X)=4$, the Picard number $\rho(X)=1$ and $X\not\cong\mathbb{P}^4$, we show that, for any smooth Fano fourfold $Y$ of $\rho(Y)=1$, there is a positive number $N$, such that
%every finite surjective morphism $f:Y\to X$ has the degree no more than $N$; in particular, such $X$ does not admit any non-isomorphic surjective endomorphism. 
%Second, when $X$ is either (i) a del Pezzo manifold of $\rho(X)=1$ or (ii) a (possibly singular) quadric  of  rank  $\ge 4$, we show that $X$ does not admit any non-isomorphic surjective  endomorphism. 
%Finally, when $\rho(X)\ge 2$ and  $i(X)\ge\lfloor\frac{\dim(X)+1}{2}\rfloor$ (which is the case if $\dim(X)=4$ and $i(X)>1$), we prove that such $X$ is toric if and only if it admits an int-amplified endomorphism. 
%\end{abstract}
\subjclass[2010]{
08A35, 
14E30,   %Minimal model program (Mori theory, extremal rays)
%14H30, % Coverings, fundamental group
%32H50, %iteration problem,
%11G10, %Abelian varieties of dimension > 1
%20K30 , %Automorphisms, homomorphisms, endomorphisms, etc.
14J35, 
14J40,  %Automorphisms, endomorphisms
14M25.  %Toric varieties, Newton polyhedra
%14J50, %Automorphisms of surfaces and higher-dimensional varieties
%32M05. %Complex Lie groups, automorphism groups acting on complex spaces
%11G10,  %Abelian varieties of dimension >1
%37B40 %Topological entropy
}

\keywords{Fano fourfolds, del Pezzo manifolds, Mukai manifolds, endomorphisms,  boundedness property,  toric variety}

\maketitle

\tableofcontents

\section{Introduction}
We work over the field $\mathbb{C}$ of complex numbers.
Given two smooth projective varieties $Y$ and $X$ of the same dimension, it is a natural topic to study  finite  morphisms  $f:Y\to X$ between them. 
If the canonical divisor $K_X$ is big, then the degree $\deg(f)$ is bounded in terms of the canonical divisors of $X$ and $Y$ (cf.~\cite{KO75}) and we say that such $f$ satisfies the \textit{boundedness property}. 
However, different from the varieties of general type, (weak) Calabi-Yau varieties and rationally connected varieties in general do not possess such a nice property any more. 
For example, if $Y$ is an abelian variety (resp. the projective space $\mathbb{P}^n$), then one can take $X=Y$ and take $f$ to be the multiplication map (resp. the coordinate map) such that the degree of $f$ can be  arbitrarily large. 

As suggested by Van de Ven, Amerik proposed the following conjecture in \cite{Ame97}. 
\begin{conjecture}[{\cite[Conjecture B]{Ame97}}]\label{main-conj-bounded}
Let $f:Y\to X$ be a finite  morphism between smooth projective varieties of dimension $n\ge 2$ and of the second Betti number $1$.
Suppose  $X\not\cong\mathbb{P}^n$. 
Then $\deg(f)$ is bounded in terms of the discrete invariants of  $Y$ and $X$. 
\end{conjecture}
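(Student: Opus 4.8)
The plan is to reduce the conjecture to bounding a single positive integer, and then to attack that bound through the geometry of minimal rational curves. Since $b_2(Y)=b_2(X)=1$, both varieties have Picard number one, and after noting that they are Fano we may write $-K_X=r_XH_X$ and $-K_Y=r_YH_Y$, with $H_X,H_Y$ the ample generators of the N\'eron--Severi groups and $r_X,r_Y$ the Fano indices (in the situation of interest $r_X\ge 2$). Then $f^*H_X$ is necessarily a positive multiple $aH_Y$ of $H_Y$, and $\deg f=a^n\,(H_Y^n)/(H_X^n)$, so it suffices to bound $a$ in terms of the discrete invariants. The ramification formula $K_Y=f^*K_X+R$ yields the identity $R=(ar_X-r_Y)H_Y\ge 0$; hence $a\ge r_Y/r_X$, and --- decisively --- a large $a$ forces $R$ to be a large effective divisor, i.e. $f$ to ramify very heavily. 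The question thus becomes: for $X\not\cong\mathbb{P}^n$, can $a$ be arbitrarily large?

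Next I would feed in a minimal dominating family of rational curves on $X$. Let $\ell\subset X$ be a general member, so $-K_X\cdot\ell=i_X$ is the pseudo-index ($i_X\ge r_X$) and $H_X\cdot\ell=d_X:=i_X/r_X$ is a fixed invariant. For general $\ell$ the scheme $f^{-1}(\ell)=\ell_1\cup\cdots\cup\ell_s$ is a disjoint union of smooth curves, each finite of some degree $e_j$ over $\ell\cong\mathbb{P}^1$ with $\sum_j e_j=\deg f$, and the projection formula gives $e_jd_X=a\,(H_Y\cdot\ell_j)$. Combining the Hurwitz formula for $\ell_j\to\mathbb{P}^1$ with the restriction $R\cdot\ell_j=(ar_X-r_Y)(H_Y\cdot\ell_j)$ of the ramification identity bounds the genus of $\ell_j$ and the ramification of $\ell_j\to\ell$; but in the limit $e_j,a\to\infty$ these one-curve inequalities only force $r_Xd_X\ge 2$, which always holds --- in particular they do not even exclude $\mathbb{P}^n$ or the quadric. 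So the missing ingredient is the global shape of the rational-curve families, i.e. the variety of minimal rational tangents.

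I would therefore analyze how $f$ interacts with VMRTs. At a general point $y\in Y$ where $f$ is \'etale, $df_y\colon T_yY\to T_{f(y)}X$ is an isomorphism, and I would track how it moves $\mathscr{C}^Y_y\subset\mathbb{P}(T_yY)$ relative to $\mathscr{C}^X_{f(y)}\subset\mathbb{P}(T_{f(y)}X)$, using that minimal rational curves of $Y$ push forward to (possibly non-minimal) rational curves of $X$ while the components $\ell_j$ above organize into covering families on $Y$. Since $X\not\cong\mathbb{P}^n$ forces $\mathscr{C}^X_x$ to be a proper closed subvariety of $\mathbb{P}(T_xX)$, an $f$ with unbounded $a$ should be incompatible either with the heavy-ramification constraint or with the Hwang--Mok Cartan--Fubini extension principle. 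For a four-dimensional $X$ of Fano index $\ge 2$ this can be made explicit via the classification: $X$ is $\mathbb{P}^4$, the quadric $Q^4$, a del Pezzo fourfold of degree $1\le d\le 5$, or a Mukai fourfold, and for each $X\not\cong\mathbb{P}^4$ the VMRT and the families of lines and conics are understood well enough that the above inequalities, now run on the side of $Y$ with $i_Y\le n+1$, pin down $a$.

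The step I expect to be the main obstacle is the mismatch of minimal rational curves under $f$: a minimal rational curve of $Y$ may map with degree onto a higher-degree rational curve of $X$, and a component $\ell_j$ of $f^{-1}(\ell)$ may acquire positive genus, so the convenient assertion that $df$ carries $\mathscr{C}^Y$ into $\mathscr{C}^X$ need not hold verbatim; without it, the Hurwitz and ramification inequalities alone certify only that $f$ is highly ramified, not that $a$ is bounded. Pinning down which rational curves of $Y$ dominate the minimal rational curves of $X$, and thereby using the hypothesis $X\not\cong\mathbb{P}^n$ (for fourfolds: $r_X\ge 2$ together with $X\not\cong\mathbb{P}^4$) essentially rather than cosmetically, is the crux --- and is presumably why the general conjecture remains open, the present results covering the fourfold case and, for endomorphisms, del Pezzo manifolds.
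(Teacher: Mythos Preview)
The statement you are attacking is a \emph{conjecture}, and the paper does not claim to prove it; it proves only special cases (Theorems~1.4 and~1.5, Corollary~3.2, Proposition~4.1, etc.). So there is no ``paper's own proof'' to compare with, and your proposal is not a proof but an outline that --- as you yourself concede in the last paragraph --- stalls at the crucial step. Concretely, the ramification/Hurwitz analysis on a single minimal rational curve yields only $r_Xd_X\ge 2$, which is vacuous, and the hoped-for compatibility $df(\mathcal{C}^Y_y)\subset\mathcal{C}^X_{f(y)}$ is exactly what you cannot establish: preimages of minimal rational curves may have positive genus or fail to be minimal. That is the gap, and it is a real one; nothing in your sketch bounds~$a$.

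It is also worth flagging that the reduction ``after noting that they are Fano'' is not automatic from $b_2=1$: $Y$ could a priori be Calabi--Yau or of general type (the paper handles the Calabi--Yau case by citing \cite{ARV99} and the general-type case by \cite{KO75}, so the Fano assumption is a legitimate restriction but needs to be stated as such).

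For contrast, the paper's actual proofs of the special cases use entirely different machinery and proceed by classification rather than by a uniform VMRT argument. The main engine is Lemma~2.3: if $\Omega_X(uH_X)$ is globally generated off a finite set and $c_n(\Omega_X(uH_X))>u^nH_X^n$, then $\deg f$ is bounded. For weighted hypersurfaces (hence del Pezzo manifolds of degree $\le 3$) this Chern-class inequality is checked by a residue computation (Proposition~3.3, Theorem~3.1). Degree~$4$ del Pezzo manifolds are handled by a double-cover trick onto a smooth quadric (Proposition~3.6), and degree~$5$ del Pezzo fourfolds by an Amerik-style argument exploiting a line $\ell$ with $\mathcal{N}_{\ell/X}$ having a negative summand (Proposition~4.1). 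Only the Mukai-fourfold case (Proposition~5.1) uses VMRTs, and there the point is that the VMRT is \emph{zero}-dimensional, so \cite{HM03} applies directly. None of these arguments attempts to track $\mathcal{C}^Y$ against $\mathcal{C}^X$ in the way you propose.
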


%Here, by \textit{discrete invariants} of a projective variety $X$ of Picard number 1, we mean the self-intersection $H^{\dim (X)}$  and the integer $r$ such that $K_X\equiv rH$, where $H$ is the ample generator of the N\'eron-Severi group $\textup{NS}(X)$. 
To the best knowledge of the authors, Conjecture \ref{main-conj-bounded} is known if $\dim (X)\le 3$ (\cite{ARV99} and \cite{Ame97}), or $X$ is a smooth quadric of  $\dim(X)\ge 3$ (\cite{Ame07}). 
Since  the (strict) Calabi-Yau case has been settled in \cite[Proposition 2.1]{ARV99}, our main interest here is to consider the case when both $Y$ and $X$ are   \textit{Fano}, i.e., the anti-canonical divisors $-K_Y$ and $-K_X$ are ample. 
More precisely, we ask: 
\begin{question}\label{ques-main-bounded-fano}
Let $X$ and $Y$ be  Fano manifolds of dimension $n$ and of Picard number $1$. 
Suppose that $X\not\cong\mathbb{P}^n$. 
Does there exist a positive number $N$ which depends only on $X$ and $Y$ such that every finite  morphism $Y\to X$ has the degree no more than $N$?
\end{question}

 Question \ref{ques-main-bounded-fano} implies the following long-standing Conjecture \ref{main-conj-pn} of the 1980s, which has been proved for (almost) homogeneous spaces (\cite{PS89}, \cite{HN11}), for Fano threefolds of Picard number 1 (\cite{ARV99}, \cite{Ame97}, \cite{HM03}), for Fano manifolds of Picard number 1 containing a rational curve with trivial normal bundle (\cite{HM03}), and for smooth  hypersurfaces of the projective space (\cite[Proposition 8]{PS89}, \cite{Bea01}; cf.~Corollary \ref{cor-usual-hyper}).
\begin{conjecture}\label{main-conj-pn}
Let $X$ be a Fano manifold of Picard number 1. 
Suppose that $X$ admits a non-isomorphic surjective endomorphism.
Then $X$ is a projective space.
\end{conjecture}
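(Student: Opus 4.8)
\noindent\textbf{Towards Conjecture~\ref{main-conj-pn}.} The plan is to deduce Conjecture~\ref{main-conj-pn} from the boundedness assertion of Question~\ref{ques-main-bounded-fano} and then to prove the latter via the geometry of minimal rational curves. For the reduction, observe that a non-isomorphic surjective endomorphism $f\colon X\to X$ of a Fano manifold with $\rho(X)=1$ contracts no curve (projection formula, since $\operatorname{Pic}(X)=\mathbb{Z}\cdot H$), hence is finite; writing $f^*H=aH$ one gets $a\ge 2$, as $a=1$ forces $f$ to be an isomorphism. Thus $\deg(f^{m})=a^{mn}\to\infty$, and a bound for the degrees of the iterates $f^{m}$ already forces $X\cong\mathbb{P}^{n}$. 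It therefore suffices to bound $\deg(g)$ for every finite morphism $g\colon Y\to X$ of Fano manifolds of Picard number $1$ with $r_X\ge 2$, in terms of the numerical invariants of $Y$ and $X$.

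I would next isolate the numerical content. Write $-K_X=r_XH_X$, $-K_Y=r_YH_Y$ with $H_X,H_Y$ the ample generators, and $g^*H_X=aH_Y$, $a\ge 1$. The ramification formula gives
\[
R \;=\; K_Y-g^*K_X \;=\; (r_Xa-r_Y)\,H_Y,
\]
which is effective, and nonzero unless $g$ is \'etale, hence---as $X$ is simply connected---unless $g$ is an isomorphism. So for non-isomorphic $g$ we have $r_Xa\ge r_Y+1$ and $\deg(g)=a^{n}\,(H_Y^{n})/(H_X^{n})$; everything then reduces to an \emph{upper} bound for the integer $a$, i.e.\ to showing that the effective divisor $R\in\lvert (r_Xa-r_Y)H_Y\rvert$, whose degree grows linearly in $a$, cannot be arbitrarily large for fixed $X$ and $Y$: a big branch divisor must be obstructed by the geometry of $X$.

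For that geometry I would use the minimal rational curves of $X$ and their variety of minimal rational tangents. Fix a minimal rational component $\mathcal K$ on $X$ with general member $\ell$; as $r_X\ge 2$, these curves cover $X$ and have small $H_X$-degree. For general $\ell$ the fibre $g^{-1}(\ell)$ decomposes into curves $C_i$ with $\sum_i\deg(g|_{C_i})=\deg(g)$, and the projection formula gives $H_Y\cdot C_i=\deg(g|_{C_i})\,(H_X\cdot\ell)/a$, so that each $C_i$ maps to $\ell$ with degree $\ge a/(H_X\cdot\ell)$ and is of small $H_Y$-degree. Combining $dg$ along these curves with a Riemann--Hurwitz estimate $R\cdot C_i\ge 2\bigl(\deg(g|_{C_i})-1\bigr)$ and the deformation theory of $\mathcal K$, one tries to identify the VMRT of $Y$ at a general point with that of $X$ up to an ambiguity measured by $\deg(g)$; since $X\not\cong\mathbb{P}^{n}$ forces the VMRT $\mathcal C^X_x\subsetneq\mathbb{P}(T_xX)$, confronting this rigidity with the size of $R$ is what should bound $a$. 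When $X$ carries a rational curve with trivial normal bundle this is exactly the situation of \cite{HM03} and one concludes at once; the smooth quadric is \cite{Ame07}.

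The main obstacle, and the reason the unconditional statement is presently within reach only in low dimension, is that del Pezzo manifolds (Fano index $n-1$) and Mukai manifolds (Fano index $n-2$) of small degree carry minimal rational curves whose normal bundle has the shape $\mathcal O(1)\oplus\mathcal O^{\oplus(n-2)}$ rather than trivial, so the Hwang--Mok criterion does not apply. Here I would run the argument through the classification of Fano manifolds of Picard number $1$ and coindex $\le 3$: for each del Pezzo fourfold of degree $d\in\{1,\dots,5\}$ and each Mukai fourfold, bound the multiplicity of $R$ along the family of lines using the explicit projective model (weighted hypersurface, cubic, complete intersection of quadrics, linear section of $\mathrm{G}(2,5)$, \dots), together with the second Chern class and the normal bundles of lines and conics. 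Completing this case analysis yields boundedness for all Fano fourfolds of Picard number $1$ with $r_X\ge 2$, hence Conjecture~\ref{main-conj-pn} in that range; and for the endomorphism statements one feeds the resulting non-isomorphic self-map into the equivariant minimal model program and the structure theory of int-amplified endomorphisms to deduce toricity.
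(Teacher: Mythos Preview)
This statement is a \emph{conjecture} in the paper, not a theorem; the paper proves it only in special cases (del Pezzo manifolds via Theorem~\ref{main-del-pezzo-non}, Fano fourfolds of index $>1$ via Theorem~\ref{rho=1-dim=4}). Your proposal is therefore not to be compared against a proof of the full statement---there is none---but against those partial arguments, and as a proof it has a genuine gap at the decisive step.

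The reduction to boundedness is correct and is exactly the paper's observation that Question~\ref{ques-main-bounded-fano} implies Conjecture~\ref{main-conj-pn}. The gap is the paragraph where you ``try to identify the VMRT of $Y$ with that of $X$'' via Riemann--Hurwitz along preimages of minimal rational curves. First, the estimate $R\cdot C_i\ge 2(\deg(g|_{C_i})-1)$ presupposes that each $C_i$ is a smooth rational curve, which you have not established; the preimage of a line under a high-degree cover may well be singular or of positive genus. Second, and more seriously, ``confronting the rigidity of $\mathcal C_x^X\subsetneq\mathbb{P}(T_xX)$ with the size of $R$'' is not an argument: you need a concrete inequality that bounds $a$, and none is supplied. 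That the VMRT is a proper subvariety gives no a priori bound on the degree of a cover. This is precisely why the conjecture remains open.

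The paper's actual partial results proceed by routes quite different from your sketch. For del Pezzo manifolds of degree $\le 3$ (and more generally weighted hypersurfaces) the paper does \emph{not} use VMRT rigidity or ramification along lines; it uses the Amerik--Rovinsky--Van de Ven Chern-class method (Lemmas~\ref{lem-arv-ggg} and~\ref{lem-arv-prop2.1}): if $\Omega_X(uH)$ is globally generated off a finite set and $c_n(\Omega_X(uH))>u^nH^n$, comparing top Chern classes under $f^*$ bounds $\deg(f)$. Proposition~\ref{pro-top-chern-cal} is an explicit residue computation of $c_n(\Omega_X(a_0+a_1))$ for weighted hypersurfaces, and Theorem~\ref{thm-wps-ci} verifies the needed inequality. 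For degree $4$ the paper exhibits a double cover onto a smooth quadric and invokes \cite{Ame07}; for degree $5$ in dimension $4$ it uses the explicit geometry of lines of the second kind and a splitting-of-normal-bundle argument following \cite{Ame97} (Proposition~\ref{pro-quintic-4fold}). The only place VMRT enters is the Mukai fourfold case (Proposition~\ref{pro-index=2}), where the VMRT is \emph{zero}-dimensional and \cite{HM03} applies directly. Your outline does not contain the Chern-class inequality that carries the bulk of the known cases, and the mechanism you propose---bounding $R$ via deformation theory of minimal curves---is not what the paper (or the existing literature) actually uses to close these cases.
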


It has been a long history for people to  classify projective varieties $X$ (not necessarily of Picard number 1) admitting a non-isomorphic endomorphism.
When $\dim (X)=2$, it has been well-understood by Nakayama (\cite{Nak20a} and \cite{Nak20b}).  
When  $\dim(X)\ge 3$, Meng and Zhang established the  \textit{equivariant minimal model program}, which provides us
the tools of endomorphism-descending along the  minimal model program  (\cite{MZ18} and \cite{Men20}).  Extending Conjecture \ref{main-conj-pn}, Fakhruddin asked the following question, which is the initial point of the second part of our paper.
Toric varieties having many interesting dynamical symmetries (cf.~\cite{Nak02}), Question \ref{main-ques-toric} can be seen as a converse direction. %to it.

\begin{question}[{\cite[Question 4.4]{Fak03}, \cite[Question 1.2]{MZg23}}]\label{main-ques-toric}
Let $X$ be a rationally connected smooth projective variety. 
Suppose that $X$ admits a polarized or an int-amplified endomorphism $f$. 
Is $X$ a toric variety?
\end{question}

A surjective endomorphism  $f:X\to X$ of a  projective variety $X$ is  \textit{polarized} if $f^*H\sim qH$ for some ample Cartier divisor $H$ on $X$ and integer $q>1$, and \textit{int-amplified} if $f^*H-H$ is ample for some ample Cartier divisor $H$ on $X$; see  \cite{MZ18} and \cite{Men20}. 
Clearly, every non-isomorphic surjective endomorphism on a projective variety of Picard number 1 is polarized and every polarized endomorphism is int-amplified.
We note that morphisms onto toric varieties do not satisfy any boundedness property.

Question \ref{main-ques-toric} is known to have a positive answer in the following cases: (i) $\dim (X)=2$ (\cite[Theorem 3]{Nak02}); 
(ii)  $X$ is a Fano threefold (\cite[Theorem 1.4]{MZZ22}); or (iii) $X$ is a Fano fourfold admitting a conic bundle structure  (\cite[Theorem 1.4]{JZ23}).

In this paper, we  study Questions \ref{ques-main-bounded-fano}, \ref{main-ques-toric} and Conjecture \ref{main-conj-pn} for Fano manifolds with large Fano index. 
The \textit{Fano index} (or simply \textit{index})  of a Fano manifold $X$ is the maximal integer $i(X)$ such that $-K_X\sim i(X)H$ for some integral ample divisor $H$. 
By a well-known theorem of Kobayashi and Ochiai \cite{KO73}, the  index satisfies $i(X)\le \dim(X)+1$; 
if $i(X)=\dim (X)+1$, then $X$ is the projective space, and if $i(X)=\dim(X)$, then $X$ is a smooth quadric. 
Theorems \ref{rho=1-dim=4} $\sim$ \ref{thm-middle-index} below are our main results.

First, Theorem \ref{rho=1-dim=4}  gives a positive answer to Question \ref{ques-main-bounded-fano}  for Fano fourfolds $X$ with  index $i(X)>1$ and thus verifies the boundedness property for morphisms onto such $X$. 
We refer readers to Question \ref{ques-fourfold-bounded} and Remark \ref{rem-index=1-dim=4} for the case $i(X)=1$.

\begin{theorem}\label{rho=1-dim=4}
Let $X$ be a smooth Fano fourfold  of Picard number 1 and of  index $i(X)>1$, which is different from $\mathbb{P}^4$. 
Then for any smooth Fano fourfold $Y$ of Picard number 1, there is a positive integer $N$ such that any surjective morphism $Y\to X$  has the degree no more than $N$. 
In particular, $X$ does not admit any non-isomorphic surjective endomorphism.
\end{theorem}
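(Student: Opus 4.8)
I would first reduce $X$ to a short list and then attack the surviving families. Since $i(X)>1$ and $X\not\cong\mathbb P^4$, the Kobayashi--Ochiai bound $i(X)\le 5$ leaves $i(X)\in\{2,3,4\}$: if $i(X)=4$ then $X\cong Q^4$ by \cite{KO73}; if $i(X)=3$ then $X$ is a del Pezzo fourfold of degree $d\in\{1,\dots,5\}$ (Fujita's classification); and if $i(X)=2$ then $X$ belongs to one of finitely many families of Mukai fourfolds (Mukai's classification). It therefore suffices to bound $\deg f$, in terms of $X$ and $Y$, for each of these finitely many families. The case $X\cong Q^4$ is \cite{Ame07}. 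If $X$ is a smooth hypersurface of a projective space --- in particular the cubic fourfold $V_3\subset\mathbb P^5$ --- the bound follows from \cite[Proposition~8]{PS89} and \cite{Bea01}; cf.\ Corollary~\ref{cor-usual-hyper}. I would try to extend that argument to smooth complete intersections, which would also cover $V_4\subset\mathbb P^6$ and the complete-intersection Mukai fourfolds; and for those Mukai fourfolds whose general line $\ell$ is free with trivial normal bundle $N_{\ell/X}\cong\mathcal O_{\mathbb P^1}^{\oplus 3}$ one may instead invoke \cite{HM03}. What remains is to treat the del Pezzo fourfolds $V_1,\dots,V_5$ and the Mukai fourfolds not covered above.

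For the remaining $X$ I would use a minimal rational component $\mathcal K$ whose general member $\ell$ is a free line: $H_X\cdot\ell=1$, $-K_X\cdot\ell=i(X)$, $\deg N_{\ell/X}=i(X)-2\in\{0,1\}$, and $\mathcal K$ covers $X$. Given a finite surjective $f\colon Y\to X$, since $\rho(Y)=1$ write $f^*H_X\equiv aH_Y$ with $a\in\mathbb Z_{>0}$ ($H_X,H_Y$ the ample generators). The ramification formula $K_Y=f^*K_X+R$ then reads $R\equiv(a\,i(X)-i(Y))H_Y$ with $R\ge 0$, and $\deg f=a^4H_Y^4/H_X^4$, so it is enough to bound the integer $a$. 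For a general $\ell$ the scheme $C:=f^{-1}(\ell)$ is reduced; write $C=C_1\cup\dots\cup C_k$ and $d_i:=\deg(f|_{C_i}\colon C_i\to\ell)$, so $\sum_i d_i=\deg f$. From $f^*H_X\cdot C_i=d_i$ and $f^*H_X\equiv aH_Y$ we get $a\mid d_i$, hence $d_i\ge a$ and $k\le\deg f/a$; moreover $-K_Y\cdot C_i=i(Y)d_i/a$. Because $\mathcal K$ covers $X$, each $C_i$ moves in a covering family of curves on $Y$, so (as $Y$ is Fano of Picard number $1$) its arithmetic genus is bounded by a Castelnuovo-type estimate in terms of $H_Y\cdot C_i=d_i/a$; and the generically injective map of normal sheaves $N_{C_i/Y}\hookrightarrow(f|_{C_i})^*N_{\ell/X}$, together with the value of $-K_Y\cdot C_i$, constrains the ramification indices of $f$ along $C_i$.

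The heart of the proof is then a Riemann--Hurwitz analysis of the covers $C_i\to\ell\cong\mathbb P^1$: feeding the bounds above into $2g(C_i)-2=-2d_i+\deg\mathfrak R_i$, where the ramification divisor $\mathfrak R_i$ is supported over the points $\ell\cap B$ of the branch divisor $B\equiv bH_X$, one obtains a numerical relation among $a$, $b$, $k$ and $\deg f=a^4H_Y^4/H_X^4$ that, for $a\gg 0$, can only be satisfied if $f^{-1}(\ell)$ breaks into very few components, each mapping to $\ell$ with ramification concentrated, to unbounded order, over $\ell\cap B$. \textbf{Excluding this last configuration is the main obstacle.} I would handle it using the explicit geometry of the families: in the reducible case, the identity $\sum_i d_i s_i=ab$ with $d_i\ge a$ and each component of $R$ of $H_Y$-degree $s_i\ge 1$ bounds $k$ and then pins $\deg f$ down to a fixed multiple of $H_X^4/H_Y^4$; in the irreducible (primitive) case one must show that the ramification divisor $R$ of the smooth fourfold $Y$ cannot be tangent, to unbounded order, to the general member of a covering family of rational curves on $Y$ --- a statement that should follow from the structure of the variety of minimal rational tangents $\mathcal C_x\subset\mathbb P(T_xX)$ (a finite set for Mukai fourfolds, a curve for del Pezzo fourfolds, a quadric surface for $Q^4$) --- so that the hypothetical high-order tangency would force $a\,i(X)-i(Y)=\deg R\cdot H_Y^3$ to grow at least linearly in $\deg f$, contradicting $\deg f=a^4H_Y^4/H_X^4$.

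Finally, the last assertion is immediate from the boundedness: a non-isomorphic surjective endomorphism $g\colon X\to X$ satisfies $g^*H_X\equiv qH_X$ with $q\ge 2$ (since $\rho(X)=1$), hence $\deg(g^m)=q^{4m}\to\infty$, contradicting the bound $N$ obtained by taking $Y=X$; so $X$ admits no such endomorphism. We refer to Remark~\ref{rem-index=1-dim=4} and Question~\ref{ques-fourfold-bounded} for the complementary case $i(X)=1$.
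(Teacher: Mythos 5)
Your reduction by index and the disposal of $Q^4$ (via \cite{Ame07}) and of the cubic fourfold (via Corollary \ref{cor-usual-hyper}) match the paper, but the core of your argument for the remaining cases --- the Riemann--Hurwitz analysis of the covers $C_i=f^{-1}(\ell)\to\ell$ --- is not a proof, and you say so yourself: ``excluding this last configuration is the main obstacle.'' The proposed way out (that unbounded-order tangency of the ramification divisor $R$ along a covering family of rational curves on $Y$ would force $\deg R\cdot H_Y^3$ to grow linearly in $\deg f$) is only a heuristic; no mechanism is supplied that converts high local ramification of $f|_{C_i}$ over the finitely many points of $\ell\cap B$ into a lower bound on the \emph{global} degree of $R$, and a priori $R$ can osculate curves to large order at special points without its numerical class growing. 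So the del Pezzo fourfolds of degrees $1,2,5$ (and really also $3,4$, since ``I would try to extend that argument to complete intersections'' is not an argument) are left open. A second, smaller gap: you leave ``the Mukai fourfolds not covered above'' as a residual case, but there are none --- the paper's Lemma \ref{lem-mukai-fourfold} shows every Mukai fourfold of Picard number $1$ is covered by lines (if the minimal degree were $2$ one would have $p=2$ and $X$ would be a quadric by Miyaoka \cite{Miy04}), and a standard line with $-K_X\cdot\ell=2$ has $p=0$, i.e.\ trivial normal bundle, so \cite[Theorem 2]{HM03} settles \emph{all} of index $2$ at once (Proposition \ref{pro-index=2}).

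The paper's route through index $3$ is entirely different from yours and does not touch $f^{-1}(\ell)$ for general lines. For degrees $d\le 3$ it realizes $X$ as a smooth weighted hypersurface, proves $\Omega_X(a_0+a_1)$ is globally generated away from finitely many points (Lemma \ref{lem-wps-gg}), computes $c_4(\Omega_X(a_0+a_1))$ by a residue calculation (Proposition \ref{pro-top-chern-cal}), and feeds the resulting strict inequality $c_4(\Omega_X(a_0+a_1))>(a_0+a_1)^4H_X^4$ into the Amerik--Rovinsky--Van de Ven comparison of Chern numbers (Lemmas \ref{lem-arv-ggg} and \ref{lem-arv-prop2.1}), which bounds $m$ in $f^*H_X\equiv mH_Y$ directly. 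For $d=4$ it diagonalizes the pencil of quadrics and projects to a smooth $Q^4$, reducing to \cite{Ame07} (Proposition \ref{pro-complete-intersection-bounded}). For $d=5$ --- the genuinely hard case --- it uses the fact that the lines of the second kind, those with $\mathcal{N}_{\ell/X}=\mathcal{O}(-1)\oplus\mathcal{O}(1)^{\oplus 2}$, cover a divisor (Proposition \ref{del-pezzo-fourfold-deg5-negative}), cuts $X$ by two general hyperplanes through such a line to get a quintic del Pezzo surface $S_1$ with $f^{-1}(S_1)$ having only isolated singularities, and derives a contradiction from the splitting of the pulled-back normal bundle sequence via Braun's criterion on the second infinitesimal neighborhood together with a vanishing argument showing $\operatorname{Pic}(S_2^{(2)})\to\operatorname{N}^1(S_2)$ has one-dimensional image when $\deg f\gg1$ (Proposition \ref{pro-quintic-4fold}). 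If you want to salvage your approach, the negative summand of $\mathcal{N}_{\ell/X}$ for second-kind lines --- not a Riemann--Hurwitz count --- is the lever the paper actually pulls; compare also Propositions \ref{thm-akp-index<=2} and \ref{prop-nor-bdle-negative}, which formalize exactly the step your sketch leaves open.
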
 

Recently, Kawakami and Totaro showed in  \cite{KT23} that if a normal projective variety \(X\) of Picard number 1 violates the boundedness property, then it satisfies Bott vanishing; see \cite[Proposition 2.7]{KT23}.
In this line, they gave a different approach towards the boundedness problem and 
proved a stronger version of our Theorem \ref{rho=1-dim=4} (cf.~\cite[Theorem B]{KT23}). 

Considering the dynamics on weighted projective hypersurfaces (cf.~Theorem \ref{thm-wps-ci} and Corollary \ref{coro-del-pezzo-bounded}), our next  result  confirms   Conjecture \ref{main-conj-pn} for del Pezzo manifolds (cf.~Remark \ref{rem-mukai-rho=1} for a partial answer on Mukai manifolds). 
Recall that a Fano manifold $X$ is said to be \textit{del Pezzo} (resp. \textit{Mukai}), if its index $i(X)=\dim(X)-1$ (resp. $i(X)=\dim(X)-2$).

\begin{theorem}\label{main-del-pezzo-non}
Let $X$ be a del Pezzo manifold of Picard number 1.
Then $X$ does not admit any non-isomorphic surjective endomorphism.
\end{theorem}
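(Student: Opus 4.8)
The strategy is to reduce to the classification of del Pezzo manifolds $X$ of Picard number $1$ (Fujita/Iskovskikh): these are $V_d$ with $1 \le d \le 5$, where $d = H^{n-1}$ is the degree with $-K_X \sim (n-1)H$. The cases $V_5, V_4, V_3$ are hypersurface-like: $V_3$ is a cubic hypersurface in $\mathbb{P}^{n+1}$, $V_4$ a complete intersection of two quadrics, and $V_5$ a linear section of the Grassmannian $\operatorname{Gr}(2,5)$. The cases $d=1,2$ are weighted complete intersections (double covers of $\mathbb{P}^n$ or of a quadric cone, sextic in $\mathbb{P}(1^n,3)$, etc.). Suppose toward a contradiction that $f\colon X\to X$ is a non-isomorphic surjective endomorphism. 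Since $\rho(X)=1$, $f$ is automatically polarized, hence $f^*H \sim qH$ with $q = \deg(f)^{1/(n-1)} > 1$ (comparing top self-intersections, $q^{n-1} H^n = (f^*H)^n = \deg(f)\,H^n$), so $q$ is a positive integer $\ge 2$ and $\deg(f) = q^{n-1}$. The plan is to apply the cited results on dynamics of weighted projective (complete intersection) hypersurfaces — Theorem \ref{thm-wps-ci} and Corollary \ref{coro-del-pezzo-bounded} referenced in the excerpt — to get a numerical obstruction, exactly as the classical argument of Paranjape–Srinivas and Beauville handles smooth hypersurfaces of $\mathbb{P}^n$.

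Concretely, I would argue as follows. For each del Pezzo manifold $X$ of Picard number $1$, realize $X$ as a (weighted) complete intersection in a (weighted) projective space or in a cone, and pull the endomorphism $f$ back to the ambient space or to the affine cone. In the unweighted hypersurface/complete-intersection cases ($d=3,4$, and $d=5$ via the Grassmannian embedding by $-K_X$), the boundedness of $\deg(f)$ in terms of $X$ is already implied by Theorem \ref{rho=1-dim=4} when $n=4$; for general $n$ one invokes the known hypersurface case of Conjecture \ref{main-conj-pn} (Paranjape–Srinivas \cite{PS89}, Beauville \cite{Bea01}, as recalled in the excerpt preceding Corollary \ref{cor-usual-hyper}) together with the complete-intersection analogue, which forces $X$ itself to be the non-example, a contradiction since $X\not\cong\mathbb{P}^n$. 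For the weighted cases $d=1,2$, the double-cover structure $\pi\colon X \to \mathbb{P}^n$ (or onto a quadric) gives a finite map of degree $2$ whose branch divisor $B$ has $-K_X = \pi^*(\text{something}) $; a polarized $f$ on $X$ descends (by the equivariant MMP of Meng–Zhang, \cite{MZ18}, \cite{Men20}, or directly because $f$ permutes the two sheets over the ramification) to a polarized endomorphism of $\mathbb{P}^n$, and one checks the branch locus $B$ must be totally invariant; but $B$ is a smooth hypersurface of degree $\ge 3$ (degree $6$ in $\mathbb{P}(1^{n+1},3)$-type for $d=1$, a quartic for $d=2$), which admits no totally invariant totally-branched structure under a non-isomorphic polarized endomorphism — contradicting the smooth-hypersurface case of Conjecture \ref{main-conj-pn}.

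The main obstacle is the $d=1$ and $d=2$ weighted cases, where $X$ is not literally a hypersurface in an ordinary projective space but only in a weighted one (or in a cone over a quadric), so the descent of $f$ and the "totally invariant branch divisor" step must be executed carefully: one needs that the polarized endomorphism preserves the Stein factorization of the anti-canonical (or half-anti-canonical) morphism, that the induced map on the base $\mathbb{P}^n$ or quadric is again polarized and non-isomorphic, and that the branch locus is $f$-invariant. This is where Theorem \ref{thm-wps-ci} (dynamics on weighted complete intersections) does the heavy lifting; granting it, one gets that either the base is not $\mathbb{P}^n$/quadric — impossible — or the branch hypersurface violates the smooth-hypersurface case of Conjecture \ref{main-conj-pn}. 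In all five cases one concludes no non-isomorphic surjective endomorphism exists, proving the theorem.
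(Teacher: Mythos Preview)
Your outline has two genuine gaps that the paper's proof avoids.

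\textbf{The descent step for $d=1,2$ does not work.} You propose to push $f$ down along the double cover $\pi\colon X\to\mathbb{P}^n$ (or over a quadric cone) via ``the equivariant MMP of Meng--Zhang'' or ``because $f$ permutes the two sheets''. But $\rho(X)=1$: the finite map $\pi$ is \emph{not} an extremal contraction, so the equivariant MMP says nothing about it, and there is no a priori reason for $f$ to commute with the covering involution $\sigma$. Descent along $\pi$ is equivalent to $f\sigma=\sigma f$, which you have not established. The paper never attempts such a descent; instead Theorem~\ref{thm-wps-ci} treats the weighted hypersurfaces of degrees $6,4,3$ in $\mathbb{P}(3,2,1^{\oplus n})$, $\mathbb{P}(2,1^{\oplus(n+1)})$, $\mathbb{P}(1^{\oplus(n+2)})$ directly: it shows $\Omega_X(a_0+a_1)$ is globally generated off finitely many points and that $c_n(\Omega_X(a_0+a_1))>(a_0+a_1)^n\mathcal{O}_X(1)^n$, which via Lemma~\ref{lem-arv-prop2.1} bounds the degree of \emph{any} finite map onto $X$. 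No branch-divisor or invariance argument is needed. (Your last paragraph cites Theorem~\ref{thm-wps-ci} as ``doing the heavy lifting'' for the descent, but that theorem is a Chern-class bound, not a descent statement.)

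\textbf{The case $d=5$ is not a hypersurface argument.} A linear section of $\mathrm{Gr}(2,5)$ is not a (weighted) hypersurface or complete intersection in any $\mathbb{P}^N$, so neither Beauville's argument nor a ``complete-intersection analogue'' applies. The paper handles $d=5$ by a completely different mechanism: for $n=4,5,6$ the quintic del Pezzo is (almost) homogeneous, and one invokes \cite[Proposition~2]{PS89} and \cite[Theorem~1.4]{HN11}. For $d=4$ the paper also does not use a generic complete-intersection analogue; it exploits the simultaneous diagonalization of the two quadrics to project $X$ two-to-one onto a smooth quadric $Q^n$ and then applies Amerik's boundedness for maps onto quadrics (Proposition~\ref{pro-complete-intersection-bounded}).

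A minor slip: $(f^*H)^n=q^nH^n$, so $\deg(f)=q^{n}$, not $q^{n-1}$; and the degree is $d=H^n$, not $H^{n-1}$.
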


When studying Conjecture  \ref{main-conj-bounded}, we tried to extend \cite{Ame07} to singular quadrics.
Indeed, if a Fano manifold $X$ admits a finite cover over a quadric, then the boundedness property for morphisms onto quadrics would imply the boundedness property for morphisms onto $X$. 
For example, if $X$ is a smooth complete intersection of two quadrics, we use this   observation to confirm Conjecture  \ref{main-conj-bounded}  (cf.~Proposition \ref{pro-complete-intersection-bounded}).

Theorem \ref{thm-singular-quadric} below  gives an analogue to Conjecture \ref{main-conj-pn} which shows some evidence towards our expectation. 
Besides, Theorem \ref{thm-singular-quadric}  extends \cite[Proposition 8]{PS89} and has its own independent interests (cf.~Remark \ref{rem-quadric-totally}).

\begin{theorem}\label{thm-singular-quadric} 
Let $X$ be a quadric hypersurface in $\mathbb{P}^n$,  i.e., after a linear map, $X$ is defined by  $\sum_{i=0}^{k}x_i^2=0$ where $[x_0:\cdots:x_n]$ are the homogeneous coordinates of $\mathbb{P}^n$.
Then  $X$ does not admit any non-isomorphic surjective  endomorphism if and only if $k\ge 4$. 
\end{theorem}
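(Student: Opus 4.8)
The statement is an "if and only if", so there are two directions. For the "only if" direction we must construct, for each $k\le 3$, a non-isomorphic surjective endomorphism of the singular quadric $X=\{\sum_{i=0}^k x_i^2=0\}\subset\mathbb P^n$. The plan is to exploit the \emph{cone structure}: when $k\le 3$ the quadric $X$ is a cone over a lower-dimensional quadric $X_0=\{\sum_{i=0}^k x_i^2=0\}\subset\mathbb P^k$ with vertex the linear subspace $V=\{x_0=\cdots=x_k=0\}\cong\mathbb P^{n-k-1}$, and $X_0$ is one of $\mathbb P^1$ ($k=1$), a singular conic / $\mathbb P^1\sqcup\mathbb P^1$ degenerate case, the smooth conic $\cong\mathbb P^1$ ($k=2$), or the smooth quadric surface $\mathbb P^1\times\mathbb P^1$ ($k=3$) — in every case a variety carrying non-isomorphic endomorphisms of arbitrary degree. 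One then lifts a power map on the base together with a linear (or coordinate-type) map on the cone directions. Concretely, for $k=2$ write $X$ as the cone over the conic $x_0^2+x_1^2+x_2^2=0$; after a coordinate change the conic is $x_0x_1=x_2^2$, so $X=\{x_0x_1=x_2^2\}$, and the map sending $x_0\mapsto x_0^d$, $x_1\mapsto x_1^d$, $x_2\mapsto x_2^d$, $x_j\mapsto x_j^d$ ($j>2$) — or a suitable weighted variant — gives a non-isomorphic endomorphism; $k=1$ is the cone over two points, i.e. a union of two linear spaces, which is not irreducible, so one uses $k=2,3$ plus $k=3$ being the cone over $\mathbb P^1\times\mathbb P^1$ to handle the irreducible cases, and treats the reducible/low cases separately. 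The cleanest uniform approach is: $X$ with $k\le 3$ is toric (it is a cone over a toric quadric with a torus-fixed base), and toric varieties admit non-isomorphic endomorphisms, e.g. the $d$-th power map on the torus extends since $X$ is normal; this simultaneously dispatches all $k\le 3$.

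For the "if" direction we must show that for $k\ge 4$ the quadric $X$ admits \emph{no} non-isomorphic surjective endomorphism. Here I would split according to whether $X$ is smooth. If $k=n$, then $X$ is a smooth quadric of dimension $n-1\ge 3$, and the conclusion is exactly Amerik's result \cite{Ame07} (or, for the weaker statement that there is no non-isomorphic endomorphism, it also follows from Conjecture \ref{main-conj-pn} being known for smooth quadrics). The genuinely new case is $4\le k\le n-1$, where $X$ is a \emph{singular} quadric, a cone over the smooth quadric $X_0\subset\mathbb P^k$ of dimension $k-1\ge 3$ with vertex $V\cong\mathbb P^{n-k-1}$. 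The plan is: let $f:X\to X$ be a surjective endomorphism. First show $f$ must preserve the singular locus, which is exactly the vertex $V$ (the singular locus of $X$ is $V$, and it is preserved set-theoretically by any automorphism-like map, hence by any finite endomorphism since $f^{-1}(\operatorname{Sing} X)\supseteq \operatorname{Sing} X$ and a dimension/finiteness count forces equality). Then $f$ descends to a surjective endomorphism $\bar f$ of the base $X_0$: projecting from $V$ realizes $X\setminus V$ as a projective bundle (or affine-line bundle) over $X_0$, $f$ sends fibers into fibers because it sends lines through $V$ to lines through $V$ (the lines through the vertex are the fibers, and $f$ maps the ruling to itself as it preserves $V$ and $\deg f<\infty$), and so there is an induced $\bar f:X_0\to X_0$. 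By the smooth-quadric case, $\bar f$ is an isomorphism; one then argues that $f$ itself must be an isomorphism, by analyzing the induced map on the $\mathbb P^{n-k}$-fibers (a finite self-map of $\mathbb P^{n-k}$ over an isomorphism of the base, together with the constraint that the total space is the specific quadric cone and not an arbitrary bundle) — or more efficiently by a Picard-group / polarization argument: $X$ has Picard number $1$ (it is a normal quadric with $4\le k$, so $\operatorname{Cl}(X)$ has rank $1$, generated by the hyperplane class), hence any non-isomorphic $f$ would be polarized with $f^*H\sim qH$, $q>1$; pull back to a resolution $\widetilde X$ (the blow-up of the vertex, a $\mathbb P^{n-k}$-bundle over $X_0$ or the projectivization of $\mathcal O\oplus\mathcal O(-1)^{\oplus(n-k)}$-type bundle) and derive a contradiction from the action of $f$ on the two extremal rays, exactly one of which is $K$-negative of "fiber type" and must be preserved, forcing $\bar f$ to be polarized of the same degree $q$ on $X_0$, contradicting that endomorphisms of the smooth quadric $X_0$ of dimension $\ge 3$ are isomorphisms.

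The main obstacle I anticipate is the descent-to-the-base step in the singular case and, relatedly, making rigorous that $f$ respects the cone/ruling structure: a priori $f$ need not be linear, so "lines through the vertex go to lines through the vertex" needs justification — the right tool is that the lines through $V$ are precisely the minimal rational curves (degree-$1$ rational curves) through points of $V$, or that they form the unique maximal family of lines sweeping out $X$ and meeting at $V$; $f$ being finite sends degree-$1$ rational curves to rational curves of bounded degree, and the combinatorics of which such curves pass through $\operatorname{Sing} X=V$ pins the ruling down. Once the induced $\bar f$ on the smooth base $X_0$ exists, invoking \cite{Ame07} finishes that half, and the promised extension of \cite[Proposition 8]{PS89} is exactly this bootstrap from the smooth case to the cones. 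I would present the toric observation as the quick proof of the "only if" direction and devote the bulk of the write-up to the ruling-descent argument for $4\le k\le n-1$.
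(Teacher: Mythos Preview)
Your ``only if'' direction is fine and matches the paper's: for $k\le 3$ one writes the quadric in a toric-friendly form (e.g.\ $x_0x_1=x_2x_3$ for $k=3$, $x_1^2=x_0x_2$ for $k=2$) and the coordinatewise $q$-th power map does the job. Your smooth case $k=n$ is also fine; the reference you want is \cite[Proposition~8]{PS89} rather than \cite{Ame07}, since you only need nonexistence of non-isomorphic endomorphisms, not boundedness.

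The singular case $4\le k<n$ has a genuine gap. Your argument hinges on the claim that $f$ preserves the vertex $V=\operatorname{Sing}X$ set-theoretically and, further, that $f$ respects the ruling by linear $\mathbb P^{n-k}$'s through $V$. Neither is justified. The assertion $f^{-1}(\operatorname{Sing}X)\supseteq\operatorname{Sing}X$ is not automatic for finite endomorphisms of normal varieties: a singular germ can map finitely onto a smooth germ. Concretely, after extending $f$ to $g=[G_0:\cdots:G_n]$ on $\mathbb P^n$ (which can be done, cf.\ \cite[Proposition~2.2]{Fak03}), the condition $g(X)\subseteq X$ gives $\sum_{i\le k}G_i^2\in(\sum_{i\le k}x_i^2)$; restricting to $V=\{x_0=\cdots=x_k=0\}$ yields $\sum_{i\le k}(G_i|_V)^2=0$ as a polynomial identity, but over $\mathbb C$ this does \emph{not} force each $G_i|_V=0$, so $g(V)\subseteq V$ is not guaranteed. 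Without $f^{-1}(V)=V$ you cannot lift $f$ to the blow-up $\widetilde X\to X$ along $V$, and your ``two extremal rays'' argument on the $\mathbb P^{n-k}$-bundle never gets started. Even granting $f(V)\subseteq V$, your ruling claim fails: a line $\ell$ through $V$ has $f(\ell)$ of degree $q/(\deg f|_\ell)$ in $X$, which need not be $1$, so $f(\ell)$ need not be a ruling line.

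The paper's proof avoids descent entirely and goes in the opposite direction. It first extends $f$ to a polarized $g\colon\mathbb P^n\to\mathbb P^n$ via \cite[Proposition~2.2]{Fak03}. Then it chooses a \emph{general} linear subspace $S\cong\mathbb P^k$ disjoint from both $L:=V$ and $g^{-1}(L)$ (note the paper makes no claim that $g^{-1}(L)=L$; it only uses $\dim g^{-1}(L)=\dim L=n-k-1<k$). The linear projection $\tau\colon\mathbb P^n\dashrightarrow S$ from $L$ is then defined on $g(S)$, and $g_S:=\tau\circ g|_S$ is a genuine non-isomorphic endomorphism of $S\cong\mathbb P^k$. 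Because the defining equation of $X$ involves only $x_0,\ldots,x_k$, the projection $\tau$ sends $X\cap g(S)$ into $X\cap S$, so $g_S$ restricts to a non-isomorphic endomorphism of the smooth quadric $X\cap S\subseteq\mathbb P^k$ of dimension $k-1\ge 3$, contradicting \cite[Proposition~8]{PS89}. The key idea you are missing is ``extend up to $\mathbb P^n$, then slice and project from the singular locus,'' rather than ``descend along the cone structure.''
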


In the following, we treat the case when the Picard number $\ge 2$  and  we answer Question \ref{main-ques-toric} affirmatively if the index is large (cf.~Theorem \ref{thm-higher-pic}).

\begin{theorem}\label{thm-middle-index}
Let $X$ be a  Fano manifold of Picard number $\ge 2$. 
Suppose that the index $i(X)\ge \lfloor\frac{\dim(X)+1}{2}\rfloor$ (which is the case if $X$ is  (i) a Fano fourfold of $i(X)>1$, or (ii) a del Pezzo manifold, or (iii) a Mukai manifold of dimension $\ge 4$). 
Then $X$ is toric if and only if $X$ admits an int-amplified endomorphism. 
\end{theorem}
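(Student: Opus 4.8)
I would begin with the direction that is essentially formal. If $X$ is a smooth projective toric variety, then for any integer $q\ge 2$ multiplication by $q$ on the cocharacter lattice preserves every cone of the fan of $X$, hence extends to a finite surjective endomorphism $f$ with $f^{*}H\sim qH$ for any torus-invariant ample divisor $H$; so $f$ is polarized, in particular int-amplified.

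For the converse, let $f\colon X\to X$ be an int-amplified endomorphism. All three families in the hypothesis satisfy $i(X)\ge\lfloor(\dim X+1)/2\rfloor$, and the plan combines two ingredients: the classification of Fano manifolds of Picard number $\ge 2$ and index at least $\lfloor(\dim X+1)/2\rfloor$ (Wi\'sniewski's structure theorems for Fano manifolds of large index, Fujita's list of del Pezzo manifolds, and the classification of Mukai manifolds of Picard number $\ge 2$, all recalled in the earlier sections), together with the equivariant minimal model program of Meng--Zhang \cite{MZ18,Men20}. By the latter, after replacing $f$ by a suitable iterate --- still int-amplified --- one may assume that $f$ fixes each of the finitely many extremal contractions of $X$; in particular $f$ descends along any Mori fiber space contraction $\varphi\colon X\to W$ to an int-amplified endomorphism $g$ of $W$, by the standard descent lemma for int-amplified endomorphisms. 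Inspecting the classification, every such $X$ is, after obvious reductions, of one of the following kinds: (a) toric; (b) a non-trivial product $X_{1}\times X_{2}$; (c) a threefold; (d) a Mori fiber space over a Fano $W$ with $\rho(W)=1$ and $W\not\cong\mathbb{P}^{\dim W}$; (e) a Mori fiber space over a projective space whose general fiber is a projective space or a quadric.

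Case (a) is vacuous and case (c) is \cite[Theorem 1.4]{MZZ22}. In case (b), the two projections are Mori fiber space contractions (after passing to $f^{2}$ when $X_{1}\cong X_{2}$), so $f$ induces int-amplified endomorphisms on $X_{1}$ and $X_{2}$; an induction on dimension reduces to the situation of a factor of Picard number $1$, which --- being a Fano manifold of Picard number $1$ in the classification --- must by Theorem~\ref{main-del-pezzo-non} and the results quoted in the introduction for smooth hypersurfaces and quadrics be a projective space, so that $X$ is a product of projective spaces. In case (d), the descended endomorphism $g$ of $W$ is int-amplified, hence polarized; but $W$ has Picard number $1$, is not a projective space, and has bounded dimension because of the large index of $X$, so it is one of the Fano manifolds for which Conjecture~\ref{main-conj-pn} is already known --- a quadric, or a del Pezzo manifold by Theorem~\ref{main-del-pezzo-non} --- which admits no non-isomorphic endomorphism, a contradiction; so (d) cannot occur. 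In case (e): if the general fiber is a conic then $\dim X\le 4$ (the index bound excludes higher-dimensional conic bundles), and the conclusion is \cite[Theorem 1.4]{JZ22} when $\dim X=4$ and reverts to (c) when $\dim X=3$; if the general fiber is a quadric of dimension at least three (a two-dimensional quadric fiber being a product of two lines, which yields a further conic bundle structure), the general fiber has no non-isomorphic endomorphism, so $f$ restricts to an isomorphism on it, which pins $f$ down in terms of $g\colon\mathbb{P}^{m}\to\mathbb{P}^{m}$ and reduces to the base $\mathbb{P}^{m}$; and if the general fiber is a projective space, then either the corresponding bundle $\mathcal{E}$ is a direct sum of line bundles and $X$ is toric, or one uses the second extremal contraction of $X$ to revert to an earlier case, or one descends $f$ to $g\colon\mathbb{P}^{m}\to\mathbb{P}^{m}$, observes that $f$ is fiberwise an isomorphism so that $g^{*}\mathcal{E}\cong\mathcal{E}\otimes\mathcal{O}_{\mathbb{P}^{m}}(d)$ as projective bundles, and derives a contradiction from the indecomposability of $\mathcal{E}$ (a Chern-class computation suffices when, after a twist, $\mathcal{E}$ is a pullback of $T_{\mathbb{P}^{m}}$). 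In all cases $X$ is toric.

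The main obstacle is to make this argument uniform. The equivariant MMP only guarantees that $f$ permutes the extremal rays, so for each family one must verify that the contraction to be descended along is fixed by a controlled iterate of $f$; this is automatic when that contraction is the unique one of its type, but for symmetric contractions --- products, $\mathbb{P}(T_{\mathbb{P}^{2}})$ --- one must pass to $f^{2}$. More seriously, case (e) needs the rigidity input that for a finite endomorphism $g$ of $\mathbb{P}^{m}$ of degree $>1$ the relation $g^{*}\mathcal{E}\cong\mathcal{E}\otimes\mathcal{O}(d)$ is possible only when $\mathcal{E}$ is a direct sum of line bundles; together with the bookkeeping needed to confirm that the classification really leaves only the cases above, this is where most of the work lies. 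The product case, the base-descent case, the conic-bundle case, and the threefold case are comparatively routine once the quoted results and the classification are in hand.
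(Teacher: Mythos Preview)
Your overall architecture---classification plus equivariant descent---is the same as the paper's, but two parts of your case~(e) do not close, and your five cases do not cover the whole list.

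First, the quadric-fibre argument is unfinished. Even granting that the fibre map $X_{w}\to X_{g(w)}$ between smooth quadrics of dimension $\ge 3$ is an isomorphism, you only obtain $\deg f=\deg g$; the phrase ``reduces to the base $\mathbb{P}^{m}$'' does not give a contradiction, nor does it show $X$ is toric. The paper's route is different: by \cite[Theorem~5.1]{Fak03} choose a general $g$-periodic point $y\in Y$, so that after iteration $f|_{X_{y}}$ is a self-map, and by \cite[Lemma~2.3]{Men20} this restriction is again int-amplified. Since $X_{y}$ is a smooth quadric of dimension $\ge 3$, \cite[Proposition~8]{PS89} gives a contradiction. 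This periodic-fibre trick is the missing idea.

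Second, two members of the classification escape your cases. The Mukai fourfold which is a double cover of $\mathbb{P}^{2}\times\mathbb{P}^{2}$ branched along a $(2,2)$-divisor has $\rho(X)=2$ and both elementary contractions go to $\mathbb{P}^{2}$ with general fibre a $2$-dimensional quadric surface; there is \emph{no} third extremal ray, so your claim that a $\mathbb{P}^{1}\times\mathbb{P}^{1}$ fibre ``yields a further conic bundle structure'' is false here, and \cite{JZ22} does not apply. The paper eliminates this case by a separate argument (Lemma~\ref{lem-double-cover}) showing that the branch divisor would have to be totally invariant under the descended endomorphism of $\mathbb{P}^{2}\times\mathbb{P}^{2}$, contradicting \cite[Theorem~1.1]{Men20}. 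Likewise, the complete intersection of two $(1,1)$-divisors in $\mathbb{P}^{r+1}\times\mathbb{P}^{r+1}$ (case~(3) of Lemma~\ref{lem-class-middleindex}, and Mukai case~(3) when $r=2$) has \emph{non-equidimensional} contractions onto $\mathbb{P}^{r+1}$ with $r+2$ jumping $\mathbb{P}^{r}$-fibres; it is not a projective bundle, so your splitting/indecomposability dichotomy for $\mathcal{E}$ does not apply, and both contractions are of the same type so you cannot ``revert to an earlier case''. The paper disposes of this family with a dedicated argument (Lemma~\ref{lem-middle-index-bidegree11}) tracking the images of the special fibres under the second contraction and using \cite[Corollary~1.2]{Zho21} on totally invariant divisors. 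Without analogues of these two lemmas your proof has genuine gaps.
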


Consequently, Theorem \ref{thm-middle-index}  together with Theorems \ref{rho=1-dim=4} and \ref{main-del-pezzo-non} gives a positive answer to Question \ref{main-ques-toric} for smooth Fano fourfolds of index $>1$, and for del Pezzo manifolds.
\begin{corollary}
Let $X$ be either a smooth Fano fourfold of index $>1$, or a del Pezzo manifold.
If $X$ admits an int-amplified endomorphism, then $X$ is toric. %; in particular, it is a rational variety. 
\end{corollary}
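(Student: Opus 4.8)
The plan is to deduce this corollary formally by combining Theorems \ref{rho=1-dim=4}, \ref{main-del-pezzo-non} and \ref{thm-middle-index}, distinguishing two cases according to the Picard number of $X$. So let $f\colon X\to X$ be an int-amplified endomorphism and split on whether $\rho(X)=1$ or $\rho(X)\ge 2$.

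If $\rho(X)\ge 2$, I would first check that $X$ meets the numerical hypothesis of Theorem \ref{thm-middle-index}. When $X$ is a Fano fourfold with $i(X)>1$ we have $i(X)\ge 2=\lfloor\tfrac{4+1}{2}\rfloor$; when $X$ is a del Pezzo manifold we have $i(X)=\dim(X)-1\ge\lfloor\tfrac{\dim(X)+1}{2}\rfloor$ for all $\dim(X)\ge 2$ (with equality exactly when $\dim(X)=2$). Hence Theorem \ref{thm-middle-index} applies directly and yields that $X$ is toric.

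If $\rho(X)=1$, then $\operatorname{Pic}(X)\cong\mathbb Z$ is generated by an ample divisor $H$, so $f^*H\sim qH$ for some positive integer $q$; since $f$ is int-amplified, $f^*H-H=(q-1)H$ is ample, forcing $q\ge 2$, whence $\deg f=q^{\dim X}>1$ and $f$ is non-isomorphic. Now if $X$ is a smooth Fano fourfold of index $>1$, Theorem \ref{rho=1-dim=4} tells us that the only such $X$ carrying a non-isomorphic surjective endomorphism is $\mathbb P^4$, which is toric; hence $X\cong\mathbb P^4$. If instead $X$ is a del Pezzo manifold of Picard number $1$, then Theorem \ref{main-del-pezzo-non} asserts that $X$ has no non-isomorphic surjective endomorphism, contradicting the existence of $f$, so this sub-case is vacuous. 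This exhausts all possibilities.

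I do not expect any genuine obstacle here: the whole substance of the statement is carried by the three theorems above, all established earlier in the paper. The only points deserving a line of care are the elementary index inequality needed to invoke Theorem \ref{thm-middle-index} in the case $\rho(X)\ge 2$, and the standard observation that on a variety of Picard number one an int-amplified endomorphism is automatically polarized, hence non-isomorphic, which is precisely what lets Theorems \ref{rho=1-dim=4} and \ref{main-del-pezzo-non} be brought to bear in the case $\rho(X)=1$.
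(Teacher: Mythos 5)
Your proposal is correct and follows exactly the route the paper intends: the corollary is stated as an immediate consequence of Theorems \ref{rho=1-dim=4}, \ref{main-del-pezzo-non} and \ref{thm-middle-index}, split by Picard number, with the standard observation that an int-amplified endomorphism on a variety of Picard number one is polarized and hence non-isomorphic. (Only an inessential slip: for a del Pezzo manifold the equality $\dim(X)-1=\lfloor\tfrac{\dim(X)+1}{2}\rfloor$ also holds when $\dim(X)=3$, not only when $\dim(X)=2$; the needed inequality is unaffected.)
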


Finally, in terms of higher dimensional Fano manifolds of small index, we refer readers to Section \ref{sec-index<=2} for some  partial solutions. 
Let us end up the introductory section by asking the following question.
We note that a positive answer to Question \ref{ques-fourfold-bounded} will complement Theorem \ref{rho=1-dim=4} and answer Question \ref{ques-main-bounded-fano} %(and hence Conjecture \ref{main-conj-pn}) 
completely in $\dim (X)=4$ (see Remark \ref{rem-index=1-dim=4}).

\begin{question}\label{ques-fourfold-bounded}
Let $X$ be a smooth Fano fourfold of Picard number 1 and of  index 1.
Suppose the variety of minimal rational tangents $\mathcal{C}_x$ along a general point $x\in X$ is 1-dimensional. 
For any smooth Fano fourfold $Y$ of Picard number 1, is there a positive number $N$ such that every surjective morphism  $Y\to X$  has the degree no more than $N$?
\end{question}

\subsubsection*{\textbf{\textup{Acknowledgments}}}
The authors would like to thank Professors  Cinzia Casagrande, Baohua Fu, Jaehyun Hong, Jun-Muk Hwang, Yongnam Lee,  Sheng Meng and  De-Qi Zhang for many valuable suggestions and  inspiring discussions.
The authors would also like to thank the referee for the careful reading and suggestions to improve the paper.
This work was supported by the Institute for Basic Science (IBS-R032-D1-2022-a00).

\section{Preliminary}
We use the following notation throughout this paper.
\begin{notation}\label{notation2.1}
Let $X$ be a projective variety.
%We identify a Cartier divisor $H$ on $X$ with its corresponding invertible sheaf $\mathcal{O}_X(H)$.
\begin{itemize}
\item We denote by $\sim$ (resp.~$\equiv$) 
the \textit{linear equivalence} (resp.~\textit{numerical equivalence}) on  (resp. $\mathbb{R}$-) Cartier divisors.

\item Let $\textup{NS}(X) = \textup{Pic}(X)/\textup{Pic}^\circ(X)$  be the \textit{N\'eron-Severi group} of $X$.
Let
 $\textup{N}^1(X):=\textup{NS}(X)\otimes_\mathbb{Z}\mathbb{R}$ be the space of $\mathbb{R}$-Cartier divisors modulo  numerical equivalence and $\rho(X) :=\dim_{\mathbb{R}}\textup{N}^1(X)$ the
\textit{Picard number} of $X$.

\item A smooth projective variety $X$ is said to be a \textit{Fano} manifold if the anti-canonical divisor $-K_X$ is ample.
Let $X$ be a Fano manifold.
The \textit{Fano index} (or simply \textit{index}) of $X$ is defined as the maximal integer $i(X)$ such that $-K_X\sim i(X)H$ for some integral ample divisor $H$. 
If $\rho(X)=1$, we say that $H$ is the \textit{fundamental divisor} of $X$, i.e., $H$ generates the Picard group $\textup{Pic}(X)$.

\item A Fano manifold of dimension $n$ is said to be \textit{del Pezzo} (resp. \textit{Mukai}), if its index $i(X)=n-1$ (resp. $i(X)=n-2$).
Let $X$ be a del Pezzo (resp. Mukai) manifold of dimension $n\ge 3$. 
We define the \textit{degree} $d=H^n$ (resp. \textit{genus} $g=\frac{1}{2}H^n+1$) where $H$ is the fundamental divisor of $X$.
It is known that $1\le d\le 7$ (resp.~$2\le g\le 12$ and  $g\neq 11$), and if $d\ge 3$ (resp. $g\ge 4$), then $H$ is very ample (cf.~e.g.~\cite[Corollary 2.4.7 and Remark 3.3.2]{IP99}).

\item Denote by $G:=\textup{Gr}(k,n)$ the Grassmannian of $(k-1)$-dimensional linear subspaces in $\mathbb{P}^{n-1}$.
Note that there is a natural Pl\"ucker embedding
$G\hookrightarrow \mathbb{P}(\wedge^k\mathbb{C}^n)=\mathbb{P}^{\binom{n}{k}-1}$.
Any Grassmannian is a rational homogeneous (and hence   Fano) manifold.

\item A normal variety $X$ of dimension $n$ is a \textit{toric variety} if $X$ contains a {\it big torus} $T=(\mathbb{C}^*)^n$ as an (affine) open dense subset such that the natural multiplication action of $T$ on itself extends to an action on the whole variety. In this case, let $B:=X\backslash T$, which is a divisor; the pair $(X,B)$ is said to be a \textit{toric pair}.

\item Let $X$ be a quadric hypersurface in $\mathbb{P}^n$, and $[x_0:\cdots:x_n]$  the homogeneous coordinates of $\mathbb{P}^n$.
After a  linear map,  $X$  is defined by $\sum_{i=0}^kx_i^2=0$ (cf.~\cite{Rei72}). 
If $k=n$, such $X$ is smooth and we denote $X$ by $Q^{n-1}$. 
If $k<n$, then $X$ is a cone over a smooth quadric of dimension $(k-1)$ with the singular locus  a linear subspace of dimension $(n-1-k)$, and we say that $k$ is the  \textit{rank} of $X$. 

\item Let $a_0,\cdots,a_n$ be positive integers.
Consider the graded algebra $\mathbb{C}[x_0,\cdots,x_n]$ where the grading is defined by assigning the weights $a_i$ to the variables $x_i$.
Then $\mathbb{P}=\mathbb{P}(a_0,\cdots,a_n):=\textup{Proj}\,\mathbb{C}[x_0,\cdots,x_n]$ is said to be a \textit{weighted projective space with respect to the weights $(a_0,\cdots,a_n)$}. 

\item Let $\mathbb{P}=\mathbb{P}(a_0,\cdots,a_n)$ be a weighted projective space. 
We say that $\mathbb{P}$ is  \textit{well-formed} if the greatest common divisor of any $n$ of the weights $a_i$ is 1. 
Note that any weighted projective space is isomorphic to a well-formed one.
A subvariety $X\subseteq\mathbb{P}$ is said to be \textit{well-formed} if $\mathbb{P}$ is well-formed and $\textup{codim}_X(X\cap\textup{Sing}\,\mathbb{P})\ge 2$ where $\textup{Sing}\,\mathbb{P}$ is the singular locus of $\mathbb{P}$.
It is known that a smooth well-formed weighted hypersurface $X$ is disjoint with
$\textup{Sing}\,\mathbb{P}$. % (cf.~\cite[Proposition 2.11]{PS20}). 
Moreover, if $a_0,...,a_n$ are mutually coprime, then $\mathbb{P}$ has only isolated singularities and $\mathcal{O}_{\mathbb{P}}(1)$ is locally free outside $\textup{Sing}\,\mathbb{P}$ (cf. \cite[Proposition 5.1, Lemma 5.3, Definition 5.4 and Proposition 5.5]{BR86}).

\item Let $X$ be a Fano manifold of $\rho(X)=1$. 
A smooth rational curve $\ell$ on $X$ is said to be a \textit{line} (resp.\,a conic), if $H\cdot\ell=1$ (resp. $H\cdot\ell=2$) holds where $H$ is the fundamental divisor of $X$.
If  $H$ is very ample, then such a line in $X$ is an actual line in the projective space.

\item Let $X$ be a uniruled projective manifold and RatCurves$^n (X)$ the normalized space of rational curves on $X$ (cf.~\cite[Chapter II]{Kol96}). 
An irreducible component $\mathcal{K}$ of RatCurves$^n (X)$ is called a \textit{family of rational curves}. 
Let $e:\mathcal{U}\to X$ be the normalization of the universal family over $\mathcal{K}$ and let $q: \mathcal{U}\to \mathcal{K}$ be the natural $\mathbb{P}^1$-fibration. 
We denote by $\mathcal{K}_x$ the normalization of the set $q(e^{-1}(x))$ for some point $x\in e(\mathcal{U})$. 
The family $\mathcal{K}$ is said to be \textit{dominating} if $e$ is dominant and is said to be \textit{locally unsplit} if $\mathcal{K}_x$ is proper for a general $x\in e(U)$.  

Every uniruled projective manifold $X$  carries a locally unsplit dominating family $\mathcal{K}$ of minimal rational curves. 
A general element $[\ell ]\in \mathcal{K}$ is a \textit{standard} rational curve, i.e., the pullback of $T_X$ to the normalization  $f:\mathbb{P}^1\to \ell$ splits as follows:
$$
f^*T_X\cong\mathcal{O}_{\mathbb{P}^1}(2)\oplus \mathcal{O}_{\mathbb{P}^1}(1)^{\oplus p}\oplus \mathcal{O}_{\mathbb{P}^1}^{\oplus(n-p-1)}
$$
where $n=\dim(X)$ and $p=\dim(\mathcal{K}_x)$ for a general point $x\in X$.

Associated with a locally unsplit dominating family $\mathcal{K}$ of minimal rational curves and a general point $x\in X$, there is an important geometric object defined as the closure of the image of the tangent map $\tau_x :\mathcal{K}_x\dashrightarrow \mathbb{P}(\Omega_{X,x})$ sending a rational curve that is smooth at $x$ to its tangent direction. 
This is called the \textit{variety of minimal rational tangents} (VMRT for short) at the point $x$ associated to the family $\mathcal{K}$ and we denote it by $\mathcal{C}_x$. The tangent map $\tau_x:\mathcal{K}_x\dashrightarrow \mathcal{C}_x$ is in fact the normalization morphism (cf.~\cite{Keb02} and \cite{HM04})  and hence the dimension of $\mathcal{C}_x$ is equal to the dimension of $\mathcal{K}_x$, which is exactly $p$.

\item Let $f:X\to X$ be a non-isomorphic surjective endomorphism. 
A closed subset $Y\subseteq X$ is said to be \textit{$f^{-1}$-invariant} or \textit{$f$-totally invariant} if $f^{-1}(Y)=Y$.

\item A surjective endomorphism $f:X\to X$ is \textit{$q$-polarized} if $f^*L\sim qL$ for some ample Cartier divisor $L$ and integer $q>1$ (cf.~\cite{MZ18}). 
A surjective endomorphism $f:X\to X$ is \textit{int-amplified} if $f^*L - L$ is ample for some ample Cartier divisor $L$; see \cite{Men20}.
Note that every polarized endomorphism is int-amplified. 
\end{itemize}
\end{notation}

In the following, we recall several results to be used in the subsequent sections.  
First, we slightly extend \cite[Lemma 1.1]{ARV99} to the lemma below, where $\Omega_X(H)$ here is allowed to be globally generated away from finitely many points.

\begin{lemma}[{cf.~\cite[Lemma 1.1 and Corollary 1.2]{ARV99}}]\label{lem-arv-ggg}
Let $f:Y\to X$ be a finite surjective morphism between smooth projective varieties of dimension $n$.
Let $H$ be a line bundle on $X$ such that $\Omega_X(H)$ is globally generated outside finitely many points.
Then for a general section  $s\in H^0(X,\Omega_X(H))$, the induced section $t_s\in H^0(Y,\Omega_Y(f^*H))$ has only isolated zeros.
In particular, $\deg(f)c_n(\Omega_X(H))\le c_n(\Omega_Y(f^*H))$.
\end{lemma}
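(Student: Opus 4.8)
The plan is to run the argument of \cite[Lemma 1.1, Corollary 1.2]{ARV99}, the only new point being that the finite set where $\Omega_X(H)$ fails to be globally generated is $0$-dimensional and therefore invisible to the dimension counts and the (localized) top Chern number computations below. Fix a finite subset $\Sigma\subset X$ with $\Omega_X(H)$ globally generated on $X\setminus\Sigma$, write $V:=H^0(X,\Omega_X(H))$, and let $\phi\colon f^*\Omega_X\to\Omega_Y$ be the cotangent map. Twisting by $f^*H$ and pulling back sections produces the linear map $s\mapsto t_s:=\phi(f^*s)$ from $V$ to $H^0(Y,\Omega_Y(f^*H))$; since $f$ is finite and we work over $\mathbb{C}$, $f$ is generically \'etale and $\phi$ is injective as a sheaf homomorphism, so $t_s\ne 0$ whenever $s\ne 0$.

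The key step — and the one I expect to be the main obstacle — is a rank estimate for the cotangent map: for $0\le j\le n$ the degeneracy locus $D_j:=\{y\in Y:\operatorname{rank}\phi_y\le j\}$ (for $j=n-1$ this is the ramification locus) satisfies $\dim D_j\le j$. I would argue by contradiction: if an irreducible $Z\subseteq D_j$ had $\dim Z\ge j+1$, then at a general point $y\in Z$ the differential of $f|_Z$ equals $df_y|_{T_yZ}$ and has rank $\le\operatorname{rank}\phi_y\le j<\dim Z$; by generic smoothness in characteristic $0$ this forces $\dim \overline{f(Z)}\le j<\dim Z$, contradicting the fact that $f$ finite makes $f|_Z$ finite onto its image. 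This is precisely where finiteness (rather than mere generic finiteness) of $f$ enters.

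Next I would show that $t_s$ has only isolated zeros for general $s$, by a standard incidence-variety count. Set $I:=\{(s,y)\in V\times Y: t_s(y)=0\}$. For $y$ with $x:=f(y)\notin\Sigma$, global generation makes the evaluations $V\to\Omega_X(H)\otimes k(x)$ and hence $V\to f^*\Omega_X(f^*H)\otimes k(y)$ surjective; as $t_s(y)=\phi_y\big((f^*s)(y)\big)$ vanishes exactly when $(f^*s)(y)\in\ker\phi_y$, the fibre of $I$ over such $y$ has codimension $\operatorname{rank}\phi_y$ in $V$. Stratifying $Y\setminus f^{-1}(\Sigma)$ by $Y_k:=\{y:\operatorname{rank}\phi_y=k\}$, which has $\dim Y_k\le k$ by the rank estimate, one gets $\dim\big(I\cap(V\times Y_k)\big)\le k+(\dim V-k)=\dim V$, while the finite set $f^{-1}(\Sigma)$ also contributes at most $\dim V$; summing over the finitely many strata gives $\dim I\le\dim V$. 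Hence $I\to V$ is either non-dominant or has $0$-dimensional general fibre, so $Z(t_s):=\{t_s=0\}$ is finite for general $s$, i.e.\ $t_s$ has isolated zeros. The same count applied to $\{(s,x)\in V\times X:s(x)=0\}$ shows that $Z(s)$ is finite for general $s$.

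Finally, for the Chern inequality I would choose a general $s$ with $Z(s)$ and $Z(t_s)$ both $0$-dimensional. Then $c_n(\Omega_X(H))=\operatorname{length}Z(s)$ and $c_n(\Omega_Y(f^*H))=\operatorname{length}Z(t_s)$, since a section of a rank-$n$ bundle on a smooth projective $n$-fold with $0$-dimensional zero scheme computes the top Chern number. Because $t_s=\phi(f^*s)$, in any local trivialization the components of $t_s$ lie in the ideal generated by those of $f^*s$, so $Z(t_s)\supseteq Z(f^*s)=f^{-1}(Z(s))$ scheme-theoretically; and $f$ being finite flat of degree $\deg(f)$ yields $\operatorname{length}f^{-1}(Z(s))=\deg(f)\cdot\operatorname{length}Z(s)$. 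Combining these, $c_n(\Omega_Y(f^*H))=\operatorname{length}Z(t_s)\ge\operatorname{length}f^{-1}(Z(s))=\deg(f)\,c_n(\Omega_X(H))$, as claimed; the only departure from \cite{ARV99} is the bookkeeping around $\Sigma$ and $f^{-1}(\Sigma)$, which is harmless since both are finite.
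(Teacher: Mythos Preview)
Your proof is correct and shares the paper's overall architecture---the rank estimate $\dim D_j\le j$, the isolated-zeros claim, and the Chern-number comparison via $Z(t_s)\supseteq f^{-1}(Z(s))$---but the middle step is executed differently. The paper argues by contradiction: assuming that for general $s$ the zero set $Z(t_s)$ contains a curve $C_s$, it picks the least $k$ with $C_s\subseteq R_k$ (the paper's notation for your $D_k$), forms on $R_k$ the quotient $\mathcal{A}_k$ of $f^*\Omega_X(H)|_{R_k}$ by the kernel of the cotangent map, and notes that the induced section $s_{R_k}\in H^0(R_k,\mathcal{A}_k)$ vanishes along $C_s$; since $\mathcal{A}_k$ has generic rank $k$, is generated by these sections outside finitely many points, and $\dim R_k\le k$, the zero locus of a generic $s_{R_k}$ is $0$-dimensional (via the expected-codimension lemma for zero loci), forcing $C_s\subseteq R_{k-1}$, a contradiction. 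Your incidence-variety dimension count packages the same stratified information more directly and avoids appealing to a separate degeneracy-locus lemma, at the cost of needing the surjectivity of evaluation $V\to\Omega_X(H)\otimes k(x)$ on each stratum; the paper's route stays closer to the original argument in \cite{ARV99}. Either way, the finite bad set $\Sigma$ is harmless for exactly the reason you give, and the deduction of the Chern inequality is the same in both proofs.
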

\begin{proof}
Let 
$R_i:=\{\textup{closed points }y\in Y~|~\textup{rank }df_y\le i\}$. 
Since $f$ is a finite morphism, by \cite[Chapter III, Proposition 10.6]{Har77}, we have $\dim(R_i)\le i$. 
Under the natural map
$$f^*(\Omega_X(H))\xrightarrow{\tau} \Omega_Y(f^*H)\to \Omega_{Y/X}(f^*H)\to 0,$$
every  $s\in H^0(X,\Omega_X(H))$ gives rise to an induced section $t_s:=\tau(f^*s)\in H^0(Y,\Omega_Y(f^*H))$.
We note that the above sequence is also left exact, since $\tau$ is generically surjective and the kernel $\ker(\tau)$ is thus  a torsion sheaf and has to be zero.

Suppose for a general $s\in H^0(X,\Omega_X(H))$, the induced $t_s$ vanishes along some curve $C_s$.
There exists one $k$ such that for a general $s$, the curve $C_s\subseteq R_k$ but $C_s\not\subseteq R_{k-1}$.
With $R_k$ replaced by an irreducible component, we may  assume that $R_k$ is irreducible.
Let 
$$\mathcal{B}_k=\ker(f^*(\Omega_X(H))|_{R_k}\to \Omega_Y(f^*H)|_{R_k})$$
and let 
$\mathcal{A}_k:=f^*(\Omega_X(H))|_{R_k}/\mathcal{B}_k$ 
be its quotient.
Then each $s$ induces $s_{R_k}\in H^0(R_k, \mathcal{A}_k)$ via the quotient and $s_{R_k}$ vanishes along $C_s$.
By the choice of $R_k$, the sheaf $\mathcal{A}_k$ is locally free of rank $k$ outside $R_{k-1}$ and generated by global sections $s_{R_k}$ outside finitely many points.
By considering the degeneracy locus of the generic section  (cf.~e.g.~\cite[Lemma 5.2]{EH16}) and the unique extension of a reflexive sheaf, 
the zero locus of a generic section $s_{R_k}$ has codimension $k$ in  $R_k\backslash R_{k-1}$; in particular, we have $C_s\subseteq R_{k-1}$, a contradiction to our assumption.
The first part is proved.

For the second part, since $c_n(f^*(\Omega_X(H)))=\deg(f)c_n(\Omega_X(H))$, we only need to show $c_n(f^*(\Omega_X(H)))\le c_n(\Omega_Y(f^*H))$. 
However, this follows from the first part by noting that the induced section $t_s$ has no less (isolated) zeros than those of $f^*(s)$. % the top Chern class coincides with the Euler class of the underlying real vector bundle, which is the number of zeros of a section intersecting  the zero section transversally.
\end{proof}

As an application of Lemma \ref{lem-arv-ggg},
we prove the following lemma, which will be crucially used in the next section.
\begin{lemma}\label{lem-arv-prop2.1}
Let $X$ and $Y$ be smooth projective varieties of dimension $n$ and of Picard number 1.
Let $H_X$ (resp. $H_Y$) be the ample generator of $\textup{N}^1(X)$ (resp. $\textup{N}^1(Y)$).
Suppose that $\Omega_X(uH_X)$ is globally generated outside finitely many points for some $u>0$, and $c_n(\Omega_X(uH_X))-u^nH_X^n>0$. 
Then there is a positive number $N$ determined by $X$ and $Y$ such that every finite surjective morphism $f:Y\to X$ has the degree no more than $N$.
\end{lemma}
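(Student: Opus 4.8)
The plan is to combine Lemma \ref{lem-arv-ggg} with a numerical estimate relating the Chern classes on $Y$ to those on $X$ via the pullback $f^*$. First I would set $d:=\deg(f)$ and, using that both $X$ and $Y$ have Picard number $1$, write $f^*H_X\sim rH_Y$ for the unique positive integer $r$; comparing top self-intersections gives $d\cdot H_X^n = r^n H_Y^n$, so that $r$ is determined by $d$ (and the fixed discrete data $H_X^n$, $H_Y^n$). The point of introducing $r$ is that $\Omega_Y(f^*H_X)=\Omega_Y(rH_Y)$, so the right-hand side of the inequality in Lemma \ref{lem-arv-ggg} becomes a quantity computed purely on $Y$ with the twist $rH_Y$.

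Next I would expand both sides of $d\cdot c_n(\Omega_X(uH_X)) \le c_n(\Omega_Y(f^*(uH_X))) = c_n(\Omega_Y(ruH_Y))$ as polynomials in the twisting parameter. Writing $c_n(\Omega_X(uH_X)) = \sum_{i=0}^{n} \binom{?}{?}\, c_i(\Omega_X)\cdot (uH_X)^{n-i}$ — more precisely using the standard identity $c_n(\mathcal{E}\otimes L) = \sum_{i=0}^n c_i(\mathcal{E})\, c_1(L)^{n-i}\binom{n-i}{\ }$ type formula, the leading term in $u$ is $u^n H_X^n$ and the remaining terms are bounded by a constant depending only on $X$. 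Similarly $c_n(\Omega_Y(ruH_Y))$ is a polynomial in $ru$ of degree $n$ with leading coefficient $H_Y^n$ and lower-order coefficients depending only on $Y$. Substituting $r^n H_Y^n = d\, H_X^n$, the leading term on the right is $(ru)^n H_Y^n = u^n \cdot d\, H_X^n$, which exactly cancels against $d\cdot u^n H_X^n$ coming from the leading term on the left after we move things around. Thus the inequality $d\cdot c_n(\Omega_X(uH_X)) \le c_n(\Omega_Y(ruH_Y))$ rearranges to
\[
d\bigl(c_n(\Omega_X(uH_X)) - u^n H_X^n\bigr) \le c_n(\Omega_Y(ruH_Y)) - (ru)^n H_Y^n =: P_Y(ru),
\]
where $P_Y$ is a polynomial in $ru$ of degree at most $n-1$ whose coefficients depend only on $Y$ and $u$.

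Now the hypothesis $c_n(\Omega_X(uH_X)) - u^n H_X^n > 0$ makes the left-hand side a fixed positive constant times $d$. On the right, since $P_Y$ has degree $\le n-1$ in $ru$ and $r = (d\, H_X^n/H_Y^n)^{1/n}$ grows like $d^{1/n}$, the right-hand side grows at most like $d^{(n-1)/n}$ in $d$. Hence a bound of the form $\,\mathrm{const}(X)\cdot d \le \mathrm{const}(Y,u)\cdot (1 + d^{(n-1)/n})\,$ forces $d \le N$ for some $N$ depending only on $X$, $Y$ (and the chosen $u$, which itself is part of the data of $X$). I would make the estimate on $P_Y(ru)$ explicit enough to extract $N$, e.g. by bounding each coefficient of $P_Y$ in absolute value by a constant and using $ru \le C d^{1/n}$. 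The main obstacle I anticipate is bookkeeping with the Chern-class-of-a-twist formula and verifying that the genuine leading terms cancel so that the surviving polynomial in $d$ on the right really has degree strictly less than $1$; once that cancellation is confirmed the boundedness of $d$ is immediate, and Lemma \ref{lem-arv-ggg} supplies precisely the inequality needed to start. A minor subtlety is ensuring $\Omega_Y(rH_Y)$ is handled correctly — we do not need global generation on $Y$, only the numerical inequality from Lemma \ref{lem-arv-ggg}, which already incorporates it on the $X$ side.
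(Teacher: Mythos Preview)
Your proposal is correct and follows essentially the same approach as the paper: apply Lemma~\ref{lem-arv-ggg}, expand both sides via the Chern-class-of-a-twist formula, observe that the top-degree terms cancel (using $r^nH_Y^n=d\,H_X^n$), and conclude boundedness from the hypothesis that the remaining quantity on the $X$-side is strictly positive. The paper phrases the last step as a contradiction (letting the pullback multiplier $m\to\infty$) rather than your explicit growth-rate comparison $d$ versus $d^{(n-1)/n}$, but this is only a cosmetic difference.
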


\begin{proof}
Write $f^*H_X\equiv mH_Y$.
Since $\Omega_X(uH_X)$ is globally generated away from finitely many points, it follows from Lemma \ref{lem-arv-ggg} that
$$m^nH_Y^nc_n(\Omega_X(uH_X))=\deg(f)H_X^nc_n(\Omega_X(uH_X))\le H_X^nc_n(\Omega_Y(umH_Y)).$$
Therefore, we have
$$m^nH_Y^n\sum_{i=0}^nc_{n-i}(\Omega_X)(uH_X)^i\le H_X^n \sum_{i=0}^nc_{n-i}(\Omega_Y)(umH_Y)^i.$$
Cancelling the last summand, we have
$$m^nH_Y^n\sum_{i=0}^{n-1}c_{n-i}(\Omega_X)(uH_X)^i\le H_X^n \sum_{i=0}^{n-1}c_{n-i}(\Omega_Y)(umH_Y)^i.$$
If $m$ is unbounded, by letting $m\to\infty$, we have
$$c_n(\Omega_X(uH_X))-u^nH_X^n=\sum_{i=0}^{n-1}c_{n-i}(\Omega_X)(uH_X)^i\le0,$$
which leads to a contradiction to our assumption. 
This finishes the proof of our lemma.
\end{proof}

When proving Theorems \ref{rho=1-dim=4} $\sim$ \ref{thm-middle-index} and their corollaries, we  use the classification of del Pezzo manifolds more than once.
For the convenience of the reader, we recall it here.
\begin{theorem}[{cf.~e.g.~\cite[Theorem 3.3.1]{IP99}}]\label{class-del-pezzo}
Let $(X,H)$ be an $n$-dimensional del Pezzo manifold of degree $d=H^n$.
Assume that $n\ge 3$.
Then $X$ is one of the following.
\begin{itemize}
\item $d=1$ and $X$ is a smooth weighted hypersurface of degree 6 in $\mathbb{P}(3,2,1^{\oplus n})$.
\item $d=2$ and $X$ is a smooth weighted hypersurface of degree 4 in $\mathbb{P}(2,1^{\oplus (n+1)})$.
\item $d=3$ and $X$ is a smooth cubic hypersurface in $\mathbb{P}^{n+1}$.
\item $d=4$ and $X$ is a smooth complete intersection of two smooth quadrics in $\mathbb{P}^{n+2}$.
\item $d=5$, $n\le 6$ and $X\subseteq\mathbb{P}^{n+3}$ is a linear section of the Grassmannian $\textup{Gr}(2,5)\subseteq\mathbb{P}^9$ under the Pl\"ucker embedding.
\item $d=6$, $n\le 4$ and $X$ is $\mathbb{P}^2\times\mathbb{P}^2$, $\mathbb{P}(T_{\mathbb{P}^2})$ or $\mathbb{P}^1\times\mathbb{P}^1\times\mathbb{P}^1$.
\item $d=7$ and $X$ is a blow-up of a point on $\mathbb{P}^3$.
\end{itemize}
Moreover, the Picard number $\rho(X)=1$ if and only if $d\le 5$, and the fundamental divisor $H$ is very ample if and only if $d\ge 3$.
\end{theorem}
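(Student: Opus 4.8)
The plan is to follow Fujita's classification of polarized manifolds of $\Delta$-genus one, specialized to the del Pezzo case; I sketch it here. Write $(X,H)$ for the del Pezzo $n$-fold with $-K_X\sim(n-1)H$, $H$ ample, and put $d=H^n$. The first step is to build a \emph{ladder}: choose general members $X=X_n\supset X_{n-1}\supset\cdots\supset X_1$ with $X_{j-1}\in\lvert H|_{X_j}\rvert$, and show (Bertini together with base-point-freeness of $\lvert H\rvert$ when $d\ge 2$, plus a small extra argument when $d=1$) that each $X_j$ is smooth of dimension $j$. By adjunction $-K_{X_j}\sim(-K_X-(n-j)H)|_{X_j}\sim(j-1)H|_{X_j}$, so each $(X_j,H|_{X_j})$ is again del Pezzo of the \emph{same} degree $d$; the bottom rung $(X_1,H|_{X_1})$ has $K_{X_1}=0$, hence $X_1$ is an elliptic curve of degree $d$ spanning $\mathbb{P}^{d-1}$. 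Using the exact sequences $0\to\mathcal{O}_{X_j}\to\mathcal{O}_{X_j}(H)\to\mathcal{O}_{X_{j-1}}(H)\to 0$ and Kodaira vanishing on the Fano rungs, one gets $h^0(X,H)=n+d-1$ and $h^i(X,mH)=0$ for $i>0$, $m\ge 0$; in particular the $\Delta$-genus $\Delta(X,H)=n+d-h^0(X,H)=1$ and the sectional genus is $1$.

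The main dichotomy is whether $H$ is very ample. When $d\ge 3$, the elliptic curve section is projectively normal, so $H$ is very ample and climbing the ladder embeds $X$ into $\mathbb{P}^{n+d-2}$ as a smooth non-degenerate variety of degree $d=(\text{codimension})+2$. Such varieties of almost minimal degree are classically classified (del Pezzo, Swinnerton-Dyer, Fujita; see also \cite{IP99}): discarding the cones (which are singular), the only options are a cubic hypersurface for $d=3$, a complete intersection of two quadrics for $d=4$, a linear section of $\textup{Gr}(2,5)\subseteq\mathbb{P}^9$ for $d=5$, exactly $\mathbb{P}^2\times\mathbb{P}^2$, $\mathbb{P}(T_{\mathbb{P}^2})$, $\mathbb{P}^1\times\mathbb{P}^1\times\mathbb{P}^1$ for $d=6$, and the blow-up of $\mathbb{P}^3$ at a point for $d=7$; the degree cannot exceed $7$, since $d\ge 8$ together with $\Delta=1$ forces $X\cong\mathbb{P}^3$ or a smooth quadric, whose index is strictly larger than $n-1$. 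The bound $n\le 6$ for $d=5$ is the dimension of $\textup{Gr}(2,5)$, and $n\le 4$ for $d=6$ comes from the fact that $\mathbb{P}^2\times\mathbb{P}^2$ and its companions admit no further smooth ladder rung.

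For $d\le 2$ one instead studies the section ring $R(X,H)=\bigoplus_{m\ge0}H^0(X,mH)$ directly, using the vanishings from the first step. When $d=2$, $h^0(H)=n+1$ and $\varphi_{\lvert H\rvert}\colon X\to\mathbb{P}^n$ is a finite double cover branched along a quartic, realizing $X$ as the smooth degree-$4$ weighted hypersurface in $\mathbb{P}(2,1^{\oplus(n+1)})$; when $d=1$, $\lvert H\rvert$ has a single base point, one new ring generator appears in degree $2$ and one in degree $3$, and $R(X,H)$ is generated in degrees $\le 3$ with a single relation in degree $6$, giving the degree-$6$ hypersurface in $\mathbb{P}(3,2,1^{\oplus n})$. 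Finally, $\rho(X)=1$ for $d\le 5$ follows from the Lefschetz hyperplane theorem along the ladder, since each of those models is a (weighted) complete intersection inside a space of Picard number $1$, whereas the $d=6,7$ models manifestly have $\rho\ge 2$; and "$H$ very ample $\iff d\ge 3$" is precisely the split between the second and third steps.

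The genuinely hard input is the second step: proving $H$ very ample (equivalently, projective normality of the elliptic curve section) and then invoking the classification of smooth varieties of degree equal to codimension plus two — i.e., Fujita's structure theory for $\Delta$-genus one — to pin down the list. Constructing the ladder with smooth rungs is also a point that needs care (the very ampleness of $H$ for $d\ge 3$ feeds back into it). By contrast, the explicit weighted-hypersurface descriptions in the cases $d=1,2$ are technically fussy but routine once the cohomology vanishing of the first step is available. Since this is a classical theorem, in the paper one simply cites \cite[Theorem 3.3.1]{IP99}.
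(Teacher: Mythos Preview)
The paper gives no proof of this statement at all; it is quoted as a classical result with a bare citation to \cite[Theorem~3.3.1]{IP99}, exactly as you note in your final sentence. Your sketch via Fujita's ladder and $\Delta$-genus-one theory is a faithful outline of the standard argument behind that citation, so there is nothing to compare.
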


Now we study lines on del Pezzo manifolds  and Mukai fourfolds. 
It is well-known that any del Pezzo manifold of Picard number 1 and of dimension $\ge 3$  is covered by lines.

\begin{lemma}\label{lem-lines-del}
Let $X$ be a del Pezzo manifold of Picard number 1. 
Suppose $X$ is of degree $d\ge 3$ and of dimension $n\ge 3$. Then given a line $\ell\subseteq X$, one of the following holds.
\begin{itemize}
\item[(i)] The normal bundle $\mathcal{N}_{\ell/X}\cong\mathcal{O}_{\ell}^{\oplus 2}\oplus \mathcal{O}_{\ell}(1)^{\oplus (n-3)}$ (the first kind); or 
\item[(ii)] The normal bundle $\mathcal{N}_{\ell/X}\cong\mathcal{O}_{\ell}(-1)\oplus \mathcal{O}_{\ell}(1)^{\oplus (n-2)}$ (the second kind).
\end{itemize} 
\end{lemma}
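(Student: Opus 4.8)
The plan is to determine the splitting type of $\mathcal{N}_{\ell/X}$ on $\ell\cong\mathbb{P}^1$ purely from two pieces of data: its degree, and an upper bound on the degrees of its Grothendieck summands. Write $\mathcal{N}_{\ell/X}\cong\bigoplus_{i=1}^{n-1}\mathcal{O}_\ell(a_i)$ with $a_1\ge\cdots\ge a_{n-1}$, using Grothendieck's splitting theorem and the fact that $\mathcal{N}_{\ell/X}$ is locally free of rank $n-1$ since $\ell$ is a smooth curve in the smooth variety $X$. Because $-K_X\sim(n-1)H$ and $H\cdot\ell=1$, adjunction for the rational curve $\ell$ gives
$\deg\det\mathcal{N}_{\ell/X}=(-K_X\cdot\ell)+\deg\omega_\ell=(n-1)-2=n-3$,
so $\sum_{i=1}^{n-1}a_i=n-3$.

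Next I would establish the bound $a_i\le 1$ for all $i$. Since $d\ge 3$, the fundamental divisor $H$ is very ample (Theorem \ref{class-del-pezzo}), hence it embeds $X\hookrightarrow\mathbb{P}^N$ with $N=\dim H^0(X,H)-1$, and under this embedding $\ell$ is an honest line in $\mathbb{P}^N$ (see the corresponding item in Notation \ref{notation2.1}). From the subbundle inclusions $T_\ell\subseteq T_X|_\ell\subseteq T_{\mathbb{P}^N}|_\ell$ one gets, by a standard diagram chase, an injection of sheaves $\mathcal{N}_{\ell/X}\hookrightarrow\mathcal{N}_{\ell/\mathbb{P}^N}\cong\mathcal{O}_\ell(1)^{\oplus(N-1)}$. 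Twisting by $\mathcal{O}_\ell(-2)$ and taking global sections yields $H^0(\ell,\mathcal{N}_{\ell/X}(-2))\hookrightarrow H^0(\ell,\mathcal{O}_\ell(-1)^{\oplus(N-1)})=0$, so no summand $\mathcal{O}_\ell(a_i)$ can satisfy $a_i\ge 2$; that is, $a_i\le 1$ for every $i$.

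Finally a one-line combinatorial observation finishes the proof: putting $b_i:=1-a_i\ge 0$, we have $\sum_{i=1}^{n-1}b_i=(n-1)-(n-3)=2$, and since $n-1\ge 2$ the only nonnegative integer solutions are, up to reordering, $(b_i)=(2,0,\dots,0)$ or $(b_i)=(1,1,0,\dots,0)$. The first corresponds to $\mathcal{N}_{\ell/X}\cong\mathcal{O}_\ell(-1)\oplus\mathcal{O}_\ell(1)^{\oplus(n-2)}$, i.e.\ case (ii); the second to $\mathcal{N}_{\ell/X}\cong\mathcal{O}_\ell^{\oplus 2}\oplus\mathcal{O}_\ell(1)^{\oplus(n-3)}$, i.e.\ case (i). (When $n=3$ one of the $\mathcal{O}_\ell(1)$ blocks is empty, which is precisely why $n\ge 3$ makes the list exhaustive.) There is no deep obstacle in this argument; the only step that genuinely uses a hypothesis is the bound $a_i\le 1$, where very ampleness ($d\ge 3$) enters, and it is exactly this bound that rules out all "unbalanced" splittings and collapses the classification to the two stated cases.
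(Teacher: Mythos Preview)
Your proof is correct and follows essentially the same approach as the paper: both compute $\sum a_i=n-3$ from the tangent sequence (equivalently adjunction), use very ampleness for $d\ge 3$ to realize $\ell$ as an honest line in $\mathbb{P}^N$, obtain the injection $\mathcal{N}_{\ell/X}\hookrightarrow\mathcal{N}_{\ell/\mathbb{P}^N}\cong\mathcal{O}_\ell(1)^{\oplus(N-1)}$ to force $a_i\le 1$, and then read off the two possible splittings. The only cosmetic difference is that the paper phrases the bound $a_i\le 1$ via the relative normal bundle sequence $0\to\mathcal{N}_{\ell/X}\to\mathcal{N}_{\ell/\mathbb{P}^N}\to\mathcal{N}_{X/\mathbb{P}^N}|_\ell\to 0$, whereas you extract it by twisting the same injection by $\mathcal{O}_\ell(-2)$ and taking global sections; these are equivalent.
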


\begin{proof}
In our setting, the ample generator of $\textup{Pic}(X)$ is very ample (cf.~Theorem \ref{class-del-pezzo}) %and hence it gives an embedding $X\hookrightarrow \mathbb{P}^N$ 
and hence it defines a closed embedding $X\hookrightarrow \mathbb{P}^N$. 
On the one hand, from the standard Euler sequence for a line $\ell$ on $X$, we have
$$0\to \mathcal{O}_\ell(2)\to T_X|_\ell\to \mathcal{N}_{\ell/X}\to 0.$$
By the splittingness of vector bundles over $\ell\cong\mathbb{P}^1$, we may assume that
$$\mathcal{N}_{\ell/X}=\mathcal{O}_\ell(a_1)\oplus\cdots\oplus\mathcal{O}_\ell(a_{n-1})$$
with $a_1\ge \cdots\ge a_{n-1}$ and $\sum_{i=1}^{n-1} a_i=n-3$.
On the other hand, from the relative normal bundle sequence
$$0\to\mathcal{N}_{\ell/X}\to\mathcal{N}_{\ell/\mathbb{P}^N}\to\mathcal{N}_{X/\mathbb{P}^N}\to 0,$$
each $a_i\le 1$.
In particular, there are only two possibilities, either $\mathcal{N}_{\ell/X}\cong\mathcal{O}_{\ell}^{\oplus 2}\oplus \mathcal{O}_{\ell}(1)^{\oplus n-3}$ or $\mathcal{N}_{\ell/X}\cong\mathcal{O}_{\ell}(-1)\oplus \mathcal{O}_{\ell}(1)^{\oplus n-2}$. 
\end{proof}

\begin{lemma}\label{lem-mukai-fourfold}
Let $X$ be a Mukai fourfold of Picard number 1.
Then $X$ is covered by lines. 
\end{lemma}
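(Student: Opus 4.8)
The plan is to pin down the minimal rational curves through a general point of $X$ and show that they must be lines. Since $X$ is Fano it is uniruled, hence it carries a dominating family $\mathcal{K}$ of minimal rational curves in the sense of Notation \ref{notation2.1}. Let $[\ell]\in\mathcal{K}$ be a standard rational curve through a general point $x\in X$, with normalization $f\colon\mathbb{P}^1\to\ell$, so that
$$f^*T_X\cong\mathcal{O}_{\mathbb{P}^1}(2)\oplus\mathcal{O}_{\mathbb{P}^1}(1)^{\oplus p}\oplus\mathcal{O}_{\mathbb{P}^1}^{\oplus(3-p)},\qquad p=\dim\mathcal{C}_x\le 3,$$
and in particular $-K_X\cdot\ell=p+2$. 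As $X$ is a Mukai manifold of Picard number $1$, we have $-K_X\sim 2H$ for the fundamental divisor $H$, so $-K_X\cdot\ell=2(H\cdot\ell)$ is a positive even integer which is at most $5$; hence $-K_X\cdot\ell\in\{2,4\}$, i.e.\ either $H\cdot\ell=1$ and $p=0$, or $H\cdot\ell=2$ and $p=2$.

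If $H\cdot\ell=1$, then $\mathcal{K}$ is a dominating family of lines. The locus of points of $X$ lying on a line is closed (it is the image in $X$ of the universal family over the closure of the relevant component of the Hilbert/Chow scheme of lines, which is proper) and it contains the dense subset $e(\mathcal{U})$, hence it equals $X$; that is, $X$ is covered by lines, as desired. So the whole point is to rule out the case $H\cdot\ell=2$, $p=2$.

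In that remaining case $\mathcal{C}_x\subseteq\mathbb{P}(\Omega_{X,x})\cong\mathbb{P}^3$ is a surface, i.e.\ a hypersurface in $\mathbb{P}^3$, and moreover $-K_X\cdot\ell=4=\dim X$. I would then invoke the VMRT/numerical characterization of the smooth quadric (Cho--Miyaoka--Shepherd-Barron, Dedieu--H\"oring): a Fano $n$-fold of Picard number $1$ whose minimal rational curves have anticanonical degree $n$, equivalently whose general VMRT $\mathcal{C}_x$ is an $(n-2)$-dimensional smooth quadric, is isomorphic to $Q^n$. This forces $X\cong Q^4$, which is impossible since $i(Q^4)=4\ne 2=i(X)$. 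Hence $H\cdot\ell=1$ and the proof concludes as above.

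The main obstacle is the last step: the cleanest quadric-characterization statements in the literature require the general VMRT to be a \emph{smooth} quadric of dimension $n-2$, whereas a priori we only know $\mathcal{C}_x\subseteq\mathbb{P}^3$ is some hypersurface of dimension $2$. I would therefore need to first verify that $\mathcal{C}_x$ is reduced, irreducible and of degree $2$ (using smoothness of $X$ and the unsplitness of $\mathcal{K}$ — a degeneration of $\ell$ would be a connected union of two lines of anticanonical degree $2$ — or a direct analysis of surfaces in $\mathbb{P}^3$ occurring as VMRT of a Fano fourfold of Picard number $1$). If this reduction proves too delicate to carry out uniformly, the fallback is to replace Step 3 altogether by the classification of Mukai fourfolds of Picard number $1$ (Mukai, Mella; the index-$2$ analogue of Theorem \ref{class-del-pezzo}): for the weighted and ordinary complete intersections one checks directly that the lines of $X$ through a general point $x$ are cut out in $\mathbb{P}(\Omega_{X,x})\cong\mathbb{P}^3$ by the expected number of equations of positive degree, so that this scheme is non-empty; and for the linear sections of $\mathrm{Gr}(2,5)$, $\mathrm{Gr}(2,6)$, the spinor tenfold, the Lagrangian Grassmannian $\mathrm{LG}(3,6)$ and the $G_2$-fivefold, one uses that these rational homogeneous spaces are covered by lines and that a smooth linear section of dimension $4$ inherits this property by the same dimension count.
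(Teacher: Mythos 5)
Your argument is essentially the paper's own proof: the same dichotomy $H\cdot\ell\in\{1,2\}$ coming from parity of $-K_X\cdot\ell=2H\cdot\ell$ and the bound $p+2\le 5$, followed by the same appeal to the numerical characterization of quadrics (the paper cites \cite[Theorem 0.1]{Miy04} together with \cite{DH17}) to rule out $H\cdot\ell=2$, $p=2$ via the index. Your worry in the final paragraph is unnecessary: the characterization invoked is purely numerical (a Fano $n$-fold of Picard number $1$ whose minimal rational curves through a general point have anticanonical degree $n$ is a quadric or $\mathbb{P}^n$), so no analysis of $\mathcal{C}_x$ as a hypersurface in $\mathbb{P}^3$ and no fallback to the Mukai classification is needed.
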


\begin{proof}
Let $\mathcal{K}$ be a dominating family of minimal rational curves. 
Then a general element $[C]$ of $\mathcal{K}$ is standard. 
Let $f:\mathbb{P}^1\rightarrow C$ be the normalization and we can write
$$
f^*T_X=\mathcal{O}_{\mathbb{P}^1}(2)\oplus \mathcal{O}_{\mathbb{P}^1}(1)^{\oplus p}\oplus \mathcal{O}_{\mathbb{P}^1}^{\oplus(3-p)}
$$
for some non-negative integer $p$. 
%Let
Then 
$$
p+2=-K_X\cdot C=2H\cdot C\leq 5,
$$
where $H$ is the ample generator of $\textup{Pic}(X)$. 
Hence, $H\cdot C=1$ or $2$. 
If $H\cdot C=1$, then $X$ is covered by lines, noting that a dominating family of minimal rational curves covers $X$. 

Therefore, we may assume that $H\cdot C=2$ and $p=2$. 
But then, since $\rho(X)=1$, by \cite[Theorem 0.1]{Miy04} (cf.~\cite{DH17}), $X$ is a smooth quadric of  index $4$, a contradiction. 
\end{proof}

With almost the same proof as in Lemma \ref{lem-lines-del}, we get the following result on the  normal bundles of lines on Fano manifolds of index 1.
\begin{lemma}\label{lem-normalbdle-index1}
Let $X$ be a Fano manifold of Picard number 1 and of index 1. 
If $X$ contains a line $\ell$, then the normal bundle $\mathcal{N}_{\ell/X}=\oplus\mathcal{O}(a_i)$ such that there is at least  one $a_i<0$.
\end{lemma}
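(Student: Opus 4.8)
The plan is to mimic the argument of Lemma \ref{lem-lines-del} but keep track of the fact that now $-K_X \sim H$ rather than $-K_X \sim (n-1)H$, which forces the normal bundle to have strictly lower total degree. Write $n = \dim(X)$ and let $H$ be the fundamental divisor, so $H$ is the ample generator of $\mathrm{Pic}(X)$ and $H \cdot \ell = 1$. Since $\ell \cong \mathbb{P}^1$, every vector bundle on $\ell$ splits, so write $\mathcal{N}_{\ell/X} \cong \bigoplus_{i=1}^{n-1} \mathcal{O}_\ell(a_i)$ with $a_1 \ge \cdots \ge a_{n-1}$.

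The key computation is the degree of $\mathcal{N}_{\ell/X}$. From the exact sequence $0 \to T_\ell \to T_X|_\ell \to \mathcal{N}_{\ell/X} \to 0$ one has $\deg \mathcal{N}_{\ell/X} = \deg(T_X|_\ell) - \deg(T_\ell) = (-K_X \cdot \ell) - 2 = (H \cdot \ell) - 2 = 1 - 2 = -1$. Hence $\sum_{i=1}^{n-1} a_i = -1$. Since the $a_i$ are integers summing to $-1$, they cannot all be $\ge 0$, so at least one $a_i$ is strictly negative, which is exactly the claim. (One does not even need the second exact sequence used in Lemma \ref{lem-lines-del} bounding $a_i$ from above, since a negative summand is forced purely by the total degree being negative.)

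The one point that needs a word of care is why $-K_X \cdot \ell = H \cdot \ell$: this is immediate from $-K_X \sim i(X) H$ together with the hypothesis $i(X) = 1$. Thus I would state: since $X$ has index $1$, $-K_X \sim H$, and since $\ell$ is a line, $H \cdot \ell = 1$, giving $\deg \mathcal{N}_{\ell/X} = -K_X \cdot \ell - 2 = -1$; as the $a_i$ are integers with $\sum a_i = -1 < 0$, some $a_i < 0$. There is essentially no obstacle here — the lemma is a one-line degree count — so the "hard part," such as it is, is merely recording the standard normal bundle exact sequence and the adjunction-type identity correctly; everything else is forced.
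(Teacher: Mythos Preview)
Your proof is correct and follows essentially the same approach the paper intends (it defers to the proof of Lemma~\ref{lem-lines-del}): use the tangent/normal sequence on $\ell$ to compute $\sum a_i = -K_X\cdot\ell - 2 = -1$, which forces a negative summand. Your remark that the second exact sequence from Lemma~\ref{lem-lines-del} (bounding each $a_i\le 1$) is unnecessary here is a valid minor simplification.
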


To close this section, 
 we recall Lemmas \ref{lem-index-reduction} and \ref{lem-class-middleindex} below so as to deal with the case when  the index of a Fano manifold is large in Section \ref{sec-high-picard}. 

\begin{lemma}[{\cite[Theorem 2.2]{Wis94}; cf.~\cite[Theorems 7.2.1 and 7.2.2]{IP99}}]\label{lem-index-reduction}
Let $X$ be a Fano manifold of index $r$ and of dimension $n\ge 3$.
If $r>\frac{n}{2}+1$, then $\rho(X)=1$. 
If $r=\frac{n}{2}+1$ and $\rho(X)\ge 2$, then $X\cong\mathbb{P}^{r-1}\times \mathbb{P}^{r-1}$. 
If $r=\frac{n+1}{2}$ and $\rho(X)\ge 2$, then $X$ is  one of the following
$$\mathbb{P}_{\mathbb{P}^r}(\mathcal{O}(2)\oplus\mathcal{O}(1)^{\oplus(r-1)}),~~\mathbb{P}_{\mathbb{P}^r}(T_{\mathbb{P}^r}),~~\textup{or }~\mathbb{P}^{r-1}\times Q^r,$$
where $T_{\mathbb{P}^r}$ is the tangent bundle of $\mathbb{P}^r$ and $Q^r$ is a smooth quadric of dimension $r$.
\end{lemma}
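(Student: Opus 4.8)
The statement is a piece of the structure theory of Fano manifolds of large index --- essentially the classically settled range of Mukai's conjecture --- so the plan is to derive it from the Mori cone/contraction machinery and to handle the two boundary values of $r$ by a short classification at the end.

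I would first cut down to the essential range. By the Kobayashi--Ochiai theorem $r\le n+1$, with $r=n+1$ giving $X\cong\mathbb{P}^n$ and $r=n$ giving $X\cong Q^n$ (both of Picard number $1$), so we may assume $r\le n-1$. Write $-K_X\sim rH$ with $H$ an ample Cartier divisor. The one numerical input that drives everything is the \emph{length estimate}: every rational curve $C$ has $-K_X\cdot C=r(H\cdot C)\ge r$, so every extremal ray $R$ of $\overline{NE}(X)$ has length $\ell(R)\ge r$. By Wi\'sniewski's inequality, its contraction $\phi_R\colon X\to Y$ then has a fiber $F$ with $\dim F\ge\ell(R)\ge r$ when $\phi_R$ is birational, whereas if $\phi_R$ is of fiber type its general fiber $F$ has $\dim F\ge\ell(R)-1\ge r-1$ and $\dim Y=n-\dim F\le n-r+1$.

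Now assume $\rho(X)\ge 2$. The core step is to show that, for $r\ge\tfrac{n}{2}+1$, every elementary contraction of $X$ is of fiber type. A birational contraction would, by the above, carry a fiber $F$ of dimension $\ge r$ inside its exceptional locus (which has dimension $\le n-1$); a short analysis of such contractions --- $F$ is then a linear space or a quadric contracted to a point or a low-dimensional centre, and $X$ is essentially a smooth blow-up --- shows, by the very computation that gives $\operatorname{Bl}_p\mathbb{P}^n$ index $1$, that $-K_X$ cannot then be $r$ times an ample Cartier divisor, a contradiction. Granting this, choose two distinct extremal rays $R_1\ne R_2$ with (fiber-type) contractions $\phi_i\colon X\to Y_i$. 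Since distinct extremal rays meet only at the origin, no curve is contracted by both $\phi_1$ and $\phi_2$, so $(\phi_1,\phi_2)\colon X\to Y_1\times Y_2$ is finite and hence $n\le\dim Y_1+\dim Y_2\le 2(n-r+1)$, i.e. $2(r-1)\le n$. For $r>\tfrac{n}{2}+1$ this is impossible, so $\rho(X)=1$, which is the first assertion.

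It remains to identify $X$ at the boundary. If $r=\tfrac{n}{2}+1$, then $2(r-1)=n$ forces all the above inequalities to be equalities: $\dim F_i=\dim Y_i=r-1$ and $(\phi_1,\phi_2)$ is finite and surjective onto $Y_1\times Y_2$. Each general fiber $F_i$ is a Fano $(r-1)$-fold of index $\ge r=\dim F_i+1$, hence $F_i\cong\mathbb{P}^{r-1}$ by Kobayashi--Ochiai; one then checks that $\phi_i$ is a $\mathbb{P}^{r-1}$-bundle with base $Y_i\cong\mathbb{P}^{r-1}$, and concludes $X\cong\mathbb{P}^{r-1}\times\mathbb{P}^{r-1}$ (with $\rho(X)=2$). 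If $r=\tfrac{n+1}{2}$, then $2(r-1)=n-1$ leaves a one-dimensional slack, and now the divisorial contraction blowing a $\mathbb{P}^{r-1}$ down to a point is no longer excluded; one runs through the remaining possibilities --- up to relabeling $X$ has a fiber-type contraction onto a base of dimension $r$ with fibers $\mathbb{P}^{r-1}$, so $X\cong\mathbb{P}_{\mathbb{P}^r}(\mathcal{E})$ for a rank-$r$ bundle $\mathcal{E}$, or $X$ is isomorphic to a product --- and checking which of these are Fano of index exactly $r$ leaves precisely $\mathbb{P}_{\mathbb{P}^r}(\mathcal{O}(2)\oplus\mathcal{O}(1)^{\oplus(r-1)})$, $\mathbb{P}_{\mathbb{P}^r}(T_{\mathbb{P}^r})$, and $\mathbb{P}^{r-1}\times Q^r$. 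The two places that need genuine work are the exclusion of birational contractions in the middle range and the bundle bookkeeping in the last case; since all of this is classical and assembled in \cite[\S 7.2]{IP99} (building on Wi\'sniewski's work and, in low dimension, the Mori--Mukai classification), in the paper we simply cite that reference.
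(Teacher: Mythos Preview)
The paper does not prove this lemma at all: it is stated with the citation ``cf.~e.g.~\cite[Theorems 7.2.1 and 7.2.2]{IP99}'' and used as a black box. Your proposal correctly identifies this at the very end, and the sketch you give of the Wi\'sniewski-type argument (length bound $\ell(R)\ge r$, two fiber-type contractions giving a finite map to $Y_1\times Y_2$, and the resulting dimension count $2(r-1)\le n$) is indeed the classical route recorded in \cite{IP99}, so there is nothing to compare.
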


\begin{lemma}[{cf.~e.g.~\cite[(3.1), (4.1), (5.1), (6.3)]{Wis94}}]\label{lem-class-middleindex}
Let $X$ be a Fano manifold of  $\dim(X)=2r\ge 6$, $i(X)=r$ (i.e., $-K_X\sim rL$ for some ample Cartier divisor $L$), and $\rho(X)\ge 2$. 
Then either (i) $X\cong\mathbb{P}^2\times\mathbb{P}^2\times\mathbb{P}^2$, or (ii) $\rho(X)=2$ and there exists a contraction $p:X\to Y$ of an extremal ray such that $\dim (Y)<\dim (X)$ and all fibres of $p$ are of dimension $\le r$.
Further, if (ii) happens,  one of the following occurs (with $\mathcal{E}:=p_*L$).
\begin{enumerate}
\item $\dim (Y)=r+1$, $\mathcal{E}$ is  locally free of rank $r$, $X=\mathbb{P}_Y(\mathcal{E})$  is one of the following
$$V_d\times\mathbb{P}^{r-1},~~\mathbb{P}_{\mathbb{P}^{r+1}}(\mathcal{O}(2)^{\oplus 2}\oplus\mathcal{O}(1)^{\oplus(r-2)}),~~\mathbb{P}_{\mathbb{P}^{r+1}}(\mathcal{O}(3)\oplus\mathcal{O}(1)^{\oplus(r-1)}),$$
$$\mathbb{P}_{Q^{r+1}}(\mathcal{O}(2)\oplus\mathcal{O}(1)^{\oplus (r-1)}),~~\mathbb{P}_{Q^4}(\mathbb{E}(1)\oplus\mathcal{O}(1))$$
where $V_d$ is a del Pezzo manifold of degree $d$, $Q^n$ is a smooth quadric of dimension $n$, and $\mathbb{E}$ denotes a spinor bundle on a 4-dimensional smooth quadric.
\item $\dim (Y)=r$, $\mathcal{E}$ is  locally free  of rank $r+2$, and $p$ is a quadric bundle so that $X$  embeds into $\mathbb{P}(\mathcal{E})$ as a divisor of relative degree 2. 
\item $\dim (Y)=r+1$, $\mathcal{E}$ is  reflexive  of rank $r$ %with isolated singularities 
and $X\cong\mathbb{P}(\mathcal{E})$.
Further, if $X$ is not of type (2), then $X$ is either a blow-up of  a smooth quadric $Q^{2r}$ along a linear space $\mathbb{P}^{r-1}$, or an intersection of two smooth divisors in $\mathbb{P}^{r+1}\times\mathbb{P}^{r+1}$ of bidegree $(1,1)$.
\end{enumerate}
\end{lemma}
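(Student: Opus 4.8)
This statement is essentially Wi\'sniewski's classification \cite{Wis94}, and the plan is to run Mori theory on the extremal rays of $X$. Since $-K_X\sim rL$ with $L$ ample and Cartier, every rational curve $C$ on $X$ has $-K_X\cdot C=r(L\cdot C)\ge r$, so every extremal ray $R$ of $\overline{\textup{NE}}(X)$ has length $\ell(R):=\min\{-K_X\cdot C:[C]\in R,\ C\ \text{rational}\}\ge r$. As $\rho(X)\ge 2$, the Cone Theorem supplies at least two such rays, and for the contraction $\varphi_R\colon X\to Y_R$ of an extremal ray $R$ I would invoke Wi\'sniewski's inequality $\dim\textup{Exc}(\varphi_R)+\dim F\ge\dim X+\ell(R)-1\ge 3r-1$ for every nontrivial fibre $F$. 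Hence, if $\varphi_R$ is of fibre type then $\dim F\ge r-1$ and $\dim Y_R\le r+1$, while if $\varphi_R$ is birational then $\dim F\ge r$ and all nontrivial fibres are large. Since a Fano manifold cannot have all of its elementary contractions birational, $X$ carries at least one fibre-type contraction $p\colon X\to Y$, and combining the length of the contracted ray with the inequality one gets that every fibre of $p$ has dimension $\le r$ and $\dim Y\in\{r,r+1\}$. When $\rho(X)\ge 3$ I would argue that $X$ decomposes as a product, and then checking which products of dimension $2r$ and index $r$ have $\rho\ge 3$ leaves only $X\cong\mathbb{P}^2\times\mathbb{P}^2\times\mathbb{P}^2$ (so $r=3$), which is case (i). From here on $\rho(X)=2$.

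Suppose first $\dim Y=r+1$. A general fibre $F$ of $p$ is then a Fano $(r-1)$-fold with $-K_F=rL|_F$, so $i(F)\ge r=\dim F+1$, and Kobayashi--Ochiai \cite{KO73} forces $F\cong\mathbb{P}^{r-1}$. I would then use Fujita-style adjunction/scroll theory to upgrade this to $X\cong\mathbb{P}_Y(\mathcal{E})$ with $\mathcal{E}:=p_*L$ of rank $r$: locally free when $p$ is equidimensional (case (1)), and only reflexive when $p$ has jumping fibres of dimension $r$ (case (3)). Comparing $-K_X=rL$ with the relative canonical formula on $\mathbb{P}_Y(\mathcal{E})$ yields a divisibility relation among $K_Y$, $\det\mathcal{E}$ and the chosen normalisation of $\mathcal{E}$, which forces $Y$ to be a Fano $(r+1)$-fold of large index --- namely $\mathbb{P}^{r+1}$, a quadric $Q^{r+1}$, a del Pezzo $(r+1)$-fold (classified in Lemma \ref{class-del-pezzo}), or $Q^4$ when $r=3$ --- after which the uniform splitting type of $\mathcal{E}$ along lines determines $\mathcal{E}$ and gives the list in (1). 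For the reflexive case I would inspect the jumping locus of $p$ together with the second extremal contraction of $X$ to recover the two possibilities in the last sentence of (3).

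Suppose instead $\dim Y=r$. Then $p$ is equidimensional with all fibres of dimension $r$, and a general fibre $F$ is Fano of dimension $r$ with $i(F)\ge r=\dim F$, so $F\cong\mathbb{P}^r$ or $F\cong Q^r$ by Kobayashi--Ochiai; the case $F\cong\mathbb{P}^r$ is ruled out by the index relation (it would force the fibrewise $-K$-coefficient to be $r+1$), so $p$ is a quadric bundle and $X\hookrightarrow\mathbb{P}(\mathcal{E})$ with $\mathcal{E}=p_*L$ of rank $r+2$ as a divisor of relative degree $2$, which is case (2). Finally, if the second extremal contraction of $X$ is birational, the length bound makes its fibres have dimension $\ge r$; analysing this divisorial contraction (its exceptional divisor being a projective bundle over the contracted locus, after Ando and Wi\'sniewski) I would reconstruct $X$ and recover again the possibilities listed in (3). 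Assembling these cases gives the statement.

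The step I expect to be the main obstacle is the one in the second paragraph: turning ``general fibre $\cong\mathbb{P}^{r-1}$'' into an honest projectivised-sheaf structure $X=\mathbb{P}_Y(\mathcal{E})$ while locating precisely where $p$ fails to be equidimensional --- this is exactly what separates the locally free case (1) from the reflexive case (3) --- and then reading off the pair $(Y,\mathcal{E})$ from the canonical-class divisibility together with the splitting behaviour of $\mathcal{E}$ along minimal rational curves. The birational case is of comparable difficulty, since one must pin down the exact type of the divisorial contraction before reconstructing $X$. For all of this I would rely on the adjunction-theoretic and extremal-contraction machinery of Fujita, Mori, Ando and Wi\'sniewski rather than redevelop it.
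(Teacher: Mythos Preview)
The paper does not supply its own proof of this lemma: it is recorded with a bare citation to \cite[(3.1), (4.1), (5.1), (6.3)]{Wis94} and then used as a black box in the proof of Theorem~\ref{thm-middle-index}. There is therefore no in-paper argument to compare your proposal against. Your sketch follows the line of Wi\'sniewski's own argument in \cite{Wis94} --- length bounds $\ell(R)\ge r$ on extremal rays, the inequality $\dim\textup{Exc}(\varphi_R)+\dim F\ge\dim X+\ell(R)-1$, Kobayashi--Ochiai applied to general fibres, and then adjunction and uniform-bundle considerations to pin down $(Y,\mathcal{E})$ --- so in that sense it is consistent with the cited source, which is all that the paper requires here.

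One caveat on the sketch itself: the sentence ``a Fano manifold cannot have all of its elementary contractions birational'' is not a general theorem you can simply invoke. In \cite{Wis94} the existence of a fibre-type contraction with fibres of dimension at most $r$ is obtained by playing the two extremal rays off against each other via the length inequality (a too-large fibre of one contraction would meet curves spanning the other ray, contradicting $\rho=2$), not from any such blanket principle. If you intend to actually reconstruct the proof rather than cite it, that step needs the genuine argument; otherwise your outline captures the overall structure, with the detailed case analysis for (1)--(3) being where the real work in \cite{Wis94} lies.
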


\section{Morphisms onto weighted hypersurfaces, Proof of Theorem \ref{main-del-pezzo-non}}
In this section, we consider Conjecture \ref{main-conj-bounded} (a stronger version of Question \ref{ques-main-bounded-fano})  when $X$ is a weighted hypersurface and prove Theorem \ref{main-del-pezzo-non}.
By Theorem \ref{class-del-pezzo}, del Pezzo manifolds of lower degrees are hypersurfaces in some weighted projective spaces.
Therefore, the following theorem plays a significant role in the proof of Theorem \ref{main-del-pezzo-non}.

\begin{theorem}\label{thm-wps-ci}
Let $X$ be a smooth weighted hypersurface of dimension $n\ge 2$ and of degree $d$ in a  weighted projective space $\mathbb{P}(a_0,a_1,\cdots,a_{n+1})$ with $a_0\ge a_1\ge a_i=1$ for each $i\ge 2$. 
Suppose $a_0$ and $a_1$ are coprime and $d\ge a_0+a_1+1$.  
Then $c_n(\Omega_X(a_0+a_1))>\mathcal{O}_X(1)^n(a_0+a_1)^n$.
Further, for any  smooth projective variety $Y$ of dimension $n$ and of Picard number 1, there is a number $N>0$ such that the degree of any finite morphism $Y\to X$ is less than $N$; thus, $X$ admits no non-isomorphic endomorphism. 
\end{theorem}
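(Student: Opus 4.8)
The plan is to reduce everything to Lemma \ref{lem-arv-prop2.1}, which says that if $\Omega_X(uH_X)$ is globally generated outside finitely many points for some $u>0$ and $c_n(\Omega_X(uH_X)) - u^n H_X^n > 0$, then the degree of any finite surjective morphism $Y\to X$ is bounded in terms of $X$ and $Y$. Here $H_X = \mathcal{O}_X(1)$ is the ample generator of $\operatorname{N}^1(X)$ (one should first note that, under the hypotheses — $a_i = 1$ for $i \ge 2$ and $n\ge 2$ — the weighted hypersurface $X$ is well-formed with $\operatorname{Pic}(X)$ generated by $\mathcal{O}_X(1)$, so that $\rho(X) = 1$ and Lemma \ref{lem-arv-prop2.1} applies). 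So the two things to establish are: (a) $\Omega_X(a_0+a_1)$ is globally generated away from finitely many points, and (b) the Chern class inequality $c_n(\Omega_X(a_0+a_1)) > \mathcal{O}_X(1)^n (a_0+a_1)^n$. Once these are in hand, the "further" statement and the non-existence of non-isomorphic endomorphisms follow immediately: an endomorphism would be a finite surjective morphism $X\to X$ of arbitrarily large degree, contradicting boundedness.

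For (a), I would argue on the ambient space first. On $\mathbb{P} = \mathbb{P}(a_0,\dots,a_{n+1})$ there is a (generalized) Euler sequence expressing $\Omega_{\mathbb{P}}$ as the kernel of $\bigoplus_i \mathcal{O}_{\mathbb{P}}(-a_i) \to \mathcal{O}_{\mathbb{P}}$; twisting by $\mathcal{O}_{\mathbb{P}}(a_0+a_1)$ and using $a_0 \ge a_1 \ge a_i = 1$ gives that each summand $\mathcal{O}_{\mathbb{P}}(a_0+a_1-a_i)$ has nonnegative twist, hence (being a quotient situation via the dual/conormal picture, or by a direct monomial computation) $\Omega_{\mathbb{P}}(a_0+a_1)$ is globally generated on the locus where $\mathbb{P}$ is smooth, which includes $X$ since $X$ is smooth and well-formed (so $X$ is disjoint from $\operatorname{Sing}\mathbb{P}$). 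Then the conormal sequence $0 \to \mathcal{O}_X(-d) \to \Omega_{\mathbb{P}}|_X \to \Omega_X \to 0$, twisted by $\mathcal{O}_X(a_0+a_1)$, shows $\Omega_X(a_0+a_1)$ is a quotient of a globally generated sheaf, hence globally generated (here "outside finitely many points" gives slack, but I expect full global generation on $X$ away from $\operatorname{Sing}\mathbb{P}$, which already suffices). The coprimality of $a_0,a_1$ is not really needed for (a); it is there to guarantee well-formedness/local freeness statements and will matter in making the ambient Euler sequence behave.

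For (b), the computation is bookkeeping with Chern classes. Write $h = c_1(\mathcal{O}_X(1))$. Expanding $c_n(\Omega_X(a_0+a_1)) = \sum_{i=0}^{n} c_{n-i}(\Omega_X)\,(a_0+a_1)^i h^i$, the leading term ($i=n$) is exactly $(a_0+a_1)^n h^n = (a_0+a_1)^n \mathcal{O}_X(1)^n$, so the claimed strict inequality is equivalent to $\sum_{i=0}^{n-1} c_{n-i}(\Omega_X)\,(a_0+a_1)^i h^i > 0$. From the conormal sequence one gets $c(\Omega_X) = c(\Omega_{\mathbb{P}}|_X)/(1 - d h)$ and $c(\Omega_{\mathbb{P}}) = \prod_i (1 - a_i h)/(1) $ (suitably interpreted via the Euler sequence: $c(\Omega_{\mathbb{P}}) = \prod_{i=0}^{n+1}(1+a_i h)^{-1}$ is wrong sign-wise; the correct statement is $c(T_{\mathbb{P}}) = \prod (1+a_i h)$, so $c(\Omega_{\mathbb{P}}) = \prod(1 - a_i h) \bmod h^{n+2}$ up to the usual Euler-sequence correction). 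The point is that every $c_j(\Omega_X)$ is a nonnegative-integer multiple of $h^j$ with the sign pattern making $c_{n-i}(\Omega_X) h^i$ a nonnegative multiple of $h^n$, and at least one term — e.g. the $i = n-1$ term, which involves $c_1(\Omega_X) = (\sum a_i - d)(-h) \cdot(-1) = (d - \sum a_i) h$... one must track signs here — is strictly positive because $d \ge a_0 + a_1 + 1$ forces the relevant combination to be positive. I expect the honest way is: show $\Omega_X(a_0+a_1)$ is globally generated and not ample-enough to have vanishing top Chern class; more concretely, a globally generated bundle of rank $n$ on an $n$-fold has $c_n \ge 0$, with equality iff a general section is nowhere zero, i.e. iff the bundle has a trivial quotient / the variety is "close to" having trivial tangent bundle — which a hypersurface of degree $d \ge a_0+a_1+1$ is not. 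Making this last step precise (ruling out $c_n(\Omega_X(a_0+a_1)) = 0$) is the main obstacle: one either does the explicit Chern-class polynomial computation and checks positivity of each coefficient using $d \ge a_0+a_1+1$ and $a_i \ge 1$, or one argues geometrically that a nonzero global section of $\Omega_X(a_0+a_1)$ cannot be nowhere-vanishing (e.g. by restricting to a line or a curve class and computing a degree, using that $\Omega_X$ restricted to a general curve has a negative summand). I would pursue the explicit polynomial-positivity route, since all $c_j(\Omega_X)$ are computable in closed form from the two exact sequences above and the inequality $d \ge a_0 + a_1 + 1$ is exactly what is needed to make the sub-leading coefficient positive while all others are manifestly nonnegative.
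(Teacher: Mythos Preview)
Your overall strategy is right: the paper also reduces to Lemma~\ref{lem-arv-prop2.1}, and Lemma~\ref{lem-wps-gg} plus Proposition~\ref{pro-top-chern-cal} are exactly the two ingredients (a) and (b) you identify. But both of your sketches have genuine gaps.

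For (a), the Euler sequence exhibits $\Omega_{\mathbb{P}}$ as a \emph{subsheaf} of $\bigoplus_i \mathcal{O}_{\mathbb{P}}(-a_i)$, not a quotient. Twisting by $\mathcal{O}(a_0+a_1)$ gives $\Omega_{\mathbb{P}}(a_0+a_1)\hookrightarrow \bigoplus_i \mathcal{O}(a_0+a_1-a_i)$, and a subsheaf of a globally generated sheaf need not be globally generated (think of $\mathcal{O}(-1)\subset \mathcal{O}^2$ on $\mathbb{P}^1$). Your parenthetical ``via the dual/conormal picture'' does not fix this: dualizing gives a surjection onto $T_{\mathbb{P}}$, not onto $\Omega_{\mathbb{P}}$. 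The paper's trick (Lemma~\ref{lem-wps-gg}) is to take the $n$-th exterior power of the surjection $\bigoplus_i \mathcal{O}_{\mathbb{P}_0}(a_i)\twoheadrightarrow T_{\mathbb{P}_0}$ and then use the isomorphism $\bigwedge^n T_{\mathbb{P}_0}\cong \Omega_{\mathbb{P}_0}(\sum a_i)$; after twisting down by $\mathcal{O}(-\sum_{i\ge 2} a_i)$ one gets a surjection from a direct sum of line bundles of the form $\mathcal{O}(a_0+a_1-a_k-a_l)$ onto $\Omega_{\mathbb{P}_0}(a_0+a_1)$, and these line bundles are globally generated away from the locus $x_2=\cdots=x_{n+1}=0$, which meets $X$ in at most finitely many points. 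This exterior-power step is the missing idea.

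For (b), your sign claim ``every $c_j(\Omega_X)$ is a nonnegative-integer multiple of $h^j$'' is false in general (already $c_1(\Omega_X)=(d-\sum a_i)h$ can have either sign), so the coefficient-by-coefficient positivity route does not work as stated. The paper instead computes $c_n(\Omega_X(a))$ in closed form by a residue argument (Proposition~\ref{pro-top-chern-cal}): write the total Chern class as a rational function in $h$ coming from the Euler and conormal sequences, and extract the $h^n$-coefficient as minus the sum of residues at the remaining poles. This yields an explicit expression in $d$, $a=a_0+a_1$, and $a_0a_1$, and the inequality $c_n(\Omega_X(a))>a^n h^n$ is then checked by elementary manipulation of the function $g(x)=a(x-1)^n-x(a-1)^n-(x-a)$, using monotonicity for $x\ge a+1$ and the hypothesis $d\ge a+1$. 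Neither of your two proposed routes (sign-checking or a nowhere-vanishing-section argument) is carried out, and the first is based on an incorrect premise.
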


\begin{remark}\label{rem-locally-free-wps}
By Notation \ref{notation2.1}, in a weighted projective space $\mathbb{P}=\mathbb{P}(a_0,\cdots,a_{n+1})$, if $a_i$ are mutually coprime, then $\mathbb{P}$ has only isolated singularities.
In this case,  every smooth weighted hypersurface of dimension $\ge 2$ is well-formed and thus contained in the smooth locus  of $\mathbb{P}$. 
So $\mathcal{O}_X(n)$ is locally free and $\mathcal{O}_X(a+b)=\mathcal{O}_X(a)\otimes\mathcal{O}_X(b)$ for any $n,a,b\in\mathbb{Z}$. 
\end{remark}

The following corollary on the smooth hypersurfaces in the usual projective space is an immediate consequence of Theorem \ref{thm-wps-ci} and \cite{Ame07} (cf.~\cite{Bea01}).
\begin{corollary}\label{cor-usual-hyper}
Let $X\subseteq\mathbb{P}^{n+1}$ be a smooth hypersurface of degree $d$.
If either $d\ge 3$, or $d=2$ and $n\ge 3$, then for any smooth projective variety $Y$ of dimension $n$ and of Picard number 1, there is a positive number $N$ such that any finite morphism $Y\to X$ has the degree no more than $N$.
\end{corollary}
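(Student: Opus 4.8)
The plan is to reduce immediately to two disjoint cases and invoke results already available in the excerpt. First, when $n \le 2$, the corollary only claims something for $d \ge 3$ (for $n=1$ a smooth plane curve of degree $d\ge 3$ has $K_X$ ample, so the classical Kobayashi--Ochiai bound \cite{KO75} applies; for $n=2$ a smooth surface of degree $d\ge 4$ is of general type and again the boundedness property holds, while $d=3$ gives a del Pezzo surface which, being of Picard number $1$ only in degenerate senses, actually $X\subseteq\mathbb{P}^3$ of degree $3$ is the cubic surface and one applies Theorem \ref{thm-wps-ci} directly). Rather than belabor low dimensions, the cleanest uniform route is: apply Theorem \ref{thm-wps-ci} to the range $d \ge 3$ with \emph{all} weights equal to $1$, i.e.\ take $a_0 = a_1 = 1$ (these are trivially coprime) and $a_i = 1$ for $i \ge 2$, so $\mathbb{P}(1^{\oplus(n+2)}) = \mathbb{P}^{n+1}$ and the hypothesis $d \ge a_0 + a_1 + 1 = 3$ is exactly $d \ge 3$. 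Theorem \ref{thm-wps-ci} then yields $c_n(\Omega_X(2)) > \mathcal{O}_X(1)^n \cdot 2^n$, and the second conclusion of that theorem gives the desired bound $N$ on $\deg(f)$ for every finite morphism $Y \to X$, for every smooth $Y$ of dimension $n$ and Picard number $1$. This handles all $d \ge 3$ in every dimension $n \ge 1$ in one stroke.

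It remains to treat the case $d = 2$ with $n \ge 3$, i.e.\ $X = Q^n \subseteq \mathbb{P}^{n+1}$ a smooth quadric of dimension $n \ge 3$. Here Theorem \ref{thm-wps-ci} does not apply since $d = 2 < a_0 + a_1 + 1 = 3$, but this is precisely the content of Amerik's theorem \cite{Ame07} (see also \cite{Bea01}): a smooth quadric of dimension $\ge 3$ satisfies the boundedness property, so there is a positive number $N$ (depending on $X$ and $Y$) bounding the degree of any finite morphism $Y \to X$. Combining the two cases gives the corollary as stated.

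The only mild subtlety — and the one place where a referee might want a word — is whether we are entitled to apply Theorem \ref{thm-wps-ci} verbatim with all weights equal to $1$: the theorem is phrased with $a_0 \ge a_1 \ge 1$ and requires $a_0, a_1$ coprime, and the degenerate choice $a_0 = a_1 = 1$ satisfies all of these. In this case $\mathbb{P}(1,\dots,1) = \mathbb{P}^{n+1}$ is smooth, Remark \ref{rem-locally-free-wps} is vacuous but harmless, $\mathcal{O}_X(1)$ is the hyperplane bundle, $\Omega_X(2) = \Omega_X \otimes \mathcal{O}_X(2)$ is globally generated on all of $X$ (directly from the Euler sequence, so the "outside finitely many points" clause in Lemma \ref{lem-arv-ggg} is automatic), and the Chern-class inequality $c_n(\Omega_X(2)) > 2^n \mathcal{O}_X(1)^n$ is exactly the numerical input needed for Lemma \ref{lem-arv-prop2.1}. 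So no step is genuinely an obstacle; the corollary is a direct packaging of Theorem \ref{thm-wps-ci} (for $d \ge 3$) and \cite{Ame07} (for $d = 2$, $n \ge 3$).
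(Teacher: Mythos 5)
Your proof is correct and is essentially identical to the paper's: the paper derives this corollary in one line as "an immediate consequence of Theorem \ref{thm-wps-ci} and \cite{Ame07}", i.e.\ the specialization $a_0=a_1=\cdots=1$, $d\ge a_0+a_1+1=3$ for the first case and Amerik's quadric theorem for $d=2$, $n\ge 3$. One small caveat: drop the aside claiming the argument covers $n=1$ --- Theorem \ref{thm-wps-ci} assumes $n\ge 2$, and for $n=1$, $d=3$ the statement is actually false (a smooth plane cubic is an elliptic curve, which admits finite self-maps of unbounded degree), so the corollary should be read with the standing hypothesis $n\ge 2$.
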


We do some preparations before proving Theorem \ref{thm-wps-ci}.

\begin{lemma}\label{lem-wps-gg}
Let $X$ be a smooth well-formed weighted hypersurface of dimension $n\ge 2$ and of degree $d$ in a  weighted projective space $\mathbb{P}=\mathbb{P}(a_0,a_1,\cdots,a_{n+1})$ with $a_0\ge a_1\ge a_i=1$ for each $i\ge 2$.  
If $a_1=1$ (resp.~$a_1>1$), then $\Omega_{X}(a_{0}+a_{1})$ is globally generated (resp.~globally generated away from finitely many points). 
\end{lemma}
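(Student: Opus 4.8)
The plan is to reduce the lemma to the global generation of the twisted sheaf $\Omega_{\mathbb P}|_X(m)$, where $m:=a_0+a_1$, via the conormal sequence, and then to prove the latter by exhibiting enough explicit weighted-homogeneous differential forms and checking that they span fibres. For the reduction, recall that $X$, being smooth and well-formed, is disjoint from $\textup{Sing}\,\mathbb P$ (cf.~Notation~\ref{notation2.1}), so on $X$ every $\mathcal O_X(k)$ is a line bundle with $\mathcal O_X(a)\otimes\mathcal O_X(b)\cong\mathcal O_X(a+b)$, and the restrictions to $X$ of the twisted Euler sequence of $\mathbb P$ and of the conormal sequence of $X\subseteq\mathbb P$ remain exact:
\[0\to\Omega_{\mathbb P}|_X(m)\to\bigoplus_{i=0}^{n+1}\mathcal O_X(m-a_i)\xrightarrow{\;(g_i)\mapsto\sum_i a_ix_ig_i\;}\mathcal O_X(m)\to 0,\]
\[0\to\mathcal O_X(m-d)\to\Omega_{\mathbb P}|_X(m)\to\Omega_X(m)\to 0.\]
Since $\mathcal O_X(m)$ is a line bundle, $\Omega_{\mathbb P}|_X(m)$ is a subbundle of $\bigoplus_i\mathcal O_X(m-a_i)$ whose fibre at a closed point $p=[v]\in X$ (with $v=(v_0,\dots,v_{n+1})\ne 0$) is the hyperplane $\{\xi\in\mathbb C^{n+2}:\sum_i a_iv_i\xi_i=0\}$ of dimension $n+1$. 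As $\Omega_X(m)$ is a quotient of $\Omega_{\mathbb P}|_X(m)$, it suffices to show $\Omega_{\mathbb P}|_X(m)$ is globally generated (resp.~globally generated away from finitely many points). Note $m-a_0=a_1$, $m-a_1=a_0$, and $m-a_i=a_0+a_1-1$ for $i\ge 2$.

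Next I would write down sections: any tuple $(f_0,\dots,f_{n+1})$ of weighted-homogeneous polynomials with $\deg f_i=m-a_i$ and $\sum_i a_ix_if_i=0$ restricts to a global section $\sum_i f_i\,dx_i$ of $\Omega_{\mathbb P}|_X(m)$ with value $(f_0(v),\dots,f_{n+1}(v))$ at $p=[v]$. I would use, for $j\ge 2$,
\[\sigma_{01}=a_1x_1\,dx_0-a_0x_0\,dx_1,\qquad \sigma'_{0j}=x_j^{a_1}\,dx_0-a_0x_0x_j^{a_1-1}\,dx_j,\qquad \sigma'_{1j}=x_j^{a_0}\,dx_1-a_1x_1x_j^{a_0-1}\,dx_j,\]
together with $\omega_{ij,h}=h\,(x_j\,dx_i-x_i\,dx_j)$ for $2\le i<j\le n+1$ and $h$ homogeneous of degree $a_0+a_1-2$; a short check confirms the degrees and the Euler relation. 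Let $e_0,\dots,e_{n+1}$ be the standard basis of $\mathbb C^{n+2}$ and $L=\{x_2=\dots=x_{n+1}=0\}\subseteq\mathbb P$. At any $p=[v]\in X\setminus L$, choose $j\ge 2$ with $v_j\ne 0$: the values of the $\omega_{ik,\,x_j^{a_0+a_1-2}}$ ($2\le i<k\le n+1$) span the $(n-1)$-dimensional space $\{w:\sum_{l\ge 2}v_lw_l=0\}\subseteq\langle e_2,\dots,e_{n+1}\rangle$, while $\sigma'_{0j}$ and $\sigma'_{1j}$ supply nonzero multiples of $v_je_0-a_0v_0e_j$ and $v_je_1-a_1v_1e_j$, which have nonzero $e_0$- resp.~$e_1$-coordinate; together they span an $(n+1)$-dimensional, hence the whole, fibre. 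Thus $\Omega_X(m)$ is globally generated along $X\setminus L$.

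It remains to handle the curve $L\cong\mathbb P(a_0,a_1)$. Its points $P_0=[1:0:\dots:0]$, $P_1=[0:1:0:\dots:0]$ — and all of $L$ when $\gcd(a_0,a_1)>1$ — lie in $\textup{Sing}\,\mathbb P$, hence off $X$, so $X\cap L$ is a proper closed subset of the irreducible curve $L$ and therefore finite. If $a_1>1$, this already shows $\Omega_X(m)$ is globally generated away from the finite set $X\cap L$. If $a_1=1$, then $P_0$ is the only point of $L$ in $\textup{Sing}\,\mathbb P$, so every point of $X\cap L$ has the form $[\lambda:1:0:\dots:0]$, and there the additional forms $\sigma_{0j}=x_j\,dx_0-a_0x_0\,dx_j$ and $\omega_{1j,\,x_1^{a_0-1}}=x_1^{a_0-1}(x_j\,dx_1-x_1\,dx_j)$ for $j\ge 2$ — which are regular global sections precisely because $a_1=1$ — together with $\sigma_{01}$ are readily checked to span the fibre; hence $\Omega_X(m)$ is globally generated everywhere.

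The only substantive point is the fibrewise spanning computation of the last two steps; the exactness of the restricted Euler and conormal sequences, and the finiteness of $X\cap L$, are routine once one uses that $X$ avoids $\textup{Sing}\,\mathbb P$. The dichotomy in the conclusion is exactly the availability near $L$ of the forms $\sigma_{0j}=x_j\,dx_0-a_0x_0\,dx_j$ and $\omega_{1j,h}=h(x_j\,dx_1-x_1\,dx_j)$ ($j\ge 2$, $h$ of degree $a_0-1$), which are regular sections of $\Omega_{\mathbb P}|_X(m)$ only when $a_1=1$; without them the argument controls the generation only on $X\setminus L$.
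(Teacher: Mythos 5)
Your proof is correct and is essentially the paper's argument: both reduce via the conormal surjection to $\Omega_{\mathbb{P}}(a_0+a_1)$ restricted from the smooth locus $\mathbb{P}_0$ (using that the well-formed smooth $X$ misses $\textup{Sing}\,\mathbb{P}$), generate it by Euler-relation forms of the type $h\,(a_lx_l\,dx_k-a_kx_k\,dx_l)$ --- which the paper packages as the surjection $\bigoplus_{k\neq l}\mathcal{O}_{\mathbb{P}_0}(a_0+a_1-a_k-a_l)\twoheadrightarrow\Omega_{\mathbb{P}_0}(a_0+a_1)$ coming from $\bigwedge^{n}$ of the Euler sequence, invoking global generation of the nonnegative-degree summands away from $L$ (resp.\ the vertex) instead of your direct fibrewise spanning check --- and conclude from $\#(X\cap L)<\infty$, resp.\ from $X$ missing the vertex $[1:0:\cdots:0]$. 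The only point to tidy is the degenerate case $a_0=1$ (so $\mathbb{P}=\mathbb{P}^{n+1}$ and $m=2$), where $P_0\notin\textup{Sing}\,\mathbb{P}$ and $X\cap L$ may contain $P_0$, so your claim that every point of $X\cap L$ has the form $[\lambda:1:0:\cdots:0]$ needs a word; this is harmless, since the sections $\sigma_{01},\sigma_{02},\dots,\sigma_{0,n+1}$ you already listed span the fibre at $P_0$ (alternatively, the paper simply dispatches $a_0=1$ at the outset by citing Beauville's result for hypersurfaces in $\mathbb{P}^{n+1}$).
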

\begin{proof}
Let $x_i$ be the coordinates of $\mathbb{P}$ with $\deg(x_i)=a_i$. We may assume that $a_0>1$; otherwise, $\mathbb{P}$ is the usual projective space and our lemma follows from \cite{Bea01}.  
Denote by $\mathbb{P}_0$ the smooth locus of $\mathbb{P}$. 
By Remark \ref{rem-locally-free-wps}, $X\subseteq\mathbb{P}_0$.
Suppose that $a_1>1$ (resp. $a_1=1$). 
Then the reflexive sheaf $\mathcal{O}_{\mathbb{P}}(1)$ is not globally generated precisely along the line $\ell$ (resp. the vertex $p$) which is defined by  $x_2=\cdots=x_{n+1}=0$ (resp. $x_1=\cdots=x_{n+1}=0$). From the generalized (weighted) Euler short exact sequence, we have %the following surjection
$$\bigoplus_{i=0}^{n+1}\mathcal{O}_{\mathbb{P}_0}(a_i)\to T_{\mathbb{P}_0}\to 0.$$
Then we have
$$\bigwedge^n\bigoplus_{i=0}^{n+1}\mathcal{O}_{\mathbb{P}_0}(a_i)\to \bigwedge^nT_{\mathbb{P}_0}\to 0$$
By the adjunction formula (cf.~\cite[Chapter II, Exercise 5.16 (b)]{Har77} and \cite[Chapter V, Exercise 1.3.5]{Kol96}), we have $\bigwedge^nT_{\mathbb{P}_0}=\Omega_{\mathbb{P}_0}(\sum a_i)$. 
Since 
$$\bigwedge^n\bigoplus_{i=0}^{n+1}\mathcal{O}_{\mathbb{P}_0}(a_i)=\bigoplus_{0\le k\neq l\le n+1}\mathcal{O}_{\mathbb{P}_0}((\sum_{i=0}^{n+1} a_i)-a_k-a_l),$$
after twisting the invertible sheaf $\mathcal{O}_{\mathbb{P}_0}(a_0+a_1)$  (cf.~Remark \ref{rem-locally-free-wps}),  there is a surjection 
$$\bigoplus_{0\le k\neq l\le n+1}\mathcal{O}_{\mathbb{P}_0}((\sum_{i=0}^{n+1} a_i)+a_0+a_1-a_k-a_l)\twoheadrightarrow\Omega_{\mathbb{P}_0}((\sum_{i=0}^{n+1} a_i)+a_{0}+a_{1}).$$
In particular, $\Omega_{\mathbb{P}_0}(a_{0}+a_{1})$ is globally generated outside the line $\ell$ (resp. the vertex $p$).
Now that $X\subseteq\mathbb{P}_0$, from the standard exact sequence
$$0\to \mathcal{O}_X(-d) \to \Omega_{\mathbb{P}_0}|_X\to \Omega_X\to 0,$$
there is a surjection $\Omega_{\mathbb{P}_0}(a_{0}+a_{1})|_X\to \Omega_X(a_{0}+a_{1})$.
Note that $X$ intersects with  $\ell$ along finitely many points (resp. $X$ is disjoint with the vertex $p$). 
In particular, $\Omega_X(a_0+a_1)$ is globally generated away from finitely many points (resp. $\Omega_X(a_0+a_1)$ is globally generated).
The lemma is thus proved.
\end{proof}

\begin{proposition}\label{pro-top-chern-cal}
Let $X$ be a smooth well-formed weighted hypersurface of dimension $n\ge 2$ and of degree $d$ in a  weighted projective space $\mathbb{P}=\mathbb{P}(a_0,a_1,\cdots,a_{n+1})$ with $a_0\ge a_1\ge a_i=1$ for each $i\ge 2$.   
Suppose that $d\neq a:=a_0+a_1$. 
Then we have 
$$c_n(\Omega_X(a))=\mathcal{O}_X(1)^n\cdot \frac{d\prod_{i=0}^{n+1}(a-a_i)-a\prod_{i=0}^{n+1}(d-a_i)+(-1)^{n+1}(d-a)\prod_{i=0}^{n+1}a_i}{ad(a-d)}.$$
\end{proposition}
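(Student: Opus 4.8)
The plan is to reduce the statement to a one-variable generating-function computation in terms of $h:=c_1(\mathcal{O}_X(1))$. Since $\mathbb{P}$ is well-formed and $X$ is smooth, Remark~\ref{rem-locally-free-wps} places $X$ inside the smooth locus $\mathbb{P}_0$ of $\mathbb{P}$, so that $\mathcal{O}_X(k)$ is a genuine line bundle for all $k$ and $\Omega_X$ is locally free of rank $n$. First I would take total Chern classes in the two exact sequences already used in the proof of Lemma~\ref{lem-wps-gg}, namely the restricted (weighted) Euler sequence $0\to\mathcal{O}_X\to\bigoplus_{i=0}^{n+1}\mathcal{O}_X(a_i)\to T_{\mathbb{P}_0}|_X\to 0$ and the conormal sequence $0\to\mathcal{O}_X(-d)\to\Omega_{\mathbb{P}_0}|_X\to\Omega_X\to 0$. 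Using that $h^{n+1}=0$ in $H^*(X)$, these give
$$c(\Omega_X)=\frac{\prod_{i=0}^{n+1}(1-a_ih)}{1-dh},$$
so every $c_j(\Omega_X)$ lies in the subring generated by $h$; write $c_j(\Omega_X)=\gamma_j h^j$, where $\gamma_j$ is the coefficient of $t^j$ in the rational function $G(t):=\prod_{i=0}^{n+1}(1-a_it)/(1-dt)$.

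Next I would invoke the standard twisting identity $c_n(E\otimes L)=\sum_{j=0}^{n}c_j(E)\,c_1(L)^{n-j}$ (immediate from Chern roots) for the rank-$n$ bundle $E=\Omega_X$ and $L=\mathcal{O}_X(a)$, which yields
$$c_n(\Omega_X(a))=\Big(\textstyle\sum_{j=0}^{n}\gamma_j a^{n-j}\Big)h^n=\Big(\textstyle\sum_{j=0}^{n}\gamma_j a^{n-j}\Big)\mathcal{O}_X(1)^n.$$
Thus it remains to evaluate the rational number $\Sigma:=\sum_{j=0}^{n}\gamma_j a^{n-j}$, and the key observation is that $\Sigma$ is precisely the coefficient of $t^n$ in $\frac{1}{1-at}\,G(t)=\Phi(t)$, where
$$\Phi(t):=\frac{\prod_{i=0}^{n+1}(1-a_it)}{(1-at)(1-dt)}.$$

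Finally I would extract $[t^n]\Phi$ by partial fractions; this is where the hypothesis $d\neq a$ enters, since it makes $t=1/a$ and $t=1/d$ two distinct simple poles of $\Phi$. Writing $\Phi(t)=R(t)+\tfrac{A}{1-at}+\tfrac{B}{1-dt}$ with $\deg R=n$, the residue formula gives $A=\prod_i(a-a_i)/\big(a^{n+1}(a-d)\big)$ and $B=-\prod_i(d-a_i)/\big(d^{n+1}(a-d)\big)$, while the leading coefficient of $R$ is the ratio of the leading coefficients of numerator and denominator, namely $(-1)^n\prod_i a_i/(ad)$. Hence $\Sigma=[t^n]R+Aa^n+Bd^n=(-1)^n\tfrac{\prod a_i}{ad}+\tfrac{\prod(a-a_i)}{a(a-d)}-\tfrac{\prod(d-a_i)}{d(a-d)}$, and clearing denominators over $ad(a-d)$, together with $(-1)^n(a-d)=(-1)^{n+1}(d-a)$, produces exactly the asserted formula. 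I do not expect a conceptual obstacle: the only real work is the bookkeeping in this last step (pinning down $A$, $B$, and the leading term of $R$, then recombining the three fractions), together with the well-formedness input from Remark~\ref{rem-locally-free-wps} that legitimizes treating each $\mathcal{O}_X(k)$ as a line bundle and carrying out Chern-class calculus on $X$.
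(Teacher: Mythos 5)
Your proposal is correct and follows essentially the same route as the paper: both derive $c(\Omega_X(a))$ from the weighted Euler and conormal sequences (with well-formedness guaranteeing $\mathcal{O}_X(k)$ is invertible) and then extract the coefficient of $h^n$ from a degree-$(n+2)$-over-degree-$2$ rational generating function; your partial-fraction decomposition of $\Phi(t)$ at the two simple poles plus polynomial part is the same computation the paper performs via the global residue theorem applied to $\omega=\Phi(x)x^{-(n+1)}\,dx$ at $0$, $\infty$, and the two finite poles. The only cosmetic difference is that you twist after computing $c(\Omega_X)$ via the identity $c_n(E\otimes L)=\sum_j c_j(E)c_1(L)^{n-j}$, whereas the paper twists the exact sequences by $\mathcal{O}_X(a)$ first; all your intermediate constants check out against the stated formula.
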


\begin{proof}
Denote by $h=\mathcal{O}_X(1)$ the hyperplane section of $X$.
Notice that $X$ is contained in the smooth locus $\mathbb{P}_0$ of $\mathbb{P}$. 
Twisting the generalized Euler sequence and the conormal sequence by the locally free sheaf $\mathcal{O}_X(a)$, 
we have
\begin{align*}
0\to\Omega_{\mathbb{P}_0}(a)|_X\to \bigoplus_{i=0}^{n+1}\mathcal{O}_X(-a_i+a)\to\mathcal{O}_X(a)\to 0,\\
0\to \mathcal{O}_X(a-d)\to \Omega_{\mathbb{P}_0}^1(a)|_X\to\Omega_X(a)\to 0.
\end{align*} 
Then the Chern class 
$$c(\Omega_X(a))=\frac{\prod_{i=0}^{n+1}(1+(a-a_i)h)}{(1+(a-d)h)(1+ah)}.$$
Hence, the top Chern class $c_n(\Omega_X(a))$ is the product of $h^n$ and its coefficient, which is the residue of the following form
$$\omega=\frac{\prod_{i=0}^{n+1}(1+(a-a_i)x)}{(1+(a-d)x)(1+ax)x^{n+1}}dx$$
at the origin $x=0$.
By the residue theorem, we have
$$(*)~~\textup{Res}_{\infty}\omega+\textup{Res}_{\frac{1}{d-a}}\omega+\textup{Res}_{-1/a}\omega+\textup{Res}_0\omega=0.$$
Clearly, we have
$$\textup{Res}_{\infty}\omega%=\textup{Res}_0(-\frac{1}{x^2}\frac{x\prod_{i=0}^{n+1}(x+a-a_i)}{(x+a-d)(x+a)})
=\frac{\prod_{i=0}^{n+1}(a-a_i)}{(d-a)a},~~
\textup{Res}_{\frac{1}{d-a}}\omega=\frac{\prod_{i=0}^{n+1}(d-a_i)}{(a-d)d},~~\textup{Res}_{-1/a}\omega=\frac{(-1)^{n+1}\prod_{i=0}^{n+1}a_i}{ad}.$$
Therefore, applying $(*)$, we get the desired equality
$$c_n(\Omega_X(a))=h^n\cdot \frac{d\prod_{i=0}^{n+1}(a-a_i)-a\prod_{i=0}^{n+1}(d-a_i)+(-1)^{n+1}(d-a)\prod_{i=0}^{n+1}a_i}{ad(a-d)}.$$
We finish the proof of our proposition.
\end{proof}

\begin{proof}[Proof of Theorem \ref{thm-wps-ci}] 
Let $a:=a_0+a_1$. 
By Proposition \ref{pro-top-chern-cal}, we have
\begin{align*}
&c_n(\Omega_X(a))-\mathcal{O}_X(1)^n\cdot a^n\\
%&=\mathcal{O}_X(1)^n\cdot \frac{(-1)^n(d-a)a_0a_1+a\prod_{i=0}^{n+1}(d-a_i)-d\prod_{i=0}^{n+1}(a-a_i)}{ad(d-a)} -\mathcal{O}_X(1)^n\cdot a^n\\
&=\mathcal{O}_X(1)^n((d-1)^n-a^n)+\frac{\mathcal{O}_X(1)^na_0a_1}{ad(d-a)} (a(d-1)^n-d(a-1)^n+(-1)^n(d-a))\\
&\ge \frac{\mathcal{O}_X(1)^na_0a_1}{ad(d-a)} (a(d-1)^n-d(a-1)^n-(d-a)).
\end{align*}
Let $g(x)=a(x-1)^n-x(a-1)^n-(x-a)$. 
It is easy to show that $g(x)$ is monotonically increasing when $x\ge a+1$. 
Therefore, 
\begin{align*}
   g(d)\ge g(a+1)=a^{n+1}-(a+1)(a-1)^n-1>a^2(a^{n-1}-(a-1)^{n-1})-1 >0.
\end{align*}
So the first part of our theorem follows.  
The second part of our theorem follows from Lemmas \ref{lem-wps-gg} and \ref{lem-arv-prop2.1}. 
We finish the proof of our theorem.
\end{proof}

As an application of Theorem \ref{thm-wps-ci}, we shall prove Theorem \ref{main-del-pezzo-non}.

Let us first introduce the following proposition which confirms Conjecture \ref{main-conj-bounded} for a smooth complete intersection of two quadrics, i.e., a del Pezzo manifold of degree 4 (cf.~Theorem \ref{class-del-pezzo}). 
We note that the same strategy can be applied to the smooth complete intersection of more than two quadrics if their defining equations are ``diagonalizable''.

\begin{proposition}\label{pro-complete-intersection-bounded}
Let $X$ be a del Pezzo manifold of degree 4 and of dimension $n\ge 3$.
For any smooth projective variety $Y$ of dimension $n$ and of Picard number 1, there is a  number $N>0$ such that the degree of any finite morphism $Y\to X$
is no more than $N$.
\end{proposition}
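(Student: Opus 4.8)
The plan is to reduce the statement to the case of a smooth quadric, for which the boundedness property is already known by Amerik's theorem \cite{Ame07}, and then transfer the bound back along a finite cover. Recall that a del Pezzo manifold $X$ of degree $4$ and dimension $n\ge 3$ is, by Theorem \ref{class-del-pezzo}, a smooth complete intersection $X=Q_1\cap Q_2$ of two smooth quadrics in $\mathbb{P}^{n+2}$, with fundamental divisor $H=\mathcal{O}_X(1)$ very ample and of Picard number $1$. The key geometric observation is that the pencil of quadrics $\{\lambda Q_1+\mu Q_2\}$ spanned by $Q_1,Q_2$ contains a smooth member $Q$; indeed, the singular members form a proper closed subset of $\mathbb{P}^1$ (the discriminant), so a general member is a smooth quadric $Q\subseteq\mathbb{P}^{n+2}$ of dimension $n+1$, and $X\subseteq Q$ is a smooth divisor with $\mathcal{O}_Q(X)=\mathcal{O}_Q(2)$.

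First I would fix such a smooth quadric $Q\supseteq X$ and exploit the restriction map on cotangent sheaves. From the conormal sequence
\begin{equation*}
0\to \mathcal{O}_X(-2)\to \Omega_Q|_X\to \Omega_X\to 0,
\end{equation*}
together with the fact that $\Omega_Q(2)$ is globally generated (this is the classical statement used in \cite{Ame07}, and it is the analogue of Lemma \ref{lem-wps-gg} for the smooth quadric $Q$ of index $n+1\ge 4$; concretely one twists the Euler-type presentation $\bigoplus\mathcal{O}_Q\to T_Q\to 0$ coming from $Q\subseteq\mathbb{P}^{n+2}$), one gets a surjection $\Omega_Q(2)|_X\twoheadrightarrow\Omega_X(2)$, so $\Omega_X(2)=\Omega_X(2H)$ is globally generated on all of $X$. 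Thus the hypothesis "$\Omega_X(uH_X)$ globally generated outside finitely many points" of Lemma \ref{lem-arv-prop2.1} holds with $u=2$.

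Next I would verify the numerical inequality $c_n(\Omega_X(2H))-2^nH^n>0$ required by Lemma \ref{lem-arv-prop2.1}. Here I compute $c_n(\Omega_X(2))$ directly: from the two exact sequences
\begin{equation*}
0\to \Omega_Q(2)|_X\to \mathcal{O}_X(1)^{\oplus(n+3)}\to\mathcal{O}_X(2)\to 0,\qquad 0\to\mathcal{O}_X\to\Omega_Q(2)|_X\to\Omega_X(2)\to 0,
\end{equation*}
one reads off the total Chern class $c(\Omega_X(2))=\dfrac{(1+h)^{n+3}}{(1+2h)^2}$ in terms of $h=\mathcal{O}_X(1)$ with $h^n=H^n=4$, and the residue computation (identical in spirit to Proposition \ref{pro-top-chern-cal}, with the pencil of quadrics replaced by the single smooth ambient quadric) yields the coefficient of $h^n$. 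It then remains to check the elementary estimate that this coefficient strictly exceeds $2^n$ for every $n\ge 3$; I expect this to follow from a monotonicity argument in $n$ after separating the binomial main term $(h+\text{lower})$ from the correction, exactly as in the last step of the proof of Theorem \ref{thm-wps-ci}. Granting this, Lemma \ref{lem-arv-prop2.1} produces the desired bound $N$ on $\deg(Y\to X)$ depending only on $X$ and $Y$, and the non-existence of a non-isomorphic endomorphism follows by taking $Y=X$.

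The main obstacle is the choice and use of the smooth quadric $Q$ in the pencil: one must be sure that $X$ genuinely sits as a divisor of class $2H$ inside a \emph{smooth} $Q$ (so that $\Omega_Q$ is a vector bundle and the global generation of $\Omega_Q(2)$ is available), and that the Chern-class bookkeeping is carried out on $X$ rather than on $Q$ — in particular keeping track of $\mathcal{O}_X(X)=\mathcal{O}_X(2)$ correctly in the conormal sequence. The remaining numerical inequality, while it needs care, is of the same routine type as the one already dispatched in Theorem \ref{thm-wps-ci} and should present no essential difficulty.
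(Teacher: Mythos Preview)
Your approach has a genuine gap: the numerical inequality $c_n(\Omega_X(2H))>2^nH^n$ required by Lemma~\ref{lem-arv-prop2.1} is \emph{false} for a smooth $(2,2)$-complete intersection, so the argument cannot close. First, your Chern-class formula is off by one factor: from the twisted Euler sequence on $\mathbb{P}^{n+2}$ restricted to $X$ and the two conormal sequences (for $Q\subseteq\mathbb{P}^{n+2}$ and for $X\subseteq Q$), one gets
\[
c(\Omega_X(2))\;=\;\frac{c(\mathcal{O}_X(1))^{\,n+3}}{c(\mathcal{O}_X(2))\cdot c(\mathcal{O}_X)\cdot c(\mathcal{O}_X)}\;=\;\frac{(1+h)^{n+3}}{(1+2h)},
\]
not $(1+h)^{n+3}/(1+2h)^2$. (Your first displayed sequence places $\Omega_Q(2)|_X$ where $\Omega_{\mathbb{P}^{n+2}}(2)|_X$ should sit; the conormal step for $Q\subseteq\mathbb{P}^{n+2}$ is missing.) A residue computation as in Proposition~\ref{pro-top-chern-cal} then gives the coefficient of $h^n$ as
\[
\frac{2(n+1)(n+3)+1+(-1)^n}{8},
\]
so with $H^n=4$ one has $c_n(\Omega_X(2))=\tfrac{1}{2}\bigl(2(n+1)(n+3)+1+(-1)^n\bigr)$, whereas $2^nH^n=2^{n+2}$. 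For $n=3$ this reads $24<32$; for $n=4$ it reads $36<64$; and in general a quadratic cannot dominate $2^{n+3}$. So the hypothesis of Lemma~\ref{lem-arv-prop2.1} fails for every $n\ge 3$, and the ``routine'' monotonicity step you expected is the step that actually breaks---note that in Theorem~\ref{thm-wps-ci} the standing assumption $d\ge a_0+a_1+1$ is exactly what makes the analogous estimate go through, and it excludes quadrics.

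The paper's proof takes a different reduction to Amerik's theorem. Using Reid's simultaneous diagonalization, one writes $X=\{\sum x_i^2=0\}\cap\{\sum\lambda_ix_i^2=0\}\subseteq\mathbb{P}^{n+2}$ with the $\lambda_i$ pairwise distinct, and the coordinate projection dropping $x_i$ realizes $X$ as a \emph{double cover} $\pi_i:X\to Q^n$ of a smooth $n$-dimensional quadric $\{\sum_{j\neq i}(\lambda_j-\lambda_i)x_j^2=0\}\subseteq\mathbb{P}^{n+1}$. Composing any finite $Y\to X$ with $\pi_i$ yields a finite morphism to a smooth quadric of dimension $n\ge 3$, and \cite{Ame07} bounds its degree; this immediately bounds $\deg(Y\to X)$. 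In other words, the right use of the pencil is not to sit $X$ inside an ambient $(n+1)$-dimensional quadric, but to project $X$ \emph{onto} a smooth quadric of the same dimension.
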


\begin{proof}
After a suitable change of basis, we may assume that $X\subseteq\mathbb{P}^{n+2}$ is given by the complete intersection of two smooth quadrics $Q_1$ and $Q_2$ of dimension $n+1$
$$Q_1:\sum_{i=0}^{n+2}x_i^2=0,~~~Q_2:\sum_{i=0}^{n+2}\lambda_ix_i^2=0$$
with $\lambda_i\neq\lambda_j$ if $i\neq j$ (cf.~\cite[Proposition 2.1]{Rei72}). 
Then the projection along the $i$-th coordinate is a double cover
$$\pi_i: X\to Q^n$$
where $Q^n\subseteq\mathbb{P}^{n+1}$ is a smooth quadric with the defining equation $\sum_{j\neq i}(\lambda_j-\lambda_i)x_j^2=0$. 
Then any finite surjective morphism $Y\to X$ induces a finite surjective morphism $Y\to Q^n$ and hence our proposition follows from \cite{Ame07}. 
\end{proof}

\begin{remark}
 We give a remark of the proof of Proposition \ref{pro-complete-intersection-bounded} in terms of the branch locus. 
  Let us fix the projection along the \(0\)-th coordinate \(\pi=\pi_0:X\to Q^n\).
  Then it is a double cover which is totally ramified along the prime divisor \(R_{\pi}=X\cap\{x_0=0\}\).
  The branch divisor  \(B_{\pi}=Q^n\cap \{\sum_{j=1}^{n+2}x_j^2=0\}\) is a degree 2 hypersurface on \(Q^n\) and we have the ramification divisor formula \(K_X=\pi^*(K_{Q^n}+\frac{1}{2}B_{\pi})\).  
\end{remark}

\begin{proof}[Proof of Theorem \ref{main-del-pezzo-non}]
Let $H$ be the ample generator of $\textup{Pic}(X)$. 
In the view of \cite{ARV99} or \cite{HM03}, we may assume that $n\ge 4$.
We  apply Theorem \ref{class-del-pezzo}. %and use the notation therein.
Since $\rho(X)=1$, we have $d:=H^n\le 5$. 
If $H^n=5$, then $n\le 6$ and it is known that $X$ is homogeneous ($n=6$) or almost homogeneous  ($n=4$ or $5$); see \cite[Proposition 2.3]{FM19}, in particular, our theorem follows from \cite[Proposition 2]{PS89} and \cite[Theorem 1.4]{HN11}. 
If $H^n=4$, then  our theorem follows from Proposition \ref{pro-complete-intersection-bounded}.
If $H^n=3$ (resp. 2, 1), then $X$ is a smooth weighted hypersurface of degree 3 (resp. 4, 6) in $\mathbb{P}(1^{\oplus (n+2)})$ (resp. $\mathbb{P}(2,1^{\oplus (n+1)})$, $\mathbb{P}(3, 2,1^{\oplus n})$), and Theorem \ref{thm-wps-ci} implies our result.
So Theorem \ref{main-del-pezzo-non} is proved.
\end{proof}

We end up this section with the following remark on Mukai manifolds (cf.~\cite{Muk89} and \cite[Chapter 5 and Corollary 2.1.17]{IP99}).
\begin{remark}[A first glance at Mukai manifolds]\label{rem-mukai-rho=1}
One can continue to study  Conjecture \ref{main-conj-pn} according to the index of the Fano manifold $X$.
The next step is to consider the Mukai manifolds, i.e.,  the index  is $\dim(X)-2$.
In this case, the genus $g=g(X)$  plays a significant role (cf.~Notation \ref{notation2.1}). 
By the classification of Mukai, it has been shown that $2\le g\le 12$ and $g\neq 11$. 
From the viewpoints of the authors, when  the genus is small,  the tangent bundle of $X$ does not have strong positivity, in which case, similar strategies  like Theorem \ref{thm-wps-ci} and Proposition \ref{pro-complete-intersection-bounded} in this section  work. 
For example, if $g=4$, then $X$ is a complete intersection of a quadric and a cubic in $\mathbb{P}^{n+2}$ and then a generalized version of Theorem \ref{thm-wps-ci} shows that Conjecture \ref{main-conj-bounded} (and hence Conjecture \ref{main-conj-pn}) holds for such $X$.
If $g=3$, then either $X$ is a quartic hypersurface in $\mathbb{P}^{n+1}$ or $X$ is a double cover  over a smooth quadric $Q$ (of dimension $\ge 3$); in the former (resp. latter) case, Theorem \ref{thm-wps-ci} (resp. Proposition \ref{pro-complete-intersection-bounded})  concludes that Conjecture \ref{main-conj-bounded} (and hence Conjecture \ref{main-conj-pn}) is true for $X$. 
If $g=2$, then $X$ is a degree 6 hypersurface in the weighted projective space $\mathbb{P}(3,1^{\oplus(n+1)})$, and then  Theorem \ref{thm-wps-ci}  implies Conjecture \ref{main-conj-bounded} (and hence Conjecture \ref{main-conj-pn}) is true in this case.  
However, when  $g\ge 6$,  the tangent bundles are expected to be a bit positive.
Indeed, in most of the cases $6\le g\le 12$, $X$ is a linear section of some rational homogeneous variety, in which circumstance, Theorem \ref{thm-wps-ci} seems not to work any more.
\end{remark}

\section{Boundedness for morphisms onto del Pezzo fourfolds of degree 5}
In this section, we verify Conjecture \ref{main-conj-bounded} for del Pezzo fourfolds of degree 5. 
The following proposition is our main result of this section. 
\begin{proposition}[{cf.~\cite[Proposition 2.2]{Ame97}}]\label{pro-quintic-4fold}
Let $X$ be a del Pezzo fourfold of degree 5.   
Then for any smooth projective fourfold $Y$ with $b_2(Y)=1$, there is a positive number $N$ such that any finite morphism $Y\to X$ has the degree no more than $N$. 
\end{proposition}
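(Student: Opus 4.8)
The plan is to realize a del Pezzo fourfold $X$ of degree $5$ inside the Grassmannian $\mathrm{Gr}(2,5)$ and then produce enough sections of a suitable twist of $\Omega_X$ so that Lemma \ref{lem-arv-prop2.1} applies. By Theorem \ref{class-del-pezzo}, $X=\mathrm{Gr}(2,5)\cap\mathbb{P}^7$ is a codimension $2$ linear section under the Pl\"ucker embedding $\mathrm{Gr}(2,5)\hookrightarrow\mathbb{P}^9$, and the ample generator $H$ of $\mathrm{Pic}(X)\cong\mathbb{Z}$ is the restriction $\mathcal{O}_X(1)$ of the Pl\"ucker hyperplane class; note $-K_X\sim 3H$, $H^4=5$. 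First I would write the Euler-type sequence for $G=\mathrm{Gr}(2,5)$ in terms of the tautological sub- and quotient bundles, $0\to\mathcal{S}\to\mathcal{O}_G^{\oplus 5}\to\mathcal{Q}\to 0$, so that $\Omega_G\cong\mathcal{S}\otimes\mathcal{Q}^\vee$ (with $\det=\mathcal{O}_G(-5)$, matching $i(G)=5$). Twisting by $\mathcal{O}_G(2)$ and restricting to $X$, together with the two conormal sequences $0\to\mathcal{O}_X(-1)^{\oplus 2}\to\Omega_G|_X\to\Omega_X\to 0$, gives a surjection from a direct sum of globally generated bundles onto $\Omega_X(2)$: indeed $\mathcal{S}^\vee\otimes\mathcal{Q}^\vee\otimes\mathcal{O}_G(2)$ is globally generated on $G$ (it is a quotient of $(\wedge^2\mathcal{O}_G^{\oplus 5})\otimes\mathcal{Q}^\vee$ after identifying $\mathcal{O}_G(1)=\det\mathcal{S}^\vee$ and using that $\mathcal{Q}^\vee\otimes\mathcal{O}_G(1)=\mathcal{S}^\vee\otimes\mathcal{Q}^\vee$ when the rank of $\mathcal S$ is $2$; concretely $\mathcal S^\vee\otimes\mathcal O_G(1)=\mathcal S$ up to twist), hence so is its restriction to $X$, and $\Omega_X(2)$ is therefore globally generated — in particular globally generated outside finitely many points, so the hypothesis of Lemma \ref{lem-arv-prop2.1} with $u=2$ is in place.

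Next I would verify the numerical inequality $c_4(\Omega_X(2))-2^4H^4>0$, i.e. $c_4(\Omega_X(2))>80$. The plan is a Chern-class computation entirely on $\mathrm{Gr}(2,5)$: starting from $c(\Omega_G)$ expressed via the Chern roots of $\mathcal{S}$ and $\mathcal{Q}$ (or equivalently via Schubert classes $\sigma_1,\sigma_{1,1},\sigma_2,\dots$ on $G$), twist to get $c(\Omega_G(2))$, then cut by $H^2$ and correct by the conormal contribution $1/(1+H)^2$ to obtain $c(\Omega_X(2))$; finally extract the degree-$4$ term and evaluate against the fundamental class using $\int_X H^4=\int_G \sigma_1^2\cdot(\text{top})=5$ and the standard Schubert intersection numbers on $\mathrm{Gr}(2,5)$. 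I expect a comfortably positive answer (the tangent bundle of a degree-$5$ del Pezzo fourfold is quite positive, as flagged in Remark \ref{rem-mukai-rho=1} for the "large genus/large degree" regime), so the inequality should hold with room to spare; this is the kind of routine calculation I would not grind through in the sketch.

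With global generation of $\Omega_X(2H_X)$ away from finitely many points and $c_4(\Omega_X(2H_X))-2^4H_X^4>0$ established, Lemma \ref{lem-arv-prop2.1} (applied with $H_X=H$ the Pl\"ucker generator and $H_Y$ the ample generator of $\mathrm{N}^1(Y)$ for an arbitrary smooth projective fourfold $Y$ with $b_2(Y)=1$, so $\rho(Y)=1$) immediately yields a bound $N=N(X,Y)$ on $\deg(f)=m^4 H_Y^4/H_X^4$ for every finite surjective $f:Y\to X$, which is exactly the assertion of Proposition \ref{pro-quintic-4fold}. (The original argument of Amerik in \cite[Proposition 2.2]{Ame97} proceeds similarly; the extension of \cite[Lemma 1.1]{ARV99} in our Lemma \ref{lem-arv-ggg} to sheaves globally generated only away from finitely many points is what lets us ignore the locus where $\mathcal{O}_X(1)$-twists behave badly, although here $\Omega_X(2)$ is in fact honestly globally generated.)

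\textbf{Main obstacle.} The delicate point is not the inequality but the global generation step: one must correctly identify $\Omega_G$ on $G=\mathrm{Gr}(2,5)$ with $\mathcal S\otimes\mathcal Q^\vee$ and check that the twist $\Omega_G(2)$ is generated by global sections (equivalently, that $\mathcal S\otimes\mathcal Q^\vee\otimes\mathcal O_G(2)$ is a quotient of a trivial bundle), and then propagate this through the two restriction/conormal sequences without losing positivity — the twist $2H=\tfrac23(-K_X)$ is exactly the threshold that makes this work, and a smaller twist would fail. Everything else (the Schubert computation of $c_4$, and the final invocation of Lemma \ref{lem-arv-prop2.1}) is mechanical.
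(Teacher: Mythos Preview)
Your approach has a fatal numerical gap: the inequality $c_4(\Omega_X(2H))>2^4H^4$ is \emph{false} for the degree-$5$ del Pezzo fourfold. A direct Schubert computation on $G=\mathrm{Gr}(2,5)$ (using $c(T_G)$ and the two conormal sequences) gives, for $X=V_5$, the Chern numbers
\[
H^4=5,\quad H^3c_1=15,\quad H^2c_2=22,\quad Hc_3=16,\quad c_4=6
\]
(these satisfy the Todd relation $-c_1^4+4c_1^2c_2+3c_2^2+c_1c_3-c_4=720$, which is a good check). Hence
\[
c_4(\Omega_X(2H))=16H^4-8H^3c_1+4H^2c_2-2Hc_3+c_4=80-120+88-32+6=22<80.
\]
More generally, $c_4(\Omega_X(uH))-u^4H^4=-15u^3+22u^2-16u+6$, which is already $-3$ at $u=1$ and strictly decreasing for $u\ge1$; so there is \emph{no} twist for which Lemma~\ref{lem-arv-prop2.1} applies. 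Your global-generation step for $\Omega_X(2)$ is fine (indeed $\Omega_G(2)\cong\mathcal S^\vee\otimes\wedge^2\mathcal Q$ is globally generated), but the ``comfortably positive'' expectation is exactly backwards: Remark~\ref{rem-mukai-rho=1} says that for linear sections of rational homogeneous varieties the method of Theorem~\ref{thm-wps-ci} ``seems not to work any more'', not that it works with room to spare.

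This is precisely why the paper abandons the Chern-class route for $V_5$ and instead adapts Amerik's original argument: it exploits lines $\ell$ of the \emph{second kind} (with $\mathcal N_{\ell/X}\cong\mathcal O(1)^{\oplus 2}\oplus\mathcal O(-1)$), which cover a divisor on $X$ (Proposition~\ref{del-pezzo-fourfold-deg5-negative}). One chooses such an $\ell$ whose preimage is generically transverse to the rank-$\le 2$ locus of $df$, cuts $X$ by two general hyperplanes through $\ell$ to a degree-$5$ del Pezzo surface $S_1$ with $\mathcal N_{\ell/S_1}=\mathcal O(-1)$, and shows $S_2:=f^{-1}(S_1)$ has only isolated singularities. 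The split normal-bundle sequence and Braun's criterion then force $C:=f^*\ell$ to extend to the second infinitesimal neighbourhood $S_2^{(2)}$; but for $\deg f\gg0$ one has $h^1(S_2,\Omega_Y|_{S_2})=1$, so the image of $\mathrm{Pic}(S_2^{(2)})\to\mathrm{N}^1(S_2)$ is one-dimensional and $C\equiv\lambda H_Y|_{S_2}$ is numerically ample on $S_2$ --- contradicting $(C^2)_{S_2}=\deg\mathcal N_{C/S_2}<0$. The negative summand of the second-kind normal bundle is the engine of the proof; nothing of this survives in the cotangent-twist approach.
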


We do some preparations before proving Proposition \ref{pro-quintic-4fold}. 
First, the result below is  borrowed from \cite{Pro94} and \cite{Chu23} (cf.~Lemma \ref{lem-lines-del}).
%We reformulate it here for the convenience of the proof of Proposition \ref{pro-quintic-4fold}.

\begin{proposition}[{\cite[Proposition 2.2]{Pro94}, \cite[Proof of Corollary 3.2]{Chu23}}]\label{del-pezzo-fourfold-deg5-negative}
Let $X$ be a del Pezzo fourfold of degree 5.
Then the  lines of the second kind cover a  divisor on $X$.
\end{proposition}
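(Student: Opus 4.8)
The plan is to analyse the family $\mathcal{K}$ of lines on $X$, to isolate inside it the closed locus $\mathcal{K}_2$ of lines of the second kind, and then to bound from above and from below the dimension of the locus $D\subseteq X$ these lines sweep out; the statement is exactly $\dim D=3$. By Theorem \ref{class-del-pezzo}, $X\subseteq\mathbb{P}^7$ has $\rho(X)=1$, index $i(X)=3$, $-K_X\sim 3H$ with $H$ very ample, and $X$ is covered by lines. Since $-K_X\cdot C=3H\cdot C\ge 3$ for every curve, with equality precisely for lines, the lines form a minimal dominating family; for a general $x$ the family $\mathcal{K}_x$ of lines through $x$ has dimension $p=-K_X\cdot\ell-2=1$, hence $\dim\mathcal{K}=\dim X+p-1=4$, and by Lemma \ref{lem-lines-del} a general line has $\mathcal{N}_{\ell/X}\cong\mathcal{O}_\ell^{\oplus 2}\oplus\mathcal{O}_\ell(1)$, the only other possibility being the second kind $\mathcal{N}_{\ell/X}\cong\mathcal{O}_\ell(-1)\oplus\mathcal{O}_\ell(1)^{\oplus 2}$. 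Write $e\colon\mathcal{U}\to X$ and $q\colon\mathcal{U}\to\mathcal{K}$ for the (normalized) universal family, $\mathcal{K}_2\subseteq\mathcal{K}$ for the closed locus of second-kind lines, $\mathcal{U}_2=q^{-1}(\mathcal{K}_2)$, and $D=e(\mathcal{U}_2)$.

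For the upper bound $\dim D\le 3$, I would first show $D\neq X$. By generic smoothness $e$ is smooth over a dense open $X_0\subseteq X$; set $Z=X\setminus X_0$. If a second-kind line $\ell$ were not contained in $Z$, pick $x\in\ell\cap X_0$; then $e$ is smooth at $(x,[\ell])$, so $de_{(x,[\ell])}$ is surjective, which forces the evaluation $H^0(\mathcal{N}_{\ell/X})\to\mathcal{N}_{\ell/X}\otimes k(x)$ to be surjective, hence $\mathcal{N}_{\ell/X}$ has no summand of negative degree, contradicting that $\ell$ is of the second kind. Thus every second-kind line lies in the proper closed set $Z$, so $D\subseteq Z$ and $\dim D\le 3$. (Conceptually: a second-kind line has $T_X|_\ell\cong\mathcal{O}(2)\oplus\mathcal{O}(1)^{\oplus 2}\oplus\mathcal{O}(-1)$, is non-free, and therefore avoids a general point of $X$.)

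The real work is the lower bound $\dim D\ge 3$. Abstractly $\mathcal{K}_2$ is a degeneracy locus inside the $4$-fold $\mathcal{K}$ whose \emph{expected} codimension is $2$ (the splitting type $(1,1,-1)$ lies at ``distance'' $2$ from the generic type $(1,0,0)$), so a priori one only obtains $\dim\mathcal{K}_2\ge 2$, and one must exhibit an honest $3$-dimensional family of second-kind lines. My approach would be to pass to hyperplane sections: for $H$ in a dense open $W\subseteq\check{\mathbb{P}}^7$, the section $X_H=X\cap H$ is a smooth del Pezzo threefold of degree $5$ (Bertini; indeed $-K_{X_H}=2H|_{X_H}$ and $\rho(X_H)=1$), and it is classical that its lines of the second kind form a $1$-dimensional family (the ``conic'' in the $\mathbb{P}^2$ of lines on such a threefold — the three-dimensional analogue of the present statement). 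Since $X_H$ is a smooth member of $|H|$, for $\ell\subseteq X_H$ one has $\mathcal{N}_{\ell/X}\cong\mathcal{N}_{\ell/X_H}\oplus\mathcal{O}_\ell(1)$, so a second-kind line of $X_H$ (normal bundle $\mathcal{O}_\ell(1)\oplus\mathcal{O}_\ell(-1)$, by Lemma \ref{lem-lines-del} with $n=3$) is a second-kind line of $X$. Counting on the incidence variety $\mathcal{I}=\{(H,\ell):H\in W,\ \ell\subseteq X_H\text{ a second-kind line of }X_H\}$: the first projection $\mathcal{I}\to W$ has $1$-dimensional fibres, so $\dim\mathcal{I}=8$, while the second projection maps $\mathcal{I}$ into $\mathcal{K}_2$ with fibre over $\ell$ a dense open of the linear space $\{H:\ell\subseteq H\}\cong\mathbb{P}^5$; hence $\dim\mathcal{K}_2\ge 8-5=3$. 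Together with $\mathcal{K}_2\neq\mathcal{K}$ from the previous paragraph this gives $\dim\mathcal{K}_2=3$, so $\dim\mathcal{U}_2=4$; and since a surface in $\mathbb{P}^7$ carries at most a $2$-dimensional family of lines, the image $D=e(\mathcal{U}_2)$ cannot have dimension $\le 2$, so $\dim D=3$, which is the assertion.

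The main obstacle is precisely this lower bound: the second-kind locus is a degeneracy locus of expected codimension $2$, so the naive count predicts only a surface, and one genuinely needs input on the geometry of $X$ — here the known structure of lines on its hyperplane sections, the del Pezzo threefolds of degree $5$ — to see that $\mathcal{K}_2$ has excess dimension and that $D$ is in fact a divisor. (The auxiliary facts that $X$ is covered by lines with a one-dimensional family through a general point, and that the general line is of the first kind, are standard; compare \cite[Proposition 2.2]{Pro94} and \cite[Proof of Corollary 3.2]{Chu22}.)
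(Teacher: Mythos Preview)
Your upper bound is fine: second-kind lines are non-free, so their sweeping locus is a proper closed subset. The gap is in the lower bound, precisely at the sentence ``Since $X_H$ is a smooth member of $|H|$, for $\ell\subseteq X_H$ one has $\mathcal{N}_{\ell/X}\cong\mathcal{N}_{\ell/X_H}\oplus\mathcal{O}_\ell(1)$.'' What you actually have is the exact sequence
\[
0\longrightarrow \mathcal{N}_{\ell/X_H}\longrightarrow \mathcal{N}_{\ell/X}\longrightarrow \mathcal{N}_{X_H/X}|_\ell\cong\mathcal{O}_\ell(1)\longrightarrow 0,
\]
and for $\mathcal{N}_{\ell/X_H}\cong\mathcal{O}_\ell(1)\oplus\mathcal{O}_\ell(-1)$ the extension group $\textup{Ext}^1(\mathcal{O}_\ell(1),\mathcal{O}_\ell(1)\oplus\mathcal{O}_\ell(-1))\cong H^1(\mathcal{O}_\ell(-2))\cong\mathbb{C}$ is nonzero; the nonsplit extension is exactly $\mathcal{O}_\ell(1)\oplus\mathcal{O}_\ell^{\oplus 2}$, the \emph{first} kind. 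Conversely, if $\ell$ is first kind on $X$, then for those $H\supset\ell$ for which the induced surjection $\mathcal{O}_\ell^{\oplus 2}\oplus\mathcal{O}_\ell(1)\twoheadrightarrow\mathcal{O}_\ell(1)$ kills the canonical $\mathcal{O}_\ell(1)$-summand (a codimension-one linear condition on the $\mathbb{P}^5$ of such $H$), the kernel is $\mathcal{O}_\ell(1)\oplus\mathcal{O}_\ell(-1)$ by the Euler sequence, and $\ell$ is second kind on $X_H$. So ``second kind on $X_H$'' does \emph{not} imply ``second kind on $X$'', and your second projection $\mathcal{I}\to\mathcal{K}$ does not land in $\mathcal{K}_2$.

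This breaks your dimension count: the piece of $\mathcal{I}$ over first-kind lines already has dimension $4+4=8$, matching $\dim\mathcal{I}$, so nothing forces $\dim\mathcal{K}_2\ge 3$. (In fact $\mathcal{I}$ is irreducible via the first projection, and the locus over $\mathcal{K}\setminus\mathcal{K}_2$ is open in it, so the count over $\mathcal{K}_2$ gives no lower bound at all.) The paper does not try to argue abstractly; it invokes the explicit geometry of the quintic del~Pezzo fourfold from \cite{Pro94}: $X$ carries a unique $\sigma_{2,2}$-plane $S$ and a pencil of $\sigma_{3,1}$-planes $P_t$, whose union is a fixed singular hyperplane section $R$, and each $P_t$ meets $S$ in a line tangent to a fixed conic $C\subset S$. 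By \cite{Chu22}, a line is of the second kind iff it meets $C$ in a point; since every line in a $P_t$ through a point of $C$ is of this form, the $P_t$'s, hence the divisor $R$, are swept out by second-kind lines. If you want to salvage your hyperplane-section approach you would have to control the extension class above, which amounts to exactly this kind of explicit input.
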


\begin{proof}
By \cite[Proposition 2.2]{Pro94},  $X$ contains exactly one plane $S$ which is a Schubert variety of type $\sigma_{2,2}$, and a 1-parameter family of planes $P_t$ that are Schubert varieties of type $\sigma_{3,1}$.
Besides, each  $P_t$ intersects the unique $S$ along a line tangent to a fixed  conic $C\subseteq S$.
In addition, the union of all $\sigma_{3,1}$-planes $P_t$ on $X$ is a singular hyperplane section $R$. % with its singular locus along $S$. 
Moreover, if $\ell\subseteq R$ is a line, then $\ell\cap S\neq\emptyset$ and either $\ell$ is contained in $S$ or $\ell$ is contained in some $P_t$. 
By \cite[Corollary 3.2]{Chu23}, a line  is of the second kind if and only if it intersects  the conic $C$ along a single point.
In particular, every $P_t$  is covered by the lines of the second kind. % (passing through the unique tangent point with $C$).
So the lines of the second kind covers the prime divisor $R$.
\end{proof}

The following Lemmas \ref{lem-tangent-singular} and \ref{lem-bertini} are well-known to experts. 
\begin{lemma}\label{lem-tangent-singular}
Let $f:Y\to X\subseteq \mathbb{P}^n$ be a finite morphism of non-singular quasi-projective varieties and $H$ a hyperplane  not containing $X$. 
Then $f^{-1}(X\cap H)$ is non-singular at a point $y\in Y$ if and only if $f_*(T_{Y,y})$ is not contained in $H$.
\end{lemma}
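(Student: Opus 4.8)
The statement is essentially local and amounts to understanding when the scheme-theoretic preimage $f^{-1}(X\cap H)$, cut out on $Y$ by the single equation $f^*(\ell)$ (where $\ell$ is a linear form defining $H$), fails to be a smooth divisor at a point $y$. The plan is to work in the local rings: fix $y\in Y$, set $x=f(y)\in X$, and let $\ell$ be a linear form on $\mathbb{P}^n$ with $H=\{\ell=0\}$. Since $X\not\subseteq H$, the restriction $\ell|_X$ is a nonzero element of $\mathcal{O}_{X,x}$, hence $f^*(\ell|_X)$ is a nonzero element $g\in\mathcal{O}_{Y,y}$ (here finiteness of $f$, in particular dominance on each component, guarantees $f^\#$ is injective so $g\neq0$). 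The divisor $D:=f^{-1}(X\cap H)$ is then $\mathrm{Spec}\,(\mathcal{O}_{Y,y}/(g))$ locally, and since $Y$ is non-singular (hence $\mathcal{O}_{Y,y}$ is a regular local ring) and $D$ has pure codimension one, $D$ is non-singular at $y$ if and only if $g\notin\mathfrak{m}_{Y,y}^2$, i.e. the image of $g$ in the Zariski cotangent space $\mathfrak{m}_{Y,y}/\mathfrak{m}_{Y,y}^2$ is nonzero.

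Next I would translate the condition $g\notin\mathfrak{m}_{Y,y}^2$ into the language of tangent spaces. The differential $dg_y\in (\mathfrak{m}_{Y,y}/\mathfrak{m}_{Y,y}^2)^\vee=T_{Y,y}$... more precisely, $g$ determines a linear functional on $T_{Y,y}$, and $g\in\mathfrak{m}_{Y,y}^2$ exactly when this functional vanishes identically on $T_{Y,y}$. Now $g=f^*(\ell|_X)$ and, by functoriality of the cotangent space under the composite $Y\xrightarrow{f}X\hookrightarrow\mathbb{P}^n$, the functional on $T_{Y,y}$ induced by $g$ is the pullback along $df_y:T_{Y,y}\to T_{X,x}\subseteq T_{\mathbb{P}^n,x}$ of the functional induced by $\ell$ on $T_{\mathbb{P}^n,x}$. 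The functional on $T_{\mathbb{P}^n,x}$ induced by the linear form $\ell$ (with $x\notin H$, which holds as $\ell(x)\neq0$ could fail—careful: we only know $X\not\subseteq H$, not $x\notin H$; if $x\in H$ the functional induced by $\ell|_X$ on $T_{X,x}$ is exactly the equation of the hyperplane $T_{(X\cap H),x}\subseteq T_{X,x}$, and if $x\notin H$ then $\ell|_X$ is a unit and $D$ does not pass through $y$ at all, so the asserted equivalence holds trivially) cuts out precisely the hyperplane subspace $H\cap T_{X,x}$ viewed inside the embedded tangent space. Hence $dg_y$ vanishes on all of $T_{Y,y}$ if and only if $df_y(T_{Y,y})$ lies in that hyperplane, i.e. $f_*(T_{Y,y})\subseteq H$ (as embedded projective tangent spaces/affine cones). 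Assembling these equivalences gives the claim.

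The only real subtlety, and the step I expect to need the most care, is the bookkeeping between \emph{affine} tangent spaces (subspaces of $\mathbb{C}^{n+1}$, or of $T_{\mathbb{P}^n,x}$) and the \emph{embedded projective tangent space} $\mathbb{T}_xX\subseteq\mathbb{P}^n$, together with the degenerate-looking case $x\in H$ versus $x\notin H$; one must phrase ``$f_*(T_{Y,y})$ is not contained in $H$'' so that it means the embedded projective tangent space $f(\text{tangent directions})$ is not contained in the linear subspace $H$, which is automatic when $x\notin H$. Once that convention is pinned down, the argument is just the chain of iff's above: regularity of $\mathcal{O}_{Y,y}$ $\Longleftrightarrow$ $g\notin\mathfrak m^2$ $\Longleftrightarrow$ $dg_y\not\equiv0$ on $T_{Y,y}$ $\Longleftrightarrow$ $(df_y)^\vee$ applied to the equation of $H$ is nonzero $\Longleftrightarrow$ $df_y(T_{Y,y})\not\subseteq H$. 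I would present it in two or three lines of local-ring computation plus a sentence fixing the tangent-space conventions, citing e.g. the Jacobian criterion from \cite[Chapter I]{Har77} for the equivalence ``$D$ smooth at $y$ $\iff$ $g\notin\mathfrak m_{Y,y}^2$''.
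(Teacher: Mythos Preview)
Your proposal is correct and follows essentially the same route as the paper: both arguments reduce smoothness of $f^{-1}(X\cap H)$ at $y$ to the condition $f^*(s)\notin\mathfrak{m}_y^2$ in the regular local ring $\mathcal{O}_{Y,y}$, and then dualize via the cotangent map $f^*:\mathfrak{m}_x/\mathfrak{m}_x^2\to\mathfrak{m}_y/\mathfrak{m}_y^2$ to obtain the equivalent condition $f_*T_{Y,y}\not\subseteq H$. Your extra discussion of the case $x\notin H$ and of embedded versus Zariski tangent spaces is a reasonable precaution, though the paper simply works with the section $s\in|\mathcal{O}_X(1)|$ and writes the final condition as $f_*T_{Y,y}\subseteq Z(s_x)=T_{X,x}\cap H$.
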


\begin{proof}
Let $X_1:=X\cap H$ and $Y_1:=f^{-1}(X_1)$.
Then $Y_1$ is a hypersurface defined by $Z(f^*(s))$ where $s\in|\mathcal{O}_X(1)|$ and $X_1=Z(s)$. 
Then $Y_1$ is smooth along $y\in Y_1$ if and only if $\mathcal{O}_{Y_1,y}=\mathcal{O}_{Y,y}/f^*(s)_y$ is a regular local ring if and only if $f^*(s)_y\not\in\mathfrak{m}_y^2$. 
Let $x=f(y)$ be the image. 
Since $f$ induces a natural cotangent map $f^*:\mathfrak{m}_x/\mathfrak{m}_x^2\to \mathfrak{m}_y/\mathfrak{m}_y^2$,
$f^*(s_x)=0$ in $\mathfrak{m}_y/\mathfrak{m}_y^2$ if and only if for any tangent vector $v\in T_{Y,y}=(\mathfrak{m}_y/\mathfrak{m}_y^2)^{\vee}$, we have $v(f^*(s_x))=(f_*v)(s_x)=0$.
But this is equivalent to $f_*T_{Y,y}\subseteq Z(s_x)=T_{X,x}\cap H$.
So our lemma is proved.
\end{proof}

\begin{lemma}\label{lem-bertini}
Let $X\subseteq\mathbb{P}^N$ be a smooth projective variety of dimension $n\ge 3$. 
Assume that $X$ contains a line  $\ell$.
Then a general hyperplane section of $X$ containing $\ell$ is smooth.
\end{lemma}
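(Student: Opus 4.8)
The plan is to parametrise the hyperplane sections of $X$ through $\ell$ by the linear system $W\subseteq(\mathbb{P}^N)^{\vee}$ of hyperplanes containing $\ell$; since $\ell$ is a line it imposes exactly two linear conditions, so $W$ is a linear subspace of dimension $N-2$, and it is nonempty. I will isolate two dense open subsets of $W$ whose members cut out hyperplane sections that are smooth off $\ell$, respectively smooth along $\ell$, and then intersect them.

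\emph{Smoothness away from $\ell$.} The base locus of $W$ on $X$ is contained in the base locus of $W$ on $\mathbb{P}^N$, which is precisely $\ell$. Hence $W$ restricts to a base-point-free linear system on $X\setminus\ell$, and as we work over $\mathbb{C}$, Bertini's theorem yields a dense open $W_1\subseteq W$ such that for $H\in W_1$ the section $X\cap H$ is smooth at every point of $X\setminus\ell$. Shrinking $W_1$ we may also assume $H\not\supseteq X$ for $H\in W_1$ (this is automatic unless $X\subseteq\bigcap_{H\supseteq\ell}H=\ell$, impossible since $n\ge3$), so that $X\cap H$ is a divisor of pure dimension $n-1$.

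\emph{Smoothness along $\ell$}, which is the crux, since here $\ell$ lies in the base locus and Bertini gives no information. I would run a dimension count on the incidence variety
$$\mathcal{I}:=\{(x,H)\in\ell\times W\ :\ \mathbb{T}_xX\subseteq H\},$$
where $\mathbb{T}_xX\cong\mathbb{P}^n$ denotes the embedded projective tangent space. For a fixed $x\in\ell$ one has $\ell\subseteq\mathbb{T}_xX$, so any $H$ with $\mathbb{T}_xX\subseteq H$ automatically contains $\ell$; such $H$ form a linear subspace of $(\mathbb{P}^N)^{\vee}$ of dimension exactly $N-n-1$, lying inside $W$. Thus the first projection $\mathcal{I}\to\ell$ has all fibres of dimension $N-n-1$, whence $\dim\mathcal{I}\le N-n$. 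As $\mathcal{I}$ is projective, the image $Z$ of the second projection $\mathcal{I}\to W$ is closed of dimension $\le N-n$, and the hypothesis $n\ge3$ forces $N-n\le N-3<N-2=\dim W$, so $Z\subsetneq W$ is a proper closed subset. For $H$ in the dense open $W_2:=W\setminus Z$ no point $x\in\ell$ satisfies $\mathbb{T}_xX\subseteq H$, so by Lemma \ref{lem-tangent-singular} applied to the identity map of $X$, the section $X\cap H$ is smooth at every point of $\ell$. Taking $H\in W_1\cap W_2$ then gives a hyperplane section through $\ell$ that is smooth everywhere.

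The only delicate bookkeeping I anticipate is confirming that the fibres of $\mathcal{I}\to\ell$ have the \emph{exact} dimension $N-n-1$, so that the bound $\dim\mathcal{I}\le N-n$ is the correct one; this reduces to the fact that $\mathbb{T}_xX$ is a genuine $\mathbb{P}^n$ inside $\mathbb{P}^N$ containing $\ell$. The inequality $n\ge3$ is used exactly once, and essentially: to make $Z$ a proper subvariety of $W$, i.e.\ to ensure a general hyperplane through $\ell$ avoids all tangent spaces $\mathbb{T}_xX$ with $x\in\ell$.
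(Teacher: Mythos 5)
Your proposal is correct and follows essentially the same route as the paper: both restrict to the $(N-2)$-dimensional linear system of hyperplanes through $\ell$, use Bertini off $\ell$, and then run the identical incidence-variety dimension count (fibres of dimension $N-n-1$ over $\ell$, total $\le N-n < N-2$) to handle smoothness along $\ell$. The only cosmetic difference is that you invoke Lemma~\ref{lem-tangent-singular} with $f=\mathrm{id}$ where the paper cites the argument from Hartshorne's proof of II.8.18; these are the same fact.
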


\begin{proof}
Denote by $V$ the sub-linear system of $|\mathcal{O}_{\mathbb{P}^N}(1)|$, each element of which contains $\ell$. 
Then $\dim(V)=N-2$.
By Bertini's theorem,  $H\cap X$ is smooth outside $\ell$ for a general  $H\in V$. 
Let $B\subseteq \ell\times |\mathcal{O}_{\mathbb{P}^N}(1)|$ be the Zariski closed subset consisting of all pairs $(x,H)$ such that $H\cap X$ is singular at some closed point $x\in\ell$.  Let $B_x$ be the fibre of the natural projection $B\rightarrow \ell$ at the point $x\in \ell$.
Then $\dim(B)\le \dim(\ell)+\dim(B_x)=N-n< \dim(V)$ (cf.~\cite[Chapter II, Proof of Theorem 8.18]{Har77}).
In particular, we can pick a general hyperplane $H\in V$  such that $H\cap X$ is smooth.
\end{proof}

\begin{proof}[Proof of Proposition \ref{pro-quintic-4fold}]
Let $f:Y\to X$ be a finite surjective morphism from a smooth projective fourfold with $b_2(Y)=1$. 
Let $A:=\{y\in Y\,|\,\textup{rank}\,df_y\le 2\}$ and $B:=f(A)$.
Since $f$ is finite, by \cite[Chapter III, Proposition 10.6]{Har77}, we have $\dim(A)=\dim(B)\le 2$.
By Proposition \ref{del-pezzo-fourfold-deg5-negative}, the lines of the second type cover a prime divisor. % such that a line $\ell\subseteq R$ is  of the second type if and only if $\ell$ is tangent to the conic $C$.
So there is a second type line $\ell$ on $X$ such that every irreducible component of $f^{-1}(\ell)$ is not contained in $A$. 

We claim that a general linear complete intersection surface $S_1$ containing $\ell$ is a  del Pezzo surface of degree 5 such that $S_2:=f^{-1}(S_1)$ has only isolated singularities. 
The smoothness of $S_1$ follows from Lemma \ref{lem-bertini}.
Moreover, by the choice of $\ell$, we know  $\dim (f_*(T_{Y,y}))\ge 3$ for a  general point $y\in f^{-1}(\ell)$. 
Taking a general hyperplane $H_1\subseteq \mathbb{P}^7$ containing $\ell$, we see that  $f_*T_{Y,y}\not\subseteq H_1$. 
In particular, by Lemma \ref{lem-tangent-singular}, $Y_1:=f^{-1}(X_1:=X\cap H_1)$ has only isolated singularities. 
Consider the following  sequence
$$f^*\Omega_X\otimes k(y)\to\Omega_Y\otimes k(y)\to\Omega_{Y/X}\otimes k(y)\to 0.$$
Taking the dual map with $x=f(y)$ where $y\in f^{-1}(\ell)$ is a general point, we have
$$0\to (\Omega_{Y/X}\otimes k(y))^{\vee}\to (\mathfrak{m}_y/\mathfrak{m}_y^2)^{\vee}\to (\mathfrak{m}_x/\mathfrak{m}_x^2)^{\vee}.$$
Here, we use the isomorphisms $\mathfrak{m}_x/\mathfrak{m}_x^2\cong\Omega_{X}\otimes k(x)$ and $\mathfrak{m}_y/\mathfrak{m}_y^2\cong\Omega_{Y}\otimes k(y)$  (cf.~\cite[Chapter II, Proposition 8.7]{Har77}).  
Then $\dim ((\Omega_{Y/X}\otimes k(y))^{\vee})=\dim(T_{Y,y})-\dim(f_*T_{Y,y}))\le 1$. 
Taking the base change $X_1\subseteq X$, we have $\dim ((\Omega_{Y_1/X_1}\otimes k(y))^{\vee})\le 1$ (cf.~\cite[Chapter II, Proposition 8.10]{Har77}).
With the same reason as above, we have
$\dim(T_{Y_1,y})-\dim(f_*T_{Y_1,y}))\le 1$; in particular, $\dim (f_*T_{Y_1, y})\ge 2$. 
Let $Y_1^\circ$ be the open regular locus of $Y_1$, and $f^\circ:Y_1^\circ\to X_1$ the induced map. 
Let $A_1:=\{y\in Y_1^\circ\,|\,\textup{rank}\,df^\circ_y\le 1\}$ and $B_1=\overline{f^\circ(A_1)}$.
Then $\dim(A_1)=\dim(B_1)\le 1$ and thus $\ell\not\subseteq B_1$.
Hence, one can pick another hyperplane $H_2$ such that $f_*T_{(f^\circ)^{-1}(S_1:=X_1\cap H_2),y}\not\subseteq H_2\cap H_1$ for a general $y\in (f^\circ)^{-1}(\ell)$. 
By Lemma \ref{lem-tangent-singular}, $S_2:=f^{-1}(S_1)$ has only isolated singularities  and our claim is proved.

Let $C=f^*(\ell)$ be the scheme-theoretic inverse image defined by the fibre product. % (not necessarily reduced or irreducible). %$S_1:=S$ and $S_2:=S'=f^*(S)$. 
Consider the relative normal bundle sequence
$$(*)~0\to\mathcal{N}_{\ell/S_1}\to\mathcal{N}_{\ell/X}\to\mathcal{N}_{S_1/X}|_{\ell}\to 0.$$
Clearly, $\mathcal{N}_{\ell/X}=\mathcal{O}_{\ell}(1)^{\oplus 2}\oplus\mathcal{O}_{\ell}(-1)$ (cf.~Lemma \ref{lem-lines-del}) and $\mathcal{N}_{S_1/X}|_{\ell}=\mathcal{O}_\ell(1)^{\oplus 2}$ by the choice of $S_1$.
Then $\mathcal{N}_{\ell/S_1}=\mathcal{O}_\ell(-1)$ and the sequence $(*)$ splits.
Pulling back $(*)$ to $Y$, we have 
$$(**)~0\to \mathcal{N}_{C/S_2}\to \mathcal{N}_{C/Y}\to \mathcal{N}_{S_2/Y}|_C\to 0$$
which also splits.
By the  choice of $S_1$, its pullback $S_2$ is also a complete intersection $f^*(H_1\cap X)\cap f^*(H_2\cap X)$ and hence Cohen-Macaulay. 
Since $S_2$ is regular in codimension one, $S_2$ is normal with $C$ being a  Cartier divisor on $S_2$, noting that so is $\ell$ on  $S_1$. 
By \cite[Lemma 2.3]{Bra92} (cf.~\cite[Claim in Proof of Proposition 2.2]{Ame97}), the sequence 
$(**)$ splits if and only if $C$ is the restriction of an effective Cartier divisor from the second infinitesimal neighborhood $S_2^{(2)}$ of $S_2$ in $X$. 

\begin{claim}\label{claim_1-dim-del-pezzo}
The image of the map $\textup{Pic}(S_2^{(2)})\to \textup{N}^1(S_2)$ is 1-dimensional if $\deg(f)\gg 1$.
\end{claim}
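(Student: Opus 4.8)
The plan is to control the Picard group of the second infinitesimal neighborhood $S_2^{(2)}$ of $S_2$ in $Y$ (or rather in $X$ via pullback; note $S_2^{(2)}\subseteq Y$ and we compare with the analogous neighborhood $S_1^{(2)}\subseteq X$) by relating it, via the finite map $f$, to $S_1^{(2)}$, which in turn we pin down using the surface $S_1$, a del Pezzo surface of degree $5$ (a blow-up of $\mathbb{P}^2$ at four points), together with the hypothesis $b_2(Y)=1$. First I would record the exact sequences of sheaves defining the infinitesimal neighborhoods: for $S_1$ cut out in $X$ by $(s_1,s_2)$ with $S_1=Z(s_1)\cap Z(s_2)$ and ideal sheaf $\mathcal{I}$, one has $\mathcal{I}/\mathcal{I}^2\cong \mathcal{N}_{S_1/X}^{\vee}|_{S_1}=\mathcal{O}_{S_1}(-1)^{\oplus 2}$, and $\mathcal{I}^2/\mathcal{I}^3\cong \operatorname{Sym}^2(\mathcal{N}_{S_1/X}^{\vee})$, so that $S_1^{(2)}$ sits in exact sequences of the structure sheaves giving (via the exponential-type sequence, or directly the sequence $0\to \mathcal{I}/\mathcal{I}^2\to \mathcal{O}_{S_1^{(2)}}^{*}\to \mathcal{O}_{S_1}^{*}\to 1$) an exact sequence
\begin{equation*}
H^1(S_1,\mathcal{O}_{S_1}(-1)^{\oplus 2})\to \operatorname{Pic}(S_1^{(2)})\to \operatorname{Pic}(S_1)\to H^2(S_1,\mathcal{O}_{S_1}(-1)^{\oplus 2}).
\end{equation*}
On a del Pezzo surface $S_1$ of degree $5$ the relevant cohomology groups $H^1(S_1,\mathcal{O}_{S_1}(-1))$ vanish (Kodaira vanishing / direct computation, $-1$ being the restriction of the hyperplane class to a del Pezzo surface), so $\operatorname{Pic}(S_1^{(2)})\hookrightarrow \operatorname{Pic}(S_1)$; but this is the wrong direction for a bound, so the actual point must run through $Y$.

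The key step is the transfer across $f$. Since $S_2=f^{-1}(S_1)$ is the scheme-theoretic preimage, $f$ restricts to a finite surjective morphism $S_2^{(2)}\to S_1^{(2)}$ of the infinitesimal neighborhoods, and hence induces $f^{*}\colon \operatorname{Pic}(S_1^{(2)})\to \operatorname{Pic}(S_2^{(2)})$ compatible with restriction to $S_1,S_2$. The divisor $C=f^{*}\ell$ we want to realize comes, by the splitting analysis via \cite[Lemma 2.3]{Bra92}, from an element of $\operatorname{Pic}(S_2^{(2)})$ restricting to $C\in\operatorname{N}^1(S_2)$; so it suffices to understand the image of $\operatorname{Pic}(S_2^{(2)})\to \operatorname{N}^1(S_2)$. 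Now I would argue that this image is generated (up to finite index, which is harmless since we only need the rank / the dimension of the image in $\operatorname{N}^1(S_2)\otimes\mathbb{R}$) by the pullback class $f^{*}(h|_{S_1})$ where $h$ is the hyperplane class of $X$: indeed $S_2$ is a complete intersection of two members of $|f^{*}h|$ in $Y$, so $\operatorname{Pic}(Y)=\mathbb{Z}\cdot f^{*}h$ (using $b_2(Y)=1$) surjects — by a Lefschetz-type / Grothendieck–Lefschetz argument on the $3$-fold $Y_1$ and then the surface $S_2$, valid once $\dim Y\ge 3$ and the sections are general enough that $Y_1$, hence $S_2$, inherit the restriction-of-Picard property, at least after replacing $f$ by a large iterate or taking $\deg(f)\gg 1$ to guarantee the needed positivity/ampleness — onto a finite-index subgroup of $\operatorname{N}^1(S_2)$. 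Combined with $\operatorname{Pic}(S_2^{(2)})\to\operatorname{Pic}(S_2)=\operatorname{N}^1(S_2)$ (the surface $S_2$ being normal with only isolated, hence rational-in-codimension-$\le 1$ issues, so $\operatorname{Pic}\to\operatorname{N}^1$ is injective modulo torsion on a surface with $b_1=0$) this yields that the image of $\operatorname{Pic}(S_2^{(2)})\to\operatorname{N}^1(S_2)$ has rank $1$, i.e. is $1$-dimensional.

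The main obstacle I expect is the Lefschetz-type step asserting $\operatorname{N}^1(S_2)$ is (virtually) generated by the restriction of $f^{*}h$: $S_2$ is only normal with isolated singularities, not smooth, so the classical Grothendieck–Lefschetz theorem does not apply verbatim, and one must either (a) pass to a resolution and control the exceptional classes — which could a priori increase the Picard rank — or (b) invoke a singular version of the Lefschetz hyperplane theorem for the preimage $Y_1$ of an ample divisor under a finite morphism, using that $Y_1$ has only isolated singularities and $\dim Y_1=3\ge 2$, so $\operatorname{Pic}(Y)\to\operatorname{Pic}(Y_1)$ is an isomorphism onto a subgroup of finite index; this is precisely where the hypothesis $\deg(f)\gg 1$ should enter, ensuring $f^{*}h$ is sufficiently ample for the relevant vanishing $H^1(Y_1,\mathcal{O}_{Y_1}(-f^{*}h))=0$ and its analogue on $S_2$ to force surjectivity. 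A secondary technical point is checking that the chosen general $S_1$ through $\ell$ (already arranged to make $S_2$ have isolated singularities) can additionally be arranged so that these cohomological vanishings hold; this should follow from semicontinuity once the generic member works, so I regard it as routine once the ampleness from $\deg(f)\gg 1$ is in hand.
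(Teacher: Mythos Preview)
Your approach has a genuine gap. You are in effect trying to prove that $\operatorname{N}^1(S_2)$ itself has rank $1$, via a Grothendieck--Lefschetz argument on the chain $Y\supset Y_1\supset S_2$. But this is simply false: the finite surjection $f|_{S_2}\colon S_2\to S_1$ induces an injection $f^*\colon \operatorname{N}^1(S_1)\hookrightarrow\operatorname{N}^1(S_2)$, and $S_1$ is a del Pezzo surface of degree $5$ with $\rho(S_1)=5$, so $\rho(S_2)\ge 5$. The Lefschetz step from the threefold $Y_1$ down to the surface $S_2$ is exactly where Grothendieck--Lefschetz fails (the Noether--Lefschetz phenomenon), and no amount of ampleness coming from $\deg(f)\gg 1$ repairs this, because $S_2=f^{-1}(S_1)$ is a \emph{special}, not general, complete intersection. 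So your proposed route cannot work.

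The point of the claim is precisely that one does \emph{not} need to control $\operatorname{N}^1(S_2)$. The paper's argument (following Braun and Amerik) observes that the composite $\operatorname{Pic}(S_2^{(2)})\to\operatorname{N}^1(S_2)\hookrightarrow H^1(S_2,\Omega_{S_2}^1)$ factors through $H^1(S_2,\Omega_{S_2^{(2)}}^1|_{S_2})$, and since the embedding $S_2\hookrightarrow Y$ factors through the thickening $S_2^{(2)}$, one has $\Omega_{S_2^{(2)}}^1|_{S_2}\cong\Omega_Y^1|_{S_2}$. Thus it suffices to show $h^1(S_2,\Omega_Y^1|_{S_2})=1$. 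Writing $S_2$ as a complete intersection of two members of $|mH_Y|$ (with $f^*H_X\equiv mH_Y$), the Koszul resolution of $\mathcal{I}_{S_2}$ twisted by $\Omega_Y$ and Serre vanishing for $m\gg 0$ give $H^i(Y,\Omega_Y\otimes\mathcal{I}_{S_2})=0$ for $i\le 2$, whence $H^1(S_2,\Omega_Y|_{S_2})\cong H^1(Y,\Omega_Y)\cong\mathbb{C}$ because $b_2(Y)=1$. This is where $\deg(f)\gg 1$ actually enters: it makes $m$ large enough for Serre vanishing, not for any Lefschetz statement.
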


Suppose Claim \ref{claim_1-dim-del-pezzo} for the time being.
By the proof of \cite[\S 2.4]{Bra92},  
$C\equiv \lambda H_Y|_{S_2}$ for some $\lambda>0$, where $H_Y$ is the ample generator of $\textup{N}^1(Y)$.
However, this is absurd, since 
$$0>\deg\mathcal{N}_{C/S_2}=(C^2)_{S_2}=(C\cdot \lambda H_Y|_{S_2})_{S_2}=\lambda (C\cdot H_Y)_Y>0.$$

So we are left to show  Claim \ref{claim_1-dim-del-pezzo}.
By \cite[Proof of Lemma 1.2]{Bra92}, the natural map
$$\textup{Pic}(S_2^{(2)})\to \textup{Pic}(S_2)\to\textup{N}^1(S_2)\to H^1(S_2,\Omega_{S_2}^1)$$
factors through
$$\textup{Pic}(S_2^{(2)})\to H^1(S_2^{(2)},\Omega_{S_2^{(2)}}^1)\to H^1(S_2,\Omega_{S_2^{(2)}}^1|_{S_2})\to H^1(S_2,\Omega_Y^1|_{S_2})\to H^1(S_2,\Omega_{S_2}^1).$$
Here,  $\Omega_{S_2^{(2)}}|_{S_2}\cong\Omega_Y^1|_{S_2}$ since the embedding $S_2\to Y$ factors through a thickening $S_2\to S_2^{(2)}\to Y$ (cf.~e.g.~\cite[Proposition 8.12]{Har77}). 
In addition, it is known that the natural map $\textup{N}^1(S_2)\to H^1(S_2,\Omega_{S_2}^1)$ is an injection.
So we only need to show $h^1(S_2,\Omega_Y^1|_{S_2})=1$ if  $\deg(f)\gg 1$. 
Write $f^*H_X\equiv mH_Y$ for some positive integer $m$ where $H_X$ and $H_Y$ are the ample generators of $\textup{N}^1(X)$ and $\textup{N}^1(Y)$ respectively.
Then $S_2=H_1'\cdot H_2'$ where $H_i'\in|\mathcal{O}_Y(mH_Y)|$. By the standard sequence of complete intersection,  % for complete intersections, we have
$$0\to\mathcal{O}_Y(-2mH_Y)\to \mathcal{O}_Y(-mH_Y)\oplus\mathcal{O}_Y(-mH_Y)\to\mathcal{I}_{S_2}\to 0$$
where $\mathcal{I}_{S_2}$ is the ideal sheaf of $S_2$.
Twisting it with $\Omega_Y$, we have
$$0\to\Omega_Y(-2mH_Y)\to \Omega_Y(-mH_Y)\oplus\Omega_Y(-mH_Y)\to\Omega_Y(\mathcal{I}_{S_2})\to 0.$$
By \cite[Chapter III, Theorem 7.6]{Har77}, there is an integer $m_0$ such that $H^i(Y,\Omega_Y(\mathcal{I}_{S_2}))=0$ for all $i<3$ whenever $m\ge m_0$.
On the other hand, we have the natural sequence
$$0\to \Omega_Y(\mathcal{I}_{S_2})\to \Omega_Y\to \Omega_Y|_{S_2}\to 0.$$
Hence, by the vanishing $H^i(Y,\Omega_Y(\mathcal{I}_{S_2}))=0$ for $i=1,2$, we have
$$H^1(S_2,\Omega_Y|_{S_2})=H^1(Y,\Omega_Y)=\mathbb{C},$$
noting that $1\le h^{1,1}(Y)\le b_2(Y)=1$. 
So we complete the proof of our claim. %and also the proposition. 
\end{proof}

The corollary below follows  from Proposition \ref{pro-quintic-4fold} and proofs of Theorems \ref{main-del-pezzo-non} and \ref{class-del-pezzo}.

\begin{corollary}\label{coro-del-pezzo-bounded}
Let $X$ be a del Pezzo manifold of Picard number 1 and of dimension $n$.
If $X$ is not  of degree 5 with $n=5$ or $6$, then for every smooth projective variety $Y$ of dimension $n$ and of $b_2(Y)=1$, there is a positive number $N$ such that every finite morphism $Y\to X$ has the degree no more than $N$.
\end{corollary}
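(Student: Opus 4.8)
The plan is to run through the classification of del Pezzo manifolds of Picard number $1$ given in Theorem \ref{class-del-pezzo} and, in each case, invoke the boundedness result already proved for that particular geometry. Since $\rho(X)=1$, Theorem \ref{class-del-pezzo} forces the degree $d=H^n$ to satisfy $1\le d\le 5$, and we treat the five values of $d$ separately. For $d\in\{1,2,3\}$ the manifold $X$ is a smooth well-formed weighted hypersurface of degree $3$, $4$, or $6$ in $\mathbb{P}(1^{\oplus(n+2)})$, $\mathbb{P}(2,1^{\oplus(n+1)})$, or $\mathbb{P}(3,2,1^{\oplus n})$ respectively; in each of these the weights $(a_0,a_1)$ are coprime and one checks $d\ge a_0+a_1+1$ (indeed $3\ge 1+1+1$, $4\ge 2+1+1$, $6\ge 3+2+1$), so Theorem \ref{thm-wps-ci} applies verbatim and yields a bound $N$ depending only on $X$ and $Y$ (note $b_2(Y)=1$ forces $\rho(Y)=1$ since $\rho(Y)\le b_2(Y)$). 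For $d=4$, $X$ is a smooth complete intersection of two quadrics in $\mathbb{P}^{n+2}$, and Proposition \ref{pro-complete-intersection-bounded} gives the desired bound. For $d=5$, the hypothesis of the corollary excludes $n=5,6$; by Theorem \ref{class-del-pezzo} the only remaining dimensions with $d=5$ are $n=3$ and $n=4$. When $n=4$ the bound is exactly Proposition \ref{pro-quintic-4fold}, and when $n=3$ it follows from the known threefold case (\cite{ARV99}, \cite{Ame97}), which was already invoked in the proof of Theorem \ref{main-del-pezzo-non}.

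The second assertion, that such $X$ admits no non-isomorphic surjective endomorphism, is then immediate: a non-isomorphic surjective endomorphism $f:X\to X$ would be a finite morphism of arbitrarily large degree under iteration (its degree is $(\deg f)^k$ for the $k$-th iterate, and $\deg f>1$ since $f$ is not an isomorphism), contradicting the uniform bound $N$ obtained by taking $Y=X$.

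I do not expect any genuine obstacle here, since every case reduces to a result already established in the excerpt; the only point requiring a little care is bookkeeping the admissible dimensions in the degree $5$ case — checking that excluding $n=5,6$ leaves precisely $n\le 4$, where either Proposition \ref{pro-quintic-4fold} (for $n=4$) or the classical low-dimensional results (for $n=3$) suffice — and confirming the coprimality and degree inequalities needed to feed the weighted hypersurfaces into Theorem \ref{thm-wps-ci}. One should also remark that the case $n\le 3$ is entirely covered by \cite{ARV99}, \cite{Ame97}, so the substance of the corollary is really in dimension $n\ge 4$.
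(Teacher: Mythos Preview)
Your proof is correct and follows exactly the approach intended in the paper, which simply states that the corollary follows from Proposition \ref{pro-quintic-4fold} together with the proof of Theorem \ref{main-del-pezzo-non} and Theorem \ref{class-del-pezzo}. One minor remark: the corollary as stated has no ``second assertion'' about endomorphisms, so that final paragraph is superfluous (though its content is correct).
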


\section{Proof of Theorem \ref{rho=1-dim=4}}
This short section is devoted to the proof of Theorem \ref{rho=1-dim=4}. 
Let us begin with a special case when the index of a Fano manifold is 2.
\begin{proposition}\label{pro-index=2}
Let $X$ be a Fano manifold of  index 2 and of Picard number 1.
If $X$ is covered by lines, then for any smooth Fano fourfold $Y$ of Picard number 1, there is a  number $N>0$ such that any finite  morphism $Y\to X$ 
has the degree no more than $N$.
\end{proposition}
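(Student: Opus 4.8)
The plan is to reduce the statement to an application of Lemma~\ref{lem-arv-prop2.1}, exactly as in the weighted-hypersurface case, once one understands which del Pezzo fourfolds arise here. Since $i(X)=2$ and $X$ is covered by lines, by the Kobayashi--Ochiai bound $i(X)\le \dim(X)+1=5$ we are not in the projective-space or smooth-quadric cases, and $X$ is a Fano fourfold with $-K_X\sim 2H$ for the ample generator $H$. The first step is to invoke the classification of such $X$ of Picard number $1$: these are the del Pezzo fourfolds of degree $d=H^4\in\{1,2,3,4,5\}$ (Theorem~\ref{class-del-pezzo}), so the problem splits into five cases. For $d=1,2,3$, $X$ is a smooth (weighted) hypersurface of degree $6$, $4$, $3$ in $\mathbb{P}(3,2,1^{\oplus 4})$, $\mathbb{P}(2,1^{\oplus 5})$, $\mathbb{P}(1^{\oplus 6})$ respectively, and Theorem~\ref{thm-wps-ci} applies directly. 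For $d=4$, Proposition~\ref{pro-complete-intersection-bounded} gives the bound via the double-cover projections onto smooth quadrics and \cite{Ame07}. For $d=5$, Proposition~\ref{pro-quintic-4fold} handles it. Hence in all cases there is an $N>0$ depending on $X$ and $Y$ bounding $\deg(Y\to X)$.

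Alternatively — and this is probably the intended unified argument, since the hypothesis explicitly says ``covered by lines'' — one can avoid case analysis by exploiting the normal bundle of a line. The second approach is: pick a general line $\ell\subseteq X$ through a general point; since $\mathcal{N}_{\ell/X}$ splits as $\bigoplus\mathcal{O}_\ell(a_i)$ with $\sum a_i = -K_X\cdot\ell - 2 = 2H\cdot\ell - 2 = 0$ and each $a_i\le 1$ (from the embedding $X\hookrightarrow\mathbb{P}^N$ when $d\ge 3$, or a direct argument), the line is either of the first kind ($\mathcal{O}_\ell^{\oplus 2}\oplus\mathcal{O}_\ell(1)$) or of the second kind ($\mathcal{O}_\ell(-1)\oplus\mathcal{O}_\ell(1)^{\oplus 2}$). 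One then argues that $\Omega_X(uH)$ is globally generated away from finitely many points for a suitable small $u$ (here $u=2$ should work: $T_X$ twisted down is controlled by the VMRT and the line geometry), and that the numerical positivity condition $c_4(\Omega_X(uH)) - u^4 H^4 > 0$ holds — this is a Chern-class computation on the fourfold analogous to Proposition~\ref{pro-top-chern-cal}, and for index-$2$ fourfolds of bounded degree it can be checked directly. Feeding this into Lemma~\ref{lem-arv-prop2.1} yields the desired $N$.

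The main obstacle is the $d=5$ case, which does not reduce to a hypersurface or to a quadric cover, and where the tangent bundle is ``less positive'' so the clean $c_n$-positivity estimate of Lemma~\ref{lem-arv-prop2.1} is not available in the same form. This is precisely why Section~4 (Proposition~\ref{pro-quintic-4fold}) develops the separate argument using lines of the second kind, the infinitesimal-neighborhood splitting criterion of Braun, and the Picard-number-$1$ rigidity of the complete-intersection surface $S_2$. So the honest plan is: state the classification, dispatch $d\le 4$ by Theorems~\ref{thm-wps-ci} and Proposition~\ref{pro-complete-intersection-bounded}, and cite Proposition~\ref{pro-quintic-4fold} for $d=5$; the ``covered by lines'' hypothesis is what guarantees we land in this list (ruling out, e.g., pathological behavior) and what powers the $d=5$ sub-argument. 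The last sentence of Theorem~\ref{rho=1-dim=4} (no non-isomorphic surjective endomorphism) then follows formally: such an endomorphism $f:X\to X$ would be finite of degree $\deg(f)^{k}\to\infty$ under iteration, contradicting the bound $N$.
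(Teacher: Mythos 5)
There is a genuine gap, and it starts with a misclassification. A Fano fourfold of index $2$ is a \emph{Mukai} fourfold ($i(X)=\dim(X)-2$), not a del Pezzo fourfold (which would have index $3$), so Theorem \ref{class-del-pezzo} does not apply and your list $d\in\{1,\dots,5\}$ of weighted hypersurfaces, intersections of quadrics and the quintic fourfold is not the relevant classification. The Mukai classification (by genus $2\le g\le 12$, $g\neq 11$) contains, for $g\ge 6$, linear sections of rational homogeneous spaces that are neither weighted hypersurfaces nor covers of quadrics; the paper itself warns in Remarks \ref{rem-mukai-rho=1} and \ref{rem-index=1-dim=4} that Theorem \ref{thm-wps-ci} and Proposition \ref{pro-complete-intersection-bounded} do not reach those cases. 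Your ``unified'' fallback inherits the same problem: the first/second-kind dichotomy for $\mathcal{N}_{\ell/X}$ in Lemma \ref{lem-lines-del} rests on $\deg\mathcal{N}_{\ell/X}=n-3$, which is specific to index $n-1$, and neither the generic global generation of $\Omega_X(2H)$ nor the positivity $c_4(\Omega_X(2H))-2^4H^4>0$ is established (or plausible) for, say, a linear section of $\textup{Gr}(2,5)$ of large genus. Note also that the proposition is stated for $X$ of arbitrary dimension, not only for fourfolds.

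What you are missing is that the hypothesis ``covered by lines'' does all the work, in a completely different way. Lines then form a dominating family $\mathcal{K}$ of minimal rational curves, so a general member $\ell$ is standard: $T_X|_\ell\cong\mathcal{O}(2)\oplus\mathcal{O}(1)^{\oplus p}\oplus\mathcal{O}^{\oplus(n-p-1)}$ with $p+2=-K_X\cdot\ell=2H\cdot\ell=2$, forcing $p=0$. Hence $X$ carries a standard rational curve with trivial normal bundle (equivalently, the VMRT at a general point is $0$-dimensional), and the boundedness of $\deg(Y\to X)$ for $Y$ a smooth Fano fourfold of Picard number $1$ is precisely \cite[Theorem 2]{HM03}. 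That one-line reduction is the paper's entire proof; it also explains why $Y$ is required to be Fano of Picard number $1$ here rather than an arbitrary smooth projective variety with $b_2=1$ as in Conjecture \ref{main-conj-bounded}.
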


\begin{proof}
Let $n=\dim(X)$. 
Let $\mathcal{K}$ be a dominating family of minimal rational curves in which the general elements are lines by our assumption. 
For a general line $C\in \mathcal{K}$, if $f:\mathbb{P}^1\rightarrow C$ is the normalization, then we have 
$$
f^*T_X=\mathcal{O}_{\mathbb{P}^1}(2)\oplus \mathcal{O}_{\mathbb{P}^1}(1)^{\oplus p}\oplus \mathcal{O}_{\mathbb{P}^1}^{\oplus(n-p-1)}
$$
for some  integer $p\ge 0$.
Since  the index $i(X)=2$, we see that $p=0$; in particular, the variety of minimal rational tangents $\mathcal{C}_x$ along a general point $x\in X$ is $0$-dimensional.
So our proposition follows from \cite[Theorem 2]{HM03}. 
\end{proof}

\begin{corollary}\label{mukai-pic=1-dim=4}
Let $X$ be a Mukai fourfold of Picard number 1. 
Then for any smooth Fano fourfold $Y$ of Picard number 1, there is a positive number $N$ such that every finite  morphism $Y\to X$ has the degree no more than $N$.
\end{corollary}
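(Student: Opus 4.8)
The plan is to reduce the statement directly to Proposition \ref{pro-index=2}, which already establishes the boundedness property for every Fano manifold of index $2$ and Picard number $1$ that is covered by lines. Two facts need to be put in place: that a Mukai fourfold has index exactly $2$, and that it is covered by lines.

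The first is immediate from the definition of a Mukai manifold: if $X$ has dimension $n$, then $i(X) = n - 2$, so for a Mukai fourfold $i(X) = 2$. The second is precisely Lemma \ref{lem-mukai-fourfold}, whose proof I would invoke verbatim: a dominating family $\mathcal{K}$ of minimal rational curves on $X$ has a standard general member $C$ with $-K_X \cdot C = 2H \cdot C \le \dim(X) + 1 = 5$, where $H$ is the ample generator of $\textup{Pic}(X)$; this forces $H \cdot C \in \{1,2\}$, and the case $H \cdot C = 2$ would, by Miyaoka's characterization \cite{Miy04} (cf.~\cite{DH17}) together with $\rho(X)=1$, make $X$ a smooth quadric of index $4$, contradicting $i(X) = 2$. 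Hence $H \cdot C = 1$, so $\mathcal{K}$ is a dominating family of lines and $X$ is covered by lines.

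With these in hand, $X$ satisfies the hypotheses of Proposition \ref{pro-index=2}: it is a Fano manifold of index $2$ and Picard number $1$ covered by lines. Applying that proposition yields, for any smooth Fano fourfold $Y$ of Picard number $1$, a positive number $N$ depending only on $X$ and $Y$ such that every finite morphism $Y \to X$ has degree at most $N$, which is exactly the assertion of the corollary.

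There is no genuine obstacle here, since the corollary is a formal consequence of results already proved: all the substantive input is contained in Lemma \ref{lem-mukai-fourfold} (via the characterization of quadrics) and in Proposition \ref{pro-index=2} (via the vanishing $p = 0$ of the VMRT dimension and \cite[Theorem 2]{HM03}). The only point worth stating explicitly is the numerical bookkeeping $i(X) = 2$ for a Mukai fourfold, which is what matches the hypothesis of Proposition \ref{pro-index=2}.
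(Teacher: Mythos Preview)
Your proposal is correct and follows exactly the paper's own proof: the corollary is stated there as an immediate consequence of Proposition \ref{pro-index=2} and Lemma \ref{lem-mukai-fourfold}, with the only bookkeeping being that a Mukai fourfold has index $2$.
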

\begin{proof}
It is an immediate consequence of Proposition \ref{pro-index=2} and Lemma \ref{lem-mukai-fourfold}.
\end{proof}

\begin{proof}[Proof of Theorem \ref{rho=1-dim=4}]
In the view of Corollary \ref{mukai-pic=1-dim=4}, we may assume that the index is 3.
But then such a del Pezzo fourfold satisfies the boundedness property by Proposition \ref{pro-quintic-4fold} and the proof of Theorem \ref{main-del-pezzo-non} (cf.~Theorems  \ref{thm-wps-ci} and \ref{class-del-pezzo}).
\end{proof}

%We end up this section with the following remark towards Question  \ref{ques-fourfold-bounded}.
\begin{remark}\label{rem-index=1-dim=4}
Let $X$ be a smooth Fano fourfold of Picard number 1 and of  index 1.
Let $\mathcal{K}$ be a dominating family of minimal rational curves.
Given a standard rational curve $\ell$ of $\mathcal{K}$, we have
$$T_X|_\ell\cong\mathcal{O}_\ell(2)\oplus\mathcal{O}_\ell(1)^{\oplus p}\oplus\mathcal{O}_\ell^{\oplus (3-p)}.$$
Then $p=0,1,2$ or $3$.  
If $p=0$, then $\ell$ is a smooth rational curve with trivial normal bundle; in particular, Question \ref{ques-main-bounded-fano} has a positive answer by \cite[Theorem 2]{HM03}.
If $p=2$, then $-K_X\cdot\ell=4$ and hence $X$ is a smooth quadric by \cite{Miy04} (cf.~\cite{DH17}), which is impossible.
If $p=3$, then $-K_X\cdot\ell=5$; in particular, $X\cong\mathbb{P}^4$  which cannot happen, either.
So  we are left to consider the case $p=1$, which is Question \ref{ques-fourfold-bounded}.

On the other hand, to confirm Conjecture \ref{main-conj-bounded} for Mukai fourfolds, one  needs to verify the case when the genus $g\ge 5$ (cf.~Remark \ref{rem-mukai-rho=1}).
This is one of our forthcoming work.
\end{remark}

\section{Endomorphisms of quadrics, Proof of Theorem \ref{thm-singular-quadric}} 
In this section, we shall classify the  singular quadric hypersurfaces admitting non-isomorphic surjective endomorphisms. 
Inspired by \cite[Corollary 1.4]{Yan21}, we prove Theorem \ref{thm-singular-quadric}, which extends \cite[Proposition 8]{PS89} and has its  independent  interests. 
Let us begin with a historical remark on the (total) invariance of quadric hypersurfaces.
\begin{remark}\label{rem-quadric-totally}
Let $f:\mathbb{P}^n\to \mathbb{P}^n$ be a non-isomorphic surjective endomorphism.
\begin{itemize}
\item Paranjape and Srinivas (\cite[Proposition 8]{PS89}) showed  that there is no non-isomorphic surjective endomorphism of  smooth quadrics of dimension $\ge 3$.  
\item Nakayama and Zhang showed that there are no quadrics  (either smooth or not) in $\mathbb{P}^3$ which is $f^{-1}$-invariant; see \cite[Theorem 1.5 (5)]{NZ10}.
\item Yanis extended \cite[Theorem 1.5 (5)]{NZ10} and showed  that there is no  $f^{-1}$-invariant quadrics (no matter smooth or not) in $\mathbb{P}^n$ with $n\ge 4$; see \cite[Corollary 1.4]{Yan21}.
\end{itemize}
\end{remark}

Before proving Theorem \ref{thm-singular-quadric}, 
we refer the reader to Notation \ref{notation2.1} for the rank and the dimension of the singular locus of a given quadric.

\begin{proof}[Proof of Theorem \ref{thm-singular-quadric}]
For the one direction, if $k=3$ (resp. $k=2$), we may assume that  $X$ is defined by $x_0x_1=x_2x_3$ (resp. $x_1^2=x_0x_2$).  
Then the coordinate map 
$[x_0:\cdots:x_{n}]\mapsto [x_0^q:\cdots:x_n^q]$ 
induces a polarized endomorphism. 
If $k=1$, $X$ is the union of two hyperplanes and  also admits a polarized endomorphism. 
We may assume $k\ge 4$ in the following and show the converse direction. 
If $k=n$, then $X$ is smooth and hence our theorem follows from \cite[Proposition 8]{PS89}.
So we may assume $4\le k<n$ so that 
$X$ is a (singular) cone over a smooth  quadric of dimension $k-1$. 
Besides, the singular locus $L$ of $X$ is a linear subspace of codimension $(k+1)$ in $\mathbb{P}^n$ which is defined by 
$x_0=\cdots=x_k=0$.

Suppose to the contrary that $f:X\to X$ is a non-isomorphic surjective endomorphism.
Since $\dim (X)=n-1\ge 3$ by assumption, by the Lefschetz theorem for Picard group,   %(cf.~e.g.~\cite[Example 3.1.25]{Laz04a}) 
we have $\rho(X)=1$.
Hence, $f$ is $q$-polarized with $q>1$.
Further, the ample generator $H_X$ of $\textup{Pic}(X)$ being very ample and the natural restriction $H^0(\mathbb{P}^n,\mathcal{O}_{\mathbb{P}^n}(m))\to H^0(X,mH_X)$ being surjective for any $m$ (cf.~\cite[Chapter III, Theorem 5.1]{Har77}), it follows from \cite[Proposition 2.1]{Fak03}  that $f$ induces a polarized endomorphism $g$ on $\mathbb{P}^n$ such that $f:=g|_{X}$. 

Let $H_1,\cdots,H_{n-k}$ be  general hyperplanes  on $\mathbb{P}^{n}$ such that the linear complete intersection $S:=H_1\cap\cdots\cap H_{n-k}$ is disjoint with both $L$ and $g^{-1}(L)$ by the dimension counting, noting that $\dim(L)=\dim(g^{-1}(L))=n-k-1$. 
Then $S$ is defined by the solutions of the following system of linear equations
\[
\begin{pmatrix}
b_{1,0}&\cdots&b_{1,n}\\
\vdots&\ddots&\vdots\\
b_{n-k,0}&\cdots&b_{n-k,n}
\end{pmatrix}
\begin{pmatrix}
x_{0}\\
\vdots\\
x_n
\end{pmatrix}=0.
\]
We note that, the matrix on the left hand side is of full rank since $\dim (S)=k$ and 
each hyperplane $H_i$ is defined by $\sum_{j=0}^nb_{i,j}x_j=0$. 
Since $S$ is disjoint with $L$, we see that
the system of linear equations below has only trivial solution.
\[
\begin{pmatrix}
b_{1,k+1}&\cdots&b_{1,n}\\
\vdots&\ddots&\vdots\\
b_{n-k,k+1}&\cdots&b_{n-k,n}
\end{pmatrix}
\begin{pmatrix}
x_{k+1}\\
\vdots\\
x_n
\end{pmatrix}=0
\]
In particular, the square matrix
\[
\begin{pmatrix}
b_{1,k+1}&\cdots&b_{1,n}\\
\vdots&\ddots&\vdots\\
b_{n-k,k+1}&\cdots&b_{n-k,n}
\end{pmatrix}
\]
is nonsingular, and hence the linear subspace  $S$ is actually defined by the solution of 
\[
\begin{pmatrix}
x_{k+1}\\
\vdots\\
x_n
\end{pmatrix}=\begin{pmatrix}
a_{1,0}&\cdots&a_{1,k}\\
\vdots&\ddots&\vdots\\
a_{n-k,0}&\cdots&a_{n-k,k}
\end{pmatrix}
\begin{pmatrix}
x_0\\
\vdots\\
x_k
\end{pmatrix}.
\]
Consider the following projection map (cf.~\cite[Proof of Corollary 1.4]{Yan21})
\begin{align*}
\tau:\mathbb{P}^n&\dashrightarrow S\cong\mathbb{P}^k\subseteq\mathbb{P}^{n}\\
[x_0:\cdots:x_n]&\mapsto[x_0:\cdots:x_{k}:\sum_{i=0}^{k}a_{1,i}x_i:\cdots:\sum_{i=0}^ka_{n-k,i}x_i]
\end{align*}
where the only non-defined locus of $\tau$ is $L$. 
Then the composite dominating map $g_S:=\tau\circ g|_S$ is  not well-defined at $s\in S$ if and only if $g(s)\in L$. 
Since $g^{-1}(L)$ is disjoint with $S$, it follows that $g_S$ is a (finite) surjective endomorphism on $S$.
We claim that  $\deg(g_S)>1$. 
Suppose to the contrary that  $\deg(g_S)=1$.  
Then $S\to g(S)$ is a finite birational morphism and hence $g|_S$ is the normalization due to the normality of $S$. 
However, there is a further finite birational map $g(S)\to S$ such that the composite $\tau\circ g|_S$ is an isomorphism. 
This in turn implies $S\cong g(S)$; in particular, the restriction $g|_S$ is an isomorphism which is absurd since $g|_S$ is also $q$-polarized with $q>1$ (cf.~\cite[Proposition 1.1]{MZ18}). 
Therefore, $g_S$ is a non-isomorphic surjective endomorphism and hence polarized since $S\cong \mathbb{P}^k$. 
Now that $g(X)=X$, from the defining equation of $X$, we have $\tau(X\cap g(S))\subseteq X$.
Hence,
$$g_S(X\cap S)=\tau(g(X\cap S))\subseteq \tau(X\cap g(S))\subseteq X\cap S.$$
By Bertini's theorem, $X\cap S$ is a reduced and irreducible subvariety (cf.~\cite[Chapter III, Remark 7.9.1]{Har77}). 
Moreover, since $X$ is smooth in codimension $k$, the intersection $X\cap S$ is smooth by the choice of $S$. 
To conclude, we get a non-isomorphic surjective endomorphism $g_S|_{X\cap S}$ on the smooth quadric $X\cap S$ of dimension $k-1\ge 3$ (cf.~\cite[Proposition 1.1]{MZ18}) which gives rise to a contradiction to \cite[Proposition 8]{PS89}.
\end{proof}

We end up this section with the following example of Theorem \ref{thm-singular-quadric} in a theoretical way.
\begin{example}
\textup{
Let $Y$ be a smooth quadric surface and $W=\mathbb{P}_Y(\mathcal{O}\oplus \mathcal{O}(-1,-1))$ a $\mathbb{P}^1$-bundle over $Y$.
It is known that $W$ is a Fano threefold.
Let $S$ be the negative section of $W$ such that $S|_S=(-1,-1)$. 
Then there is a blow-down $\tau: W\to X$  contracting $S$ to a single point, i.e., the vertex of the singular cone $X$ over $Y$ (cf.~\cite[Example 2.7]{KM98}). 
Clearly, $W$ admits a polarized endomorphism  and it descends to  $X$ after iteration, noting that $\tau$ is the composite of a $K$-negative divisorial contraction and a $K$-negative small contraction  (cf.~\cite[Lemmas 6.4 and 6.5]{MZ18}).
Consequently, the $3$-dimensional singular quadric cone $X$  admits a non-isomorphic surjective endomorphism.}
\end{example}

%\begin{example}
%\textup{In general, fix any positive integer $k\ge 3$ and consider a singular hypersurface $X$ in $\mathbb{P}^{2k}$ defined by the equation
%$x_0x_1\cdots x_{k-1}=x_{k}x_{k+2}\cdots x_{2k-1}$ 
%which is of degree $k$. 
%Here, $[x_0:\cdots:x_{2k}]$ is the coordinate in $\mathbb{P}^{2k}$.
%Then it is easy to see that 
%$[x_0:\cdots:x_{2k}]\mapsto [x_0^q:\cdots:x_{2k}^q]$
%is a $q$-polarized endomorphism on $X$ for any positive integer $q$.
%In particular, when $k=2$, we find a singular quadric threefold in $\mathbb{P}^4$ admitting plenty of non-isomorphic surjective endomorphisms.}
%\end{example}

\section{Higher Picard number case, Proof of Theorem \ref{thm-middle-index}}\label{sec-high-picard}

In this section, we  study del Pezzo manifolds and Mukai manifolds of Picard number $\ge 2$ so as to prove Theorem \ref{thm-middle-index}. 
Our first result of this section is Theorem \ref{thm-higher-pic}, which answers  Question \ref{main-ques-toric} positively when $X$ is of Picard number $\ge 2$ and of index $\ge \dim(X)-2$. %(cf.~Theorem \ref{main-del-pezzo-non} and Remark \ref{rem-mukai-rho=1}). 

\begin{theorem}\label{thm-higher-pic}
Let $X$ be a  Fano manifold of Picard number $\ge 2$.
Suppose that the index $i(X)\ge\dim(X)-2$.
If $X$ admits an int-amplified endomorphism, then $X$ is toric.
\end{theorem}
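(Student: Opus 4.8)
The plan is to run the classification of high-index Fano manifolds of Picard number $\ge 2$ (Lemmas \ref{lem-index-reduction} and \ref{lem-class-middleindex}) and check toricity case-by-case, using the existence of an int-amplified endomorphism to rule out the non-toric cases. First I would reduce to the range where the classification lists are finite: if $i(X)>\frac{\dim(X)}{2}+1$ then $\rho(X)=1$ by Lemma \ref{lem-index-reduction}, contradicting the hypothesis, so we are in the regime $i(X)\in\{\dim(X)-2,\dim(X)-1,\dim(X)\}$ with $i(X)\le\frac{\dim(X)}{2}+1$; solving these inequalities forces $\dim(X)$ small (essentially $\dim(X)\le 6$, with del Pezzo manifolds needing $\dim(X)\le 3$ or the degree-$6$ threefold/fourfold cases of Picard number $\ge 2$, Mukai manifolds needing $\dim(X)\le 6$, and the boundary cases $i(X)=\frac{\dim(X)}{2}+1$ and $i(X)=\frac{\dim(X)+1}{2}$ handled by the explicit lists in Lemma \ref{lem-index-reduction}). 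So the whole theorem becomes a finite checklist.

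Next I would dispatch the cases. Many entries in Theorem \ref{class-del-pezzo} and in Lemmas \ref{lem-index-reduction}, \ref{lem-class-middleindex} are already toric: products of projective spaces, $\mathbb{P}(T_{\mathbb{P}^r})$ (which is toric), projective bundles $\mathbb{P}_{\mathbb{P}^r}(\mathcal{O}(a_0)\oplus\cdots\oplus\mathcal{O}(a_{r-1}))$ over projective spaces, and the blow-up of $\mathbb{P}^3$ at a point — for these there is nothing to prove. The remaining entries are non-toric: $\mathbb{P}^{r-1}\times Q^r$ and $\mathbb{P}_{Q^{r+1}}(\cdots)$ and $\mathbb{P}_{Q^4}(\mathbb{E}(1)\oplus\mathcal{O}(1))$ (all involving a quadric factor or base of dimension $\ge 3$), the del Pezzo factor case $V_d\times\mathbb{P}^{r-1}$ with $V_d$ not toric, the quadric bundle case (2) of Lemma \ref{lem-class-middleindex}, and the two sporadic descriptions in case (3) (blow-up of $Q^{2r}$ along $\mathbb{P}^{r-1}$, and $(1,1)$-divisor intersections in $\mathbb{P}^{r+1}\times\mathbb{P}^{r+1}$). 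For each of these I would derive a contradiction with the existence of an int-amplified endomorphism $f$. The key tool is the equivariant MMP of Meng–Zhang: an int-amplified endomorphism descends along $K_X$-negative extremal contractions and along the factors of a product, and it pulls back to int-amplified endomorphisms on fibres of $\mathbb{P}^1$- or quadric-bundle structures. Thus $\mathbb{P}^{r-1}\times Q^r$ (and any product with a non-toric factor, or a bundle over a base admitting such a contraction) would force an int-amplified — hence non-isomorphic, hence polarized since $\rho=1$ — endomorphism on $Q^r$ with $r\ge 3$, contradicting \cite[Proposition 8]{PS89} (via Theorem \ref{thm-singular-quadric}). The del Pezzo factor $V_d$ with $d\le 5$ and $\dim\ge 3$ is excluded the same way using Theorem \ref{main-del-pezzo-non}. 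For the blow-up of $Q^{2r}$ along $\mathbb{P}^{r-1}$, I would run the contraction back down to $Q^{2r}$ and again invoke \cite{PS89}; the $(1,1)$-divisor case and the quadric bundle case (2) I would attack by using that the two elementary contractions of $X$ must both be $f$-equivariant (up to iterate), analyzing the induced maps on the two base varieties and their fibres, and again landing on a forbidden endomorphism of a positive-dimensional quadric or on a numerical obstruction.

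The main obstacle I expect is the quadric-bundle case (2) of Lemma \ref{lem-class-middleindex} and the intersection-of-two-$(1,1)$-divisors case in (3): these are genuinely non-toric Fano manifolds of Picard number $2$ whose elementary contractions need not both be of fibre type, so one cannot immediately reduce to an endomorphism of a lower-dimensional variety. Here I would likely need a separate argument — e.g., showing the general fibre of the quadric bundle is a smooth quadric of dimension $\ge 3$ carrying an induced polarized endomorphism (again contradicting \cite{PS89}), or using the $f^{-1}$-invariance of the exceptional/ramification structure together with a count of totally invariant divisors (as in \cite{Men20}, \cite{MZg20}) to conclude that the boundary divisor $X\setminus T$ has the wrong number of components for a toric pair unless $X$ is in fact one of the toric entries — in which case the supposed non-toric description is actually isomorphic to a toric one, completing the proof.
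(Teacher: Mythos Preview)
Your overall strategy --- reduce to a finite list via classification and check each entry, using equivariant descent of the int-amplified endomorphism to rule out the non-toric cases --- is exactly what the paper does. But your implementation has two concrete problems.

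First, a factual error: $\mathbb{P}(T_{\mathbb{P}^r})$ is \emph{not} toric for $r\ge 2$. A projective bundle $\mathbb{P}_Y(\mathcal{E})$ over a toric base $Y$ is toric only when $\mathcal{E}$ splits as a direct sum of line bundles, and $T_{\mathbb{P}^r}$ does not split. So this entry cannot be ``dispatched'' as toric; it must instead be excluded by showing it carries no int-amplified endomorphism (the paper cites \cite{Ame03} for this).

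Second, and more seriously, Lemma~\ref{lem-class-middleindex} is never applicable under the hypothesis of Theorem~\ref{thm-higher-pic}. That lemma classifies Fano manifolds of dimension $2r\ge 6$ and index exactly $r$; but combining $i(X)=\dim(X)/2$ with $i(X)\ge\dim(X)-2$ forces $\dim(X)\le 4$, contradicting the lemma's dimension bound. Hence all the case analysis you extract from Lemma~\ref{lem-class-middleindex} --- the quadric bundles, $V_d\times\mathbb{P}^{r-1}$, the two sporadic types in (3) --- is simply irrelevant to this theorem. What your outline misses entirely is the Mukai \emph{fourfold} case ($\dim X=4$, $i(X)=2$, $\rho(X)\ge 2$): Lemma~\ref{lem-index-reduction} stops at $r=\frac{n+1}{2}$ and says nothing for $n=4$, $r=2$, and Lemma~\ref{lem-class-middleindex} does not reach down to dimension $4$. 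The paper fills this gap by invoking Mukai's direct classification \cite[Theorem~7]{Muk89} (the nine types listed in the proof of Proposition~\ref{prop-mukai-higher-rho}) and then eliminating each non-toric type separately; two of them --- the double cover of $\mathbb{P}^2\times\mathbb{P}^2$ branched along a $(2,2)$ divisor, and the intersection of two $(1,1)$ divisors in $\mathbb{P}^3\times\mathbb{P}^3$ --- require the dedicated arguments of Lemmas~\ref{lem-double-cover} and~\ref{lem-middle-index-bidegree11}. Your proposal contains no substitute for any of this, so as written it does not prove the theorem.
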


Theorem \ref{thm-higher-pic} follows from Propositions \ref{prop-del-pezzo-higher-rho} and \ref{prop-mukai-higher-rho} below.

\begin{proposition}\label{prop-del-pezzo-higher-rho}
Theorem \ref{thm-higher-pic} holds if $X$ is a del Pezzo manifold.
\end{proposition}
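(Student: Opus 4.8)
The plan is to combine the classification of del Pezzo manifolds (Theorem~\ref{class-del-pezzo}) with an equivariant descent along the Mori contractions. First I would note that, since $\rho(X)\ge 2$, the degree $d=H^n$ must satisfy $d\in\{6,7\}$, so $X$ is one of $\mathbb{P}^2\times\mathbb{P}^2$, $\mathbb{P}(T_{\mathbb{P}^2})$, $\mathbb{P}^1\times\mathbb{P}^1\times\mathbb{P}^1$, or the blow-up of a point on $\mathbb{P}^3$. Three of these four varieties are toric, so for them the assertion of the proposition is vacuous; hence the entire content is that $X=\mathbb{P}(T_{\mathbb{P}^2})$ --- which is the flag threefold $\mathrm{SL}_3/B$, and is \emph{not} toric, because $\textup{Aut}^0(X)=\mathrm{PGL}_3$ has rank $2<\dim X=3$ --- does not admit an int-amplified endomorphism.

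So suppose, for contradiction, that $f\colon X\to X$ is int-amplified with $X=\mathbb{P}(T_{\mathbb{P}^2})$. Using the closed embedding $X\hookrightarrow\mathbb{P}^2\times(\mathbb{P}^2)^\vee$ as the incidence divisor $X=V(\sum_i x_iy_i)$ (of bidegree $(1,1)$), the Mori cone $\overline{NE}(X)$ has exactly two extremal rays, given by the two projections $p_1\colon X\to\mathbb{P}^2$ and $p_2\colon X\to(\mathbb{P}^2)^\vee$, both $\mathbb{P}^1$-bundles. As $f$ is finite it permutes these rays, so after replacing $f$ by $f^2$ (still int-amplified) I may assume $f_*$ fixes both; then $f$ descends along each $p_i$ to an endomorphism $g_i$ of $\mathbb{P}^2$, and $f=(g_1\times g_2)|_X$. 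Writing $f^*(H_i|_X)=a_i\,H_i|_X$ with $H_i=p_i^*\mathcal{O}(1)$, int-amplifiedness of $f$ forces $a_1,a_2\ge 2$; comparing $\deg f$ computed through $p_1$ and through $p_2$ (a $p_1$-fibre is mapped $a_2$-to-$1$ onto a $p_1$-fibre, and symmetrically) gives $a_1=a_2=:a\ge 2$, so each $g_i$ is $a$-polarized on $\mathbb{P}^2$, of topological degree $a^2$.

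The contradiction would then come from $(g_1\times g_2)(X)\subseteq X$. Writing $g_i=(g_i^{(0)}:g_i^{(1)}:g_i^{(2)})$ with homogeneous entries of degree $a$, this containment says that the bidegree-$(a,a)$ polynomial $\Phi(x,y)=\sum_i g_1^{(i)}(x)g_2^{(i)}(y)$ vanishes on the irreducible hypersurface $V(\sum_i x_iy_i)$, hence $\sum_i x_iy_i\mid\Phi$. Specializing $x$ to a fixed $x^0$ (and noting $g_1(x^0)\neq 0$, as $g_1$ is a morphism) shows that $g_2$ sends the line $\{y:\sum_i x^0_iy_i=0\}$ into the line with coefficient vector $g_1(x^0)$; since these lines exhaust all lines as $x^0$ varies, $g_2$ sends every line to a line, and the induced map on the dual planes is exactly $g_1$. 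But a degree-$a^2$ endomorphism of $\mathbb{P}^2$ carrying every line to a line has dual map of degree $a^4$ (the preimage under the dual map of a general pencil of lines through a point $q$ is the union of the $a^2$ pencils through the points of $g_2^{-1}(q)$), while $g_1$ has degree $a^2$; thus $a^4=a^2$ and $a=1$, a contradiction. (Alternatively, as $\mathbb{P}(T_{\mathbb{P}^2})$ is a non-toric Fano threefold, this step is already covered by \cite[Theorem 1.4]{MZZ22}, or by the classification of endomorphisms of rational homogeneous spaces.) The one genuine obstacle is exactly this $\mathbb{P}(T_{\mathbb{P}^2})$ case --- carrying out the descent along both contractions and the degree bookkeeping that pins $a$ down to $1$; the rest is a bare appeal to Theorem~\ref{class-del-pezzo}.
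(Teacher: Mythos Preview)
Your argument is correct in dimension $\ge 3$, but the paper's proof is considerably shorter and follows a different path. The paper simply observes: for $\dim X \le 3$ the result is already known by \cite[Theorem 3]{Nak02} (surfaces) and \cite[Theorem 1.4]{MZZ22} (threefolds); for $\dim X \ge 4$, Theorem~\ref{class-del-pezzo} leaves only $\mathbb{P}^2 \times \mathbb{P}^2$, which is toric. That is the entire proof.

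Your route instead unpacks the threefold case by hand. The direct argument you give for $\mathbb{P}(T_{\mathbb{P}^2})$ is correct and rather nice --- the descent $f=(g_1\times g_2)|_X$, the forced equality $a_1=a_2$ via $\deg f = a_1^2 a_2 = a_2^2 a_1$, and the line-to-line constraint yielding $\deg g_2 = a^4$ versus $a^2$ --- and is essentially a concrete instance of Amerik's result \cite[Proposition 3]{Ame03} on endomorphisms of $\mathbb{P}(T_{\mathbb{P}^n})$, which the paper invokes elsewhere. But, as you yourself note parenthetically, \cite{MZZ22} already disposes of all Fano threefolds, so the hand computation is unnecessary for the proposition. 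What your approach buys is self-containment for this one case; what the paper's approach buys is brevity.

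There is also a small omission in your write-up: Theorem~\ref{class-del-pezzo} is stated only for $n\ge 3$, so your case split does not cover del Pezzo \emph{surfaces} (dimension $2$, index $1$, $\rho\ge 2$), e.g.\ blow-ups of $\mathbb{P}^2$ in $1\le k\le 8$ general points. The paper handles these in one stroke via \cite[Theorem 3]{Nak02}; you should add a line doing the same.
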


\begin{proof}
If $\dim(X)\le 3$, then our proposition follows from \cite[Theorem 3]{Nak02} and \cite[Theorem 1.4]{MZZ22}. 
So we may assume that $\dim(X)\ge 4$. 
Then by Theorem \ref{class-del-pezzo}, $X$ has to be $\mathbb{P}^2\times\mathbb{P}^2$, which is clearly toric.
\end{proof}

\begin{proposition}\label{prop-mukai-higher-rho}
Theorem \ref{thm-higher-pic} holds if $X$ is a Mukai manifold. 
\end{proposition}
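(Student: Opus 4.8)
The plan is to exploit the classification of Mukai manifolds of Picard number $\ge 2$: by the Kobayashi--Ochiai bound and the index-reduction machinery recalled in Lemma \ref{lem-index-reduction} (together with the Wi\'sniewski-type classification in Lemma \ref{lem-class-middleindex} when $\dim(X)=2r\ge 6$, noting $i(X)=\dim(X)-2=r$ forces $\dim(X)=6$), the possibilities for $X$ form a short, explicit list. Concretely, if $\dim(X)=3$ a Mukai manifold has index $1$, which is excluded by $\rho(X)\ge 2$ and $i(X)\ge \dim(X)-2$ only in the boundary case; if $\dim(X)=4$ then $i(X)=2$ and $\rho(X)\ge 2$, if $\dim(X)=5$ then $i(X)=3$, and if $\dim(X)\ge 6$ then Lemma \ref{lem-class-middleindex} applies with $r=\dim(X)-2\ge 4$, i.e.\ $\dim(X)=2r$ forces $r=\dim(X)-2$, so $\dim(X)=4$, a contradiction; hence in fact only $\dim(X)\in\{4,5,6\}$ with the corresponding small index need to be examined, and each is handled by quoting the relevant structural classification of Fano manifolds of large index and Picard number $\ge 2$ (Mella, Mukai, Wi\'sniewski), which in every case exhibits $X$ either as a product or projectivized bundle over a projective space or quadric, or as a specific Fano with an obvious torus action.

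The key steps, in order, are: (1) reduce to $\dim(X)\ge 4$ using \cite[Theorem 3]{Nak02} and \cite[Theorem 1.4]{MZZ22} for the threefold case as in Proposition \ref{prop-del-pezzo-higher-rho}; (2) invoke Lemma \ref{lem-index-reduction} and Lemma \ref{lem-class-middleindex} to pin down $X$ to a finite explicit list of Fano manifolds of Picard number $\ge 2$ with $i(X)=\dim(X)-2$; (3) for each item on the list, either observe directly that $X$ is toric (this is the case for products of projective spaces and quadrics, projective bundles $\mathbb{P}_Y(\mathcal{E})$ with $Y$ toric and $\mathcal{E}$ a direct sum of line bundles, blow-ups of quadrics along linear subspaces, etc.), or — for the few non-toric survivors such as $\mathbb{P}_{Q^4}(\mathbb{E}(1)\oplus\mathcal{O}(1))$ with $\mathbb{E}$ a spinor bundle, or quadric bundles whose discriminant is not degenerate, or linear sections of Grassmannians — rule out the existence of an int-amplified endomorphism. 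The last part uses the equivariant minimal model program of Meng--Zhang (\cite{MZ18}, \cite{Men20}): an int-amplified endomorphism descends along elementary contractions, so on a $\mathbb{P}^k$- or quadric-bundle one gets an int-amplified (hence, after the base being a projective space or smooth quadric, polarized) endomorphism on the base and a compatible action on the total space, which constrains the defining bundle $\mathcal{E}$ to be $f$-semistable of a very restrictive form — forcing $\mathcal{E}$ to split into line bundles (so $X$ is toric) or yielding a contradiction on the base (spinor bundles on $Q^4$, or the VMRT-rigid Grassmannian sections, admit no polarized endomorphism).

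The main obstacle I expect is step (3) for the genuinely non-split projective bundles and the quadric-bundle case (items (1) last entry and (2) of Lemma \ref{lem-class-middleindex}): one must show that pulling back an int-amplified endomorphism $f$ along $p:X\to Y$ and then restricting to fibres forces the bundle $\mathcal{E}$ (or the quadric bundle structure) to be ``totally invariant'' in a way incompatible with its non-toric structure. For the spinor bundle $\mathbb{E}$ on $Q^4$ this boils down to the rigidity of $Q^4$ under polarized endomorphisms combined with the fact that $\mathbb{E}$ is not a pullback of anything nor a sum of line bundles, so no $f$-equivariant structure exists; for the $\mathbb{P}^{r+1}\times\mathbb{P}^{r+1}$-bidegree-$(1,1)$ divisor one uses that such a smooth divisor is a flag-type variety (a $\mathbb{P}^{r-1}$-bundle over $Q^{2r}$ in another way, or homogeneous in the extremal case) and invokes the homogeneous-space results (\cite{PS89}, \cite{HN11}, \cite{MZZ22}-type arguments). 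I would organize the non-toric cases into a short lemma ruling out int-amplified endomorphisms on (i) $\mathbb{P}_{Q^4}(\mathbb{E}(1)\oplus\mathcal{O}(1))$, (ii) non-degenerate quadric bundles over $\mathbb{P}^r$, and (iii) blow-ups of $Q^{2r}$ along $\mathbb{P}^{r-1}$ that are not toric, each by descending $f$ to the base and using that the base is a projective space or a smooth quadric of dimension $\ge 3$ with no nontrivial polarized self-map unless it is $\mathbb{P}^n$, at which point the total space is forced into the toric list.
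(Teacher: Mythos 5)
Your overall architecture (classify, then check each case) matches the paper's, and your reduction of the dimension to $\{4,5,6\}$ via Lemma \ref{lem-index-reduction} is fine for $\dim(X)\in\{5,6\}$ (where $\mathbb{P}^3\times\mathbb{P}^3$ is toric, $\mathbb{P}_{\mathbb{P}^3}(\mathcal{O}(2)\oplus\mathcal{O}(1)^{\oplus 2})$ is toric, and $\mathbb{P}(T_{\mathbb{P}^3})$, $\mathbb{P}^2\times Q^3$ are excluded by \cite{Ame03} and \cite{PS89}). But there are two genuine problems. First, your classification step leans on Lemma \ref{lem-class-middleindex}, which simply does not apply to Mukai manifolds: it requires $\dim(X)=2r\ge 6$ with $i(X)=r$, whereas $i(X)=\dim(X)-2$ forces $r=2$, $\dim(X)=4$. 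Consequently the list of ``non-toric survivors'' you extract from it ($\mathbb{P}_{Q^4}(\mathbb{E}(1)\oplus\mathcal{O}(1))$, quadric bundles over $\mathbb{P}^r$, blow-ups of $Q^{2r}$ along $\mathbb{P}^{r-1}$) consists of varieties of index $\dim/2$, not Mukai manifolds. The fourfold case --- which is where all the work is --- needs Mukai's classification \cite[Theorem 7]{Muk89} of Mukai manifolds of $\rho\ge 2$ as linear sections of nine explicit varieties; you name Mukai but never actually run through that list.

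Second, and more seriously, your general mechanism for excluding int-amplified endomorphisms --- descend $f$ along an elementary contraction and derive a contradiction because the base is a quadric of dimension $\ge 3$ --- fails for precisely the three hardest fourfolds on Mukai's list, because for those every contraction base is a projective space (which has plenty of polarized endomorphisms): (i) the double cover of $\mathbb{P}^2\times\mathbb{P}^2$ branched in a $(2,2)$ divisor, whose fibres over either $\mathbb{P}^2$ are quadric \emph{surfaces} $\mathbb{P}^1\times\mathbb{P}^1$ (so the periodic-fibre trick also gives nothing); (ii) the bidegree $(1,2)$ divisor in $\mathbb{P}^2\times\mathbb{P}^3$; and (iii) the complete intersection of two $(1,1)$ divisors in $\mathbb{P}^3\times\mathbb{P}^3$, which is \emph{not} homogeneous (a single smooth $(1,1)$ divisor is $\mathbb{P}(T_{\mathbb{P}^3})$, but the intersection of two is not, as witnessed by its non-equidimensional contractions), so your appeal to homogeneous-space results does not apply. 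Each of these requires a separate, finer argument: the paper proves that the branch divisor of the double cover would have to be totally invariant and then contradicts the ramification-divisor count (Lemma \ref{lem-double-cover}); for (ii) it restricts to a periodic fibre $\mathbb{P}^1\times\mathbb{P}^1$ and counts totally invariant lines coming from the eight special fibres of the other contraction; and for (iii) it shows the $r+2$ isolated special fibres project to $r+2$ distinct totally invariant divisors on $\mathbb{P}^{r+1}$, forcing a toric-pair structure that is incompatible with the fibre dimensions (Lemma \ref{lem-middle-index-bidegree11}). Without arguments of this type, your step (3) does not close.
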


Before  proving Proposition \ref{prop-mukai-higher-rho}, we  show the following Lemmas \ref{lem-double-cover} and \ref{lem-middle-index-bidegree11}.

\begin{lemma}[{cf.~\cite[Proof of Lemma 4.12]{MZZ22}}]\label{lem-double-cover}
Let $X$ be a double cover of $\mathbb{P}^n\times\mathbb{P}^n$ with $n\ge 2$ whose branch locus is a divisor of bidegree $(2,2)$.
Then $X$ does not admit any int-amplified endomorphism.
\end{lemma}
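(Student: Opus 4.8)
The plan is to exploit the two Mori fibrations of $X$ so as to reduce everything to facts about endomorphisms of projective spaces and of smooth quadrics. Write $\pi\colon X\to\mathbb{P}^n\times\mathbb{P}^n$ for the double cover and $B$ for its branch divisor (which we take to be smooth, the relevant case), so that $X$ is a smooth Fano manifold with $-K_X=\pi^*\mathcal{O}(n,n)$; by the Lefschetz hyperplane theorem $\rho(X)=2$, the two projections $q_i:=p_i\circ\pi\colon X\to\mathbb{P}^n$ are precisely the two extremal contractions of $X$, one has $(q_1,q_2)=\pi$, and for general $t$ the fibre $q_i^{-1}(t)$ is a smooth quadric $Q^n$ (the double cover of $\mathbb{P}^n$ branched along a smooth quadric). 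Now suppose $f\colon X\to X$ is int-amplified. Since $f_*$ permutes the two extremal rays of $\overline{NE}(X)$, after replacing $f$ by $f^2$ I may assume $q_i\circ f=g_i\circ q_i$ for finite surjective $g_i\colon\mathbb{P}^n\to\mathbb{P}^n$, hence $\pi\circ f=h\circ\pi$ with $h:=g_1\times g_2$. Each $g_i$ is polarized, say $g_i^*\mathcal{O}(1)=\mathcal{O}(d_i)$; since $f$ is int-amplified, $f^*(-K_X)-(-K_X)=\pi^*\mathcal{O}(n(d_1-1),n(d_2-1))$ is ample, which forces $d_1,d_2\ge 2$ (cf.\ \cite{Men20}).

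For $n\ge 3$ this already suffices. The polarized endomorphism $g_1$ has a Zariski-dense set of periodic points (cf.\ \cite{Fak03}), so I may pick a periodic point $t$ whose fibre $q_1^{-1}(t)$ is a smooth quadric $Q^n$; replacing $f$ by a suitable iterate I may assume $g_1(t)=t$, so that $f$ restricts to a surjective endomorphism $f|_{q_1^{-1}(t)}\colon Q^n\to Q^n$ of degree $\deg(f)/\deg(g_1)$, which is a nontrivial power of $d_2$ and hence $>1$. This contradicts \cite[Proposition 8]{PS89}: a smooth quadric of dimension $\ge 3$ admits no non-isomorphic surjective endomorphism.

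For $n=2$ the fibres $q_i^{-1}(t)\cong\mathbb{P}^1\times\mathbb{P}^1$ do carry such endomorphisms, so I argue through the branch divisor. Let $\tau_X$ be the deck involution of $\pi$. Then $\tau_X\circ f$ is again a lift of $h$ along the degree-$2$ Galois cover $\pi$, so either $\tau_X\circ f=f$ or $\tau_X\circ f=f\circ\tau_X$. In the first case $f$ factors as $f=\sigma\circ\pi$ for a finite surjective morphism $\sigma\colon\mathbb{P}^2\times\mathbb{P}^2\to X$ with $q_1\circ\sigma=g_1\circ p_1$; restricting $\sigma$ to a general fibre $\{s\}\times\mathbb{P}^2$ would yield a finite surjective morphism $\mathbb{P}^2\to q_1^{-1}(g_1(s))\cong\mathbb{P}^1\times\mathbb{P}^1$, and composing with a projection a non-constant morphism $\mathbb{P}^2\to\mathbb{P}^1$, which is impossible. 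Hence $f$ commutes with $\tau_X$, so it preserves $R:=\mathrm{Fix}(\tau_X)$ and induces on $R$ an endomorphism identified, via the isomorphism $\pi|_R\colon R\xrightarrow{\ \sim\ }B$, with $h|_B\colon B\to B$. Here $B$ is a smooth Fano threefold of Picard number $2$ with $-K_B=\mathcal{O}(1,1)|_B$, and $h|_B$ is int-amplified since $(h|_B)^*\!\big(\mathcal{O}(1,1)|_B\big)-\mathcal{O}(1,1)|_B=\mathcal{O}(d_1-1,d_2-1)|_B$ is ample on $B$ (recall $d_1,d_2\ge 2$). By \cite[Theorem 1.4]{MZZ22} the threefold $B$ would then be toric; but a smooth divisor of bidegree $(2,2)$ in $\mathbb{P}^2\times\mathbb{P}^2$ is not toric (it is one of the Mori--Mukai Fano threefolds and does not appear among the smooth toric Fano threefolds). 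This contradiction finishes the proof.

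The two points I expect to be delicate are, first, the descent step: verifying that $q_1,q_2$ are exactly the extremal contractions, that $f$ descends to a genuinely \emph{split} endomorphism $g_1\times g_2$ of $\mathbb{P}^n\times\mathbb{P}^n$ (this uses the equivariant minimal model program of \cite{Men20} together with the rigidity of the two fibrations), and that the numerical positivity forces $d_i\ge 2$; and second, the $n=2$ end-game, where the fibrewise argument of the $n\ge3$ case collapses and one must transport the dynamics onto the branch threefold $B$, invoke the known case of Question~\ref{main-ques-toric} for Fano threefolds, and use the (classical but non-trivial) fact that the $(2,2)$-divisor is not toric. Excluding the degenerate lift $f=\sigma\circ\pi$ is short but genuinely needed; I phrase it here via the non-existence of a non-constant morphism $\mathbb{P}^2\to\mathbb{P}^1$, but one could instead note that there is no finite morphism $\mathbb{P}^n\to Q^n$ (equivalently $Q^n\not\cong\mathbb{P}^n$) and argue uniformly in $n$.
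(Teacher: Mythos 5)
Your proof is correct, but it takes a genuinely different route from the paper's. The paper argues uniformly in $n$: after descending $f$ to $g=g_1\times g_2$ on $\mathbb{P}^n\times\mathbb{P}^n$, it shows that the branch divisor $B_\pi$ of the double cover is totally $g$-invariant (by comparing ramification along $\pi\circ f=g\circ\pi$ and matching up the components of $g^{*}B_i$), and then invokes \cite[Theorem 1.1]{Men20} --- every totally invariant divisor of an int-amplified endomorphism lies in the branch locus --- to contradict the fact that $B_\pi$ dominates both factors while the branch locus of $g_1\times g_2$ is a sum of divisors pulled back from the two factors. You instead restrict the dynamics to subvarieties: for $n\ge 3$ to a smooth quadric fibre over a periodic point (using \cite{Fak03} and \cite[Proposition 8]{PS89}), and for $n=2$, where the fibres $\mathbb{P}^1\times\mathbb{P}^1$ carry plenty of endomorphisms, to the ramification divisor via the deck involution, landing on the non-toric Fano threefold $B$ and invoking the full classification \cite[Theorem 1.4]{MZZ22}. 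Both work; the paper's argument is shorter, uniform in $n$, and needs only the total-invariance machinery, whereas yours is more elementary and geometric for $n\ge 3$ but pays for it at $n=2$ with a much heavier input (the Fano threefold theorem plus the non-toricity of the smooth $(2,2)$ divisor, which you assert from the Mori--Mukai classification rather than prove; a quick justification is $h^{1,2}(B)\neq 0$). Two small slips worth fixing: the Galois-cover dichotomy should compare the two lifts $f$ and $f\circ\tau_X$ of $h\circ\pi$, giving $f\circ\tau_X=f$ or $f\circ\tau_X=\tau_X\circ f$, not ``$\tau_X\circ f=f$''; and the cleanest way to get $d_1,d_2\ge 2$ is the eigenvalue criterion for int-amplified endomorphisms on $\textup{N}^1(X)$ from \cite{Men20}, rather than asserting that the particular divisor $f^{*}(-K_X)-(-K_X)$ is ample. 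Finally, you assume the branch divisor is smooth and irreducible, which is harmless since the lemma is only applied to smooth $X$, but note the paper's proof allows reducible $B$.
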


\begin{proof}
Suppose to the contrary that $X$ admits an int-amplified endomoprhism $f$.
By the ramification divisor formula,  such $X$ is a  Fano manifold of dimension $2n$ and of  index $n$. 
Let $Y:=Y_1\times Y_2=\mathbb{P}^n\times\mathbb{P}^n$ and $\pi:X\to Y$ the double cover.
Let $p_i:Y\to Y_i\cong\mathbb{P}^n$ be the two projections and  $\pi_i:X\to Y_i$ the two Fano contractions.
Then  $f$ descends to an int-amplified endomorphism $g_i$ on $Y_i$ after iteration (cf.~\cite[Theorem 1.10]{Men20}).
Let $g:=g_1\times g_2$ be the induced map on $Y$.
We have the following commutative diagram.
\[
\begin{tikzpicture}[scale=0.8]
    \node (ax) at (4,4.5) {\rotatebox[origin=c]{180}{\(\circlearrowleft\)}};
    \node (lx) at (4,5) {\(f\)};
    \node (ay) at (4,1.5) {\(\circlearrowright\)};
    \node (ly) at (4,1) {\(g\)};
    \node (X) at (4,4) {\(X\)};
    \node (Y) at (4,2) {\(Y=Y_1 \times Y_2\)};
    \node (Y1) at (0,0) {\(g_1\leftacts Y_1\)};
    \node (Y2) at (8,0) {\(Y_2\rightacts g_2\)};

    \path[-to,font=\scriptsize]
    (X) edge node[right] {\(\pi\)} (Y)
    edge node[pos=0.55,yshift=10pt] {\(\pi_1\)} (Y1)
    edge node[pos=0.55,yshift=10pt] {\(\pi_2\)} (Y2);
    \draw [-to,font=\scriptsize] (Y.south west) --  node[pos=0.4,yshift=-10pt] {\(p_1\)} (Y1.north east);
    \draw [-to,font=\scriptsize] (Y.south east) --  node[pos=0.4,yshift=-10pt] {\(p_2\)} (Y2.north west);
\end{tikzpicture}
\]
Let $B=B_\pi$ be the branch divisor of $\pi$, which is non-empty since $Y$ is simply connected. 
\begin{claim}\label{claim-lem-double}
Write $B=\cup B_i\subseteq Y$ as a union of  prime divisors.
Then each component $B_i$ dominates both $Y_1$ and $Y_2$.
Further, each $B_i$ does not lie in the branch locus $B_g$ of $g$.
\end{claim}
\begin{proof}
Suppose $B_i$ comes from $Y_1$ (similar arguments work for $Y_2$), i.e., $B_i=p_1^*(p_1(B_i))$. 
By the cone theorem (cf.~\cite[Theorem 3.7]{KM98}), $\pi^*B_i=\pi_1^*(p_1(B_i))$ is reduced and irreducible (cf.~\cite[Proposition 2.17]{MZZ22}); in particular, $B_i$ does not lie in the branch locus of $\pi$, a contradiction.
Hence, each $B_i$ dominates both $Y_1$ and $Y_2$.  
The first half of our claim is proved.
For the second half, note that  the branch locus $B_g$ of $g$ is  $p_1^*(B_{g_1})+p_2^*(B_{g_2})$, where $B_{g_1}$ and $B_{g_2}$ are the branch divisors of $g_1$ and $g_2$, respectively. 
By the first half of our claim, we have $B_i\not\subseteq B_g$ and the second half is thus proved.
\end{proof}
Now we come back to proving Lemma \ref{lem-double-cover}. 
By Claim \ref{claim-lem-double}, each irreducible component $B_i$ is not contained in $B_g$ and therefore we have the following reduced decomposition
$$g^*B_i=g^{-1}(B_i)=\sum_k B_i(k) $$
with $B_i(k)$ being distinct prime divisors on $Y$. 
Since $B_i\subseteq B_\pi\subseteq B_{\pi\circ f}=B_{g\circ\pi}$  but $B_i\not\subseteq B_g$ for each $i$, there exists at least one component in $g^*B_i$ which lies in the branch locus of $\pi$.
Without loss of generality, one may assume that $B_i(1)\subseteq B_\pi$ for each $i$.
Since $B_i(1)\neq B_j(1)$ for any $i\neq j$, we have $B_\pi=\cup_iB_i(1)$ by the bijection of the components; hence, $g(B_\pi)=B_\pi$. 
Since $\deg(\pi)=2$, we further have $\pi^*B_\pi=2\pi^{-1}(B_\pi)$.
Then 
$$2f^*\pi^{-1}(B_\pi)=f^*\pi^*B_\pi=\pi^*g^*B_\pi=\pi^*(g^{-1}(B_\pi)).$$
Comparing the coefficients of both sides, one has $g^{-1}(B_\pi)\subseteq B_\pi$.
In particular,  we have $g^{-1}(B_\pi)=B_\pi$.
However, since $g$ is int-amplified,  every component of $B_\pi$ is contained in the branch locus of $B_g$ (cf.~\cite[Theorem 1.1]{Men20}), a contradiction to Claim \ref{claim-lem-double}.
\end{proof}

\begin{lemma}\label{lem-middle-index-bidegree11}
Let $X$ be a  Fano manifold of dimension $2r$ and of  index $r$.
If $X$ is the intersection of two smooth divisors in $\mathbb{P}^{r+1}\times\mathbb{P}^{r+1}$ of bidegree $(1,1)$, then  $X$  does not admit any int-amplified endomorphism.
\end{lemma}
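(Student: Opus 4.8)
The plan is to run the equivariant minimal model program for $f$ and then derive a contradiction from a special, extra‑dimensional fibre of one of the two projections.

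\emph{Setup and descent.} Suppose, for contradiction, that $f\colon X\to X$ is int‑amplified. Since $X$ is the complete intersection of two ample $(1,1)$‑divisors in $\mathbb{P}^{r+1}\times\mathbb{P}^{r+1}$ and $\dim X=2r\ge 3$, the Lefschetz hyperplane theorem gives $\rho(X)=2$, with $\textup{N}^1(X)$ freely generated by $\eta:=\pi_1^*\mathcal{O}(1)$ and $\xi:=\pi_2^*\mathcal{O}(1)$, where $\pi_1,\pi_2\colon X\to\mathbb{P}^{r+1}$ are the two projections; these are precisely the two fibre‑type extremal contractions of $X$ (the general fibre of each being a linear $\mathbb{P}^{r-1}$), and $-K_X=r(\xi+\eta)$ by adjunction. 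An int‑amplified endomorphism permutes the extremal rays of $\overline{\textup{NE}}(X)$, so after replacing $f$ by an iterate it fixes both and descends along each $\pi_i$ to a surjective endomorphism $g_i\colon\mathbb{P}^{r+1}\to\mathbb{P}^{r+1}$ with $\pi_i\circ f=g_i\circ\pi_i$ (cf.\ \cite[Theorem 1.10]{Men20}, as in the proof of Lemma~\ref{lem-double-cover}). Since $\rho(\mathbb{P}^{r+1})=1$, each $g_i$ is $q_i$‑polarized; so $f^*\eta=q_1\eta$ and $f^*\xi=q_2\xi$, and $f^*(\xi+\eta)-(\xi+\eta)=(q_2-1)\xi+(q_1-1)\eta$ must be ample, which (being an element of the two‑dimensional nef cone spanned by $\xi$ and $\eta$) forces $q_1,q_2\ge 2$.

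\emph{A fibre isomorphic to $\mathbb{P}^{r}$.} Write $X=V(Q_1,Q_2)$ with $Q_i\in H^0(\mathcal{O}(1,1))$ of associated matrices $B_i$. The fibre $\pi_1^{-1}([y_0])$ is the linear subspace of $\mathbb{P}^{r+1}$ cut out by the two linear forms with coefficient vectors $B_1 y_0$ and $B_2 y_0$, so it is isomorphic to $\mathbb{P}^{r}$ as soon as these vectors span a line. As $\det(c_1B_1+c_2B_2)$ is homogeneous of degree $r+2\ge 1$ in $[c_1:c_2]$, it has a zero (if it vanished identically, $X$ would be singular), giving $(c_1^0,c_2^0)\ne 0$ and $y_0\ne 0$ with $(c_1^0B_1+c_2^0B_2)y_0=0$; smoothness of $X$ then rules out $B_1y_0=B_2y_0=0$, so $F:=\pi_1^{-1}([y_0])\cong\mathbb{P}^{r}$. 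The projection $\pi_2$ maps $F$ isomorphically onto a hyperplane of $\mathbb{P}^{r+1}$, hence $\xi|_F=\mathcal{O}_{\mathbb{P}^{r}}(1)$ and $\eta|_F=\mathcal{O}_F$; since $F\hookrightarrow X$ is a regular embedding, adjunction yields
$$\det\mathcal{N}_{F/X}=\omega_F\otimes(-K_X)|_F=\mathcal{O}_{\mathbb{P}^{r}}(-(r+1))\otimes\mathcal{O}_{\mathbb{P}^{r}}(r)=\mathcal{O}_{\mathbb{P}^{r}}(-1).$$

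\emph{The contradiction.} As $f$ is finite and $\pi_1\circ f=g_1\circ\pi_1$, the image $F':=f(F)$ lies in $\pi_1^{-1}(g_1([y_0]))$, which therefore has dimension $\ge r$; an $(r+1)$‑dimensional fibre would again make $X$ singular, so $F'=\pi_1^{-1}(g_1([y_0]))\cong\mathbb{P}^{r}$ and $h:=f|_F\colon F\to F'$ is a finite surjective morphism $\mathbb{P}^{r}\to\mathbb{P}^{r}$ with $h^*\mathcal{O}_{F'}(1)=(f^*\xi)|_F=\mathcal{O}_F(q_2)$. The differential of $f$ along $F$ carries $T_F$ into $h^*T_{F'}$ and hence induces $\phi\colon\mathcal{N}_{F/X}\to h^*\mathcal{N}_{F'/X}$, which is generically an isomorphism because $F$ is not contained in the ramification divisor $R_f$ of $f$. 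Taking determinants, $\det\phi$ is a nonzero homomorphism $\mathcal{O}_{\mathbb{P}^{r}}(-1)\to h^*\mathcal{O}_{\mathbb{P}^{r}}(-1)=\mathcal{O}_{\mathbb{P}^{r}}(-q_2)$, i.e.\ a nonzero global section of $\mathcal{O}_{\mathbb{P}^{r}}(1-q_2)$; since $q_2\ge 2$ this space is zero, and this contradiction proves the lemma.

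\emph{Where the difficulty lies.} The descent and the determinant bookkeeping are routine; the delicate points are (i) the construction of $F\cong\mathbb{P}^{r}$ — the determinantal argument must genuinely exclude, for a \emph{smooth} $X$, the degenerate cases where the rank drops to $0$ or the fibre fails to be reduced — and, above all, (ii) the assertion that $F\not\subseteq R_f$, which is exactly what makes $\phi$ generically nondegenerate. For (ii) the main work is a local analysis of $f$ along $F$; alternatively one can use the decomposition $R_f=\pi_1^*R_{g_1}+R_{f/g_1}$ to locate the components of $R_f$ that could meet $F$, and arrange (after a further iterate of $f$) that the relevant $\mathbb{P}^{r}$‑fibre lies over a point of $\mathbb{P}^{r+1}$ off the branch locus of $g_1$. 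I expect step (ii) to be the real obstacle.
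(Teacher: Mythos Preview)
Your normal-bundle argument is clean and the strategy is genuinely different from the paper's, but the gap you flag at~(ii) is real and neither of your proposed patches closes it. The decomposition $R_f=\pi_1^*R_{g_1}+R_{f/g_1}$ does not yield an effective relative term: one computes $[R_f]-\pi_1^*[R_{g_1}]=r(q_2-1)\xi-2(q_1-1)\eta$, and since the effective cone of $X$ is spanned by $\xi$ and $\eta$ (a prime divisor with negative $\eta$-coefficient would have negative intersection with the curves in the fibres of $\pi_2$ and hence contain all of them), this class is not effective once $q_1\ge 2$. So the identity holds only numerically, not as a decomposition into effective pieces, and gives no control on which components of $R_f$ can contain $F$. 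Your second suggestion --- iterating $f$ until the chosen $\mathbb{P}^r$-fibre sits over a point off the branch locus of $g_1$ --- cannot work either: there are exactly $r+2$ such fibres (see \cite{BW96}, \cite{Lan98}), lying over $r+2$ specific points of $\mathbb{P}^{r+1}$ that are $g_1$-periodic; iterating merely permutes these same points and never moves any of them. I do not see how to guarantee $F\not\subseteq R_f$ from your setup alone.

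The paper sidesteps this issue by a completely different mechanism. It uses all $r+2$ special fibres $F_1,\dots,F_{r+2}$ of $\pi_1$ at once: their $\pi_2$-images are $r+2$ \emph{distinct} hyperplanes $E_i\subseteq\mathbb{P}^{r+1}$ (distinctness is checked via the description of $X$ as a $(1,1)$-divisor in $\mathbb{P}(T_{\mathbb{P}^{r+1}})$) which, after iteration, are $g_2^{-1}$-invariant; hence $(\mathbb{P}^{r+1},\sum E_i)$ is a toric pair by \cite{Zho21}. A special $\mathbb{P}^r$-fibre of the \emph{other} projection $\pi_2$ is then forced to lie over a torus-fixed point $v=\bigcap_{j\ne i}E_j$, and one obtains the contradiction $\mathbb{P}^r\cong\pi_2^{-1}(v)=\bigcap_{j\ne i}\pi_2^*E_j$, the right-hand side being an intersection of $r+1$ Cartier divisors in a $2r$-fold and hence of dimension $r-1$. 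No information about $R_f$ is needed.
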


\begin{proof}
It is known that $\rho(X)=2$  and  $X$ admits two non-equidimensional Fano contractions onto $\mathbb{P}^{r+1}$ (cf.~\cite[p.~24]{Wis94} and \cite[p.~151]{IP99}). 
Suppose that $X$ admits an int-amplified endomorphism $f$. 
Denote by $\pi_i$ the two Fano contraction onto $Y_i\cong\mathbb{P}^{r+1}$.
By \cite[Theorem 6.8]{BW96} and \cite[Theorem 5.2, Subsection 9.1.2 Case (3)]{Lan98}, there are $r+2$ isolated $r$-dimensional special fibres $\mathbb{P}^r$ for each $\pi_i$ (and other fibres are all $\mathbb{P}^{r-1}$). 
We denote by $F_1,\cdots,F_{r+2}$ the special fibres of $\pi_1:X\to Y_1$.
By the equivariant minimal model program (\cite[Theorem 1.10]{Men20}; cf.~\cite[Theorem 1.8]{MZ18}), after iteration,  all of $F_i$ are $f^{-1}$-invariant. 
By \cite[Lemma 7.5]{CMZ20}, after iteration, 
$E_i:=\pi_2(F_i)\subseteq Y_2$ is a $(g_2:=f|_{Y_2})^{-1}$-invariant divisor, noting that $F_i$ is not $\pi_2$-contracted.

We claim that $E_i$ and $E_j$ are mutually different.
Suppose the claim for the time being.
Then by \cite[Corollary 1.2]{Zho21}, we see that $(Y_2,\sum_{i=1}^{r+2} E_i)$ is a toric pair with each $E_i\cong \mathbb{P}^r$ and $\sum E_i$ is  a simple normal crossing divisor.
Let $v\in Y_2$ be a special point such that $X_v:=\pi_2^{-1}(v)\cong\mathbb{P}^r$ is a singular fibre. 
By \cite[Theorem 1.10]{Men20}, $g_2^{-1}(v)=v$ after iteration and hence $v$ lies in the branch locus of $g_2$. 
On the other hand, since $g_2: Y_2\backslash\sum E_i\to Y_2\backslash \sum E_i$ is \'etale (cf.~\cite[Theorem 1.4]{MZg23}), we have $v\in \sum E_i$.
We may assume that $v\in S:=E_{i_1}\cap\cdots\cap E_{i_k}$ but $v\not\in E_j$ for any $j\not\in\mathcal{S}:=\{i_1,\cdots,i_k\}$. 
As $S$ is also $g_2^{-1}$-invariant and $(S,\sum_{j\not\in\mathcal{S}}E_j|_S)$ is also a toric pair,  the restriction $g_2|_S:S\backslash\sum_{j\not\in\mathcal{S}}E_j|_S\to S\backslash\sum_{j\not\in\mathcal{S}}E_j|_S$ is \'etale (cf.~[Ibid.]).   
In particular, $k=r+1$ and 
$$v=E_1\cap\cdots\cap E_{j-1}\cap E_{j+1}\cap\cdots\cap E_{r+2}$$
for some $j$. 
Then we get a contradiction by the following equality
$$\mathbb{P}^r\cong X_v=\pi_2^{-1}(v)=\pi_2^*E_1\cap\cdots\cap \pi_2^*E_{j-1}\cap\pi_2^*E_{j+1}\cap\cdots\cap \pi_2^*E_{r+2},$$
noting that the right hand side is the intersection of $r+1$ different Cartier divisor and thus is of dimension $2r-(r+1)=r-1$. 

So we are remained to show $E_i~(i=1,..,r+2)$ are mutually distinct. 
Note that $X$ can also be regarded as a bidegree $(1,1)$ divisor on $W:=\mathbb{P}(T_{\mathbb{P}^{r+1}})\subseteq\mathbb{P}^{r+1}\times\mathbb{P}^{r+1}$. % where $T_{\mathbb{P}^{r+1}}$ is the tangent bundle of $\mathbb{P}^{r+1}$. 
Then $F_i$ is also the fiber of the  projection $\varphi_1 :W\to Y_1$. 
The other contraction $\varphi_2:W\to Y_2$
is given by $(y,\xi)\mapsto H$, where $y\in Y_1$, $\xi$ is a hyperplane of $T_{Y_1,y}$ and $H$ is the unique hyperplane of $Y_1$ passing through  $y$ such that $T_{H,y}=\xi$. 
Here, we identify $Y_2$ with the dual space of $Y_1$. 
So the $\varphi_2$-images of distinct fibers of $\varphi_1$  are distinct hyperplanes. 
In particular, $E_i$ are mutually distinct. 
We finish the proof of our claim and also the lemma. 
\end{proof}

\begin{proof}[Proof of Proposition \ref{prop-mukai-higher-rho}]
Let $f:X\to X$ be an int-amplified endomorphism. 
In  the view of \cite[Theorem 3]{Nak02} and \cite[Theorem 1.4]{MZZ22}, we may assume $\dim(X)\ge 4$. 
We shall use the classification in \cite[Section 3]{Muk89}. %(cf.~\cite{Wis90}).
Since $\rho(X)\ge 2$, by \cite[Theorem 7]{Muk89}, $X$ is either of dimension 4 and of product type, or $X$ is isomorphic to a linear section of one of the following  (where $Q^n$ denotes a smooth quadric  of dimension $n$). 
\begin{enumerate}
\item a double cover of $\mathbb{P}^2\times\mathbb{P}^2$ whose branch locus is a divisor of bidegree $(2,2)$;
\item a divisor of $\mathbb{P}^2\times\mathbb{P}^3$ of bidegree $(1,2)$;
\item $\mathbb{P}^3\times\mathbb{P}^3$;
\item $\mathbb{P}^2\times Q^3$;
\item the blow-up of $Q^4\subseteq \mathbb{P}^5$ along a conic not contained in a plane of $Q^4$;
\item the flag variety of $\textup{Sp}(2)/B$, where $B$ is a Borel subgroup. 
\item the blow-up of $\mathbb{P}^5$ along a line.
\item the $\mathbb{P}^1$-bundle $\mathbb{P}_{Q^3}(\mathcal{O}\oplus\mathcal{O}(1))$ over $Q^3\subseteq\mathbb{P}^4$;
\item the $\mathbb{P}^1$-bundle $\mathbb{P}_{\mathbb{P}^3}(\mathcal{O}\oplus\mathcal{O}(2))$ over $\mathbb{P}^3$.
\end{enumerate}

If $\dim(X)=4$ and is of product type $X_1\times X_2$, then by \cite[Theorem 1.10]{Men20}, $f$ descends to both  $X_i$ after iteration and hence both $X_i$ 
are toric (cf.~\cite[Theorem 1.4]{MZZ22} and \cite[Theorem 3]{Nak02}), noting that $\dim(X_i)\le 3$.
Therefore, we may assume in the following that $X$  is one of Case $(1)\sim(9)$  listed above or their linear sections.

If $X$ is of Case (1), then our theorem follows from Lemma \ref{lem-double-cover}. 
If $X$ is of Case (5) or (8), then $\dim(X)=4$ and by \cite[Theorem 1.10]{Men20}, $f$ descends to $Q^3$ or $Q^4$, a contradiction to \cite[Proposition 8]{PS89}. 
If $X$ is of Case (9), then $\dim(X)=4$ and $X$ is a splitting $\mathbb{P}^1$-bundle over $\mathbb{P}^3$ and hence toric (cf.~\cite[Proposition 2.9]{MZZ22}).
So we verify Case (9) and exclude Case (1), (5) and (8).

Suppose that $X$ is of Case (2), i.e., $X$ is a divisor on $\mathbb{P}^2\times\mathbb{P}^3$ of bidegree $(1,2)$.
Then $\dim(X)=4$ and $X$ admits two Fano contractions $\pi$ onto $Z:=\mathbb{P}^3$ and $\tau$ onto $W:=\mathbb{P}^2$.
By \cite[Theorem 6.8]{BW96} and \cite[Theorem 5.2, Subsection 9.1.2 Case (4)]{Lan98}, there are 8 singular $2$-dimensional fibres $F_1,\cdots, F_8$ of $\pi$, all of which dominate $W$. 
By \cite[Theorem 5.1]{Fak03} and \cite[Theorem 1.10]{Men20}, we can pick a general $f|_W$-periodic point $w\in W$ and after iteration, its fibre $X_w$ admits an int-amplified endomorphism $f_w:=f|_{X_w}$. 
Note that $X_w\cong\mathbb{P}^1\times\mathbb{P}^1$ is a smooth quadric surface (cf.~\cite[Subsection 9.1.2 Case (4)]{Lan98}). 
On the other hand, since every $F_i$ dominates $W$ along the second contraction, it follows that $F_i\cap X_w\neq \emptyset$ and hence $F_i\cap X_w$ contains either one  $f_w^{-1}$-invariant point or one $f_w^{-1}$-invariant line after iteration. 
By \cite[Theorem 3.2]{MZZ22}, there are at most 4 $f_w^{-1}$-invariant lines.
Since $X_w\cong \ell_1\times\ell_2\cong \mathbb{P}^1\times\mathbb{P}^1$, at least one of $\ell_i$ has more than $3$ $(f|_{\ell})^{-1}$-invariant points after iteration (cf.~\cite[Theorem 1.10]{Men20}).
This leads to a contradiction (cf.~e.g.~\cite[Theorem 1.1]{Zho21}). 
So Case (2) is excluded.

Suppose that $X$ is of Case (3). 
If $\dim (X)=6$, then $X\cong\mathbb{P}^3\times\mathbb{P}^3$ and hence toric.  
If $\dim (X)=5$, then $X\cong\mathbb{P}(T_{\mathbb{P}^3})$, which is absurd since the tangent bundle $T_{\mathbb{P}^3}$ does not split (cf.~\cite[Proposition 3]{Ame03}). 
If $\dim (X)=4$, then $X$ is a complete intersection of two divisors of bidegree $(1,1)$ on $\mathbb{P}^3\times\mathbb{P}^3$ and by Lemma \ref{lem-middle-index-bidegree11}, we get a contradiction.

Suppose that $X$ is of Case (4).
If $\dim(X)=5$, i.e., $X\cong \mathbb{P}^2\times Q^3$, then by \cite[Theorem 1.10]{Men20}, we get a contradiction by noting that $Q^3$ does not admit a non-isomorphic surjective endomorphism (cf.~\cite[Proposition 8]{PS89}). 
So we may assume $\dim(X)=4$ and then 
$X$ is a linear section of $\mathbb{P}^2\times Q^3$, i.e., a divisor of bidegree $(1,1)$.
Then $X$ admits a Fano contraction to $Q^3$; see \cite[p. 151]{IP99}.
In particular, our assumption contradicts \cite[Proposition 8]{PS89}.
So we exclude Case (4).

Suppose that $X$ is of Case (6), i.e., the flag variety of $\textup{Sp}(2)$.
Since $\textup{Sp}(2)$ has two simple roots and $\rho(X)\ge 2$ by assumption, if $P$ is a parabolic subgroup such that $X\cong\textup{Sp}(2)/P$, then $P$ is determined by the two simple roots, in which case $P$ is a Borel subgroup of $\textup{Sp}(2)$.
Then $\dim (X)$ is the number of positive roots, which is 4. 
So $X$ is a rational homogeneous fourfold and our assumption is absurd  (cf.~\cite[Proposition 2]{PS89}).

Finally, suppose that $X$ is of type (7).
If $\dim (X)=5$, then it is a blow-up of $\mathbb{P}^5$ along a line (lying in a toric boundary) and hence $X$ is toric. 
If $\dim (X)=4$, then it is a linear section of the above variety and by  adjunction,  $X$ is a blow-up of $Q^4\subseteq\mathbb{P}^5$ along a line; in particular, $f$ descends to $Q^4$ after iteration, which is absurd (cf.~\cite[Proposition 8]{PS89}). 
So we finish the proof of our proposition.
\end{proof}

\begin{proof}[Proof of Theorem \ref{thm-middle-index}]
By Lemma \ref{lem-index-reduction}, we may assume that $n=2r-1$ or $n=2r$.
If $n=2r-1$, by Lemma \ref{lem-index-reduction} again,   $X$ is toric if and only if $X\cong\mathbb{P}_{\mathbb{P}^r}(\mathcal{O}(2)\oplus\mathcal{O}(1)^{\oplus(r-1)})$ if and only if $X$ admits an int-amplified endomorphism (cf.~\cite[Proposition 3]{Ame03} and \cite[Proposition 8]{PS89}). 
In the following, we may assume that $n=2r$.
We may further assume that $n\ge 6$ in the view of Proposition \ref{prop-mukai-higher-rho}, since Mukai fourfolds have  index 2. 
We shall apply the classification in Lemma \ref{lem-class-middleindex}.

First, we assume $X$ is of type (1).
If $X$ is  $\mathbb{P}_{\mathbb{P}^{r+1}}(\mathcal{O}(2)^{\oplus 2}\oplus\mathcal{O}(1)^{\oplus(r-2)})$ or $\mathbb{P}_{\mathbb{P}^{r+1}}(\mathcal{O}(3)\oplus\mathcal{O}(1)^{\oplus(r-1)})$, then  it is toric and admits an int-amplified endomorphism.
If $X$ is  $\mathbb{P}_{Q^{r+1}}(\mathcal{O}(2)\oplus\mathcal{O}(1)^{\oplus (r-1)})$ or $\mathbb{P}_{Q^4}(\mathbb{E}(1)\oplus\mathcal{O}(1))$, then it is not toric since the smooth quadric $Q$ (of dimension $\ge 3$) is not toric, noting that the big torus action descends along a fibration;  %(cf.~\cite[Proposition 2.1]{Bri11}); 
further, it does not admit an int-amplified endomorphism since so does $Q$ (cf.~\cite[Theorem 1.10]{Men20} and \cite[Proposition 8]{PS89}). 
If $X$ is $\mathbb{P}_{V_d}(\mathcal{O}(1)^{\oplus r})$, then it is not toric and does not admit any int-amplified endomorphism (cf.~Theorem  \ref{main-del-pezzo-non}), noting that the only smooth projective toric variety of Picard number 1 is the projective space.

Second, assume $X$ is of type (2). 
Suppose that $X$ admits an int-amplified endomorphism $f$. 
Then $f$ descends to $Y$ (cf.~\cite[Theorem 1.10]{Men20}) and it follows from \cite[Theorem 5.1]{Fak03} that one can pick a general $f$-periodic fibre $F$ such that $f|_F$ is still int-amplified after iteration (cf.~\cite[Lemma 2.3]{Men20}). 
However, in this case, $F$ is a smooth quadric in $\mathbb{P}^{r+1}$ (with $r\ge 3$) and thus does not admit any non-isomorphic surjective 
endomorphism (cf.~\cite[Proposition 8]{PS89}).   
So our assumption is absurd; in particular, such $X$ is not toric (cf.~e.g.~\cite[Theorem 1.4 and its proof]{MZg23}).

Finally, assume that $X$ is of type (3). 
If $X$ is in the former case of type (3), then $X$ does not admit any non-isomorphic surjective endomorphism $f$; for otherwise, such $f$ would descend to a smooth quadric of dimension $\ge 6$ (cf.~\cite[Theorem 1.10]{Men20}), contradicting  \cite[Proposition 8]{PS89}.
If $X$ is in the latter case of type (3), our theorem follows from 
Lemma \ref{lem-middle-index-bidegree11}.
As a result,  $X$ is not toric, either. 
\end{proof}

We end up this section with the following remark.
\begin{remark}[Connections with \cite{JZ23}]
In \cite[Section 1]{JZ23}, Jia and the second author expected that if $X$ is a  Fano fourfold equipped with a $\mathbb{P}^1$-fibration $\pi$, then the dynamical condition on $X$ will make $\pi$ be a conic bundle, i.e., every fibre is  a conic in $\mathbb{P}^2$.
As indicated by Lemma \ref{lem-middle-index-bidegree11}, our expectation is reasonable. 
Besides, if $\pi$ is a singular conic bundle,  the existence of the rational curve $\ell$ with $-K_X\cdot \ell=1$ (cf.~\cite[Theorem 5.1]{JZ23}) forces the index $i(X)=1$.
So our Theorem \ref{thm-middle-index}  complements \cite[Theorem 1.4]{JZ23}.
\end{remark}

\section{Endomorphisms of Fano manifolds of small Fano index}\label{sec-index<=2}
In the last section, we study Conjecture \ref{main-conj-pn} when the Fano manifolds have small index. 
Propositions \ref{thm-akp-index<=2} and \ref{prop-nor-bdle-negative} are our main results of this section. 
The technical assumption imposed here is inspired by \cite[Proposition 2.1]{Ame97}.

\begin{proposition}\label{thm-akp-index<=2}
Let $X$ be a Fano manifold which is of Picard number 1, of dimension $n$ and of  index $i(X)=2$ (resp.\,$i(X)=1$).  
Let $f:X\to X$ be a surjective endomorphism.
If there is a line $\ell$ (resp.\,a conic) such that the inverse image $f^{-1}(\ell)$ is not contained in the ramification locus of $f$, then $\deg(f)=1$.
\end{proposition}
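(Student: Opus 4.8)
The plan is to combine the ramification formula with a transfer of positivity along the tangent map of $f$. Write $H$ for the ample generator of $\mathrm{Pic}(X)$. Since $\rho(X)=1$, a surjective endomorphism with $\deg(f)>1$ is $q$-polarized for some $q>1$, i.e.\ $f^*H\sim qH$ and $\deg(f)=q^n$, so it suffices to rule out $q\ge 2$. The ramification formula $K_X=f^*K_X+R_f$ and $-K_X\sim i(X)H$ force $R_f\equiv i(X)(q-1)H$, a nonzero effective divisor that is a positive multiple of $H$; and for any irreducible curve $C\subseteq f^{-1}(\ell)$ the projection formula gives $q\,(H\cdot C)=(\deg f|_C)(H\cdot\ell)$.

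First I would analyse $\mathcal N_{\ell/X}$. From $0\to T_\ell\to T_X|_\ell\to\mathcal N_{\ell/X}\to 0$ one has $\deg\mathcal N_{\ell/X}=i(X)(H\cdot\ell)-2\le 0$ in both cases of the statement. Writing $\mathcal N_{\ell/X}=\bigoplus_i\mathcal O_\ell(a_i)$ over $\ell\cong\mathbb P^1$: if all $a_i=0$ the normal bundle is trivial, $\ell$ is a free rational curve, and \cite[Theorem 2]{HM03} already bounds finite morphisms onto $X$, so $\deg(f)=1$ (cf.\ Proposition~\ref{pro-index=2}); hence I may assume some summand equals $\mathcal O_\ell(-a)$ with $a\ge 1$ (automatic when $i(X)=1$ and $H\cdot\ell=1$ by Lemma~\ref{lem-normalbdle-index1}).

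Now fix an irreducible component $C\subseteq f^{-1}(\ell)$ with $C\not\subseteq R_f$, set $e:=\deg(f|_C)$, so $H\cdot C=e(H\cdot\ell)/q$; let $\phi\colon\widetilde C\to C\hookrightarrow X$ be the normalization and $\psi:=f\circ\phi$, which factors through $\widetilde C\xrightarrow{g}\ell\hookrightarrow X$ with $\deg g=e$. The tangent map $\mathrm d f\colon T_X\to f^*T_X$ is generically an isomorphism degenerating exactly along $R_f$, so $\phi^*(\mathrm d f)\colon\phi^*T_X\to\psi^*T_X=g^*(T_X|_\ell)$ is generically an isomorphism (as $C\not\subseteq R_f$) carrying the saturation $L$ of $\mathrm{im}(\mathrm d\phi)$ into $g^*T_\ell$; it therefore induces an injection $\overline N_\phi\hookrightarrow g^*\mathcal N_{\ell/X}$ between the locally free quotients $\overline N_\phi:=(\phi^*T_X)/L$ and $g^*\mathcal N_{\ell/X}=g^*(T_X|_\ell)/g^*T_\ell$, and composing with the projection onto $g^*\mathcal O_\ell(-a)$ (generically surjective, hence surjective onto a line bundle on the smooth curve $\widetilde C$) gives a quotient $\overline N_\phi\twoheadrightarrow M$ with $\deg M=-ea<0$. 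Since $H\cdot C\ge 1$ forces $e\ge q/(H\cdot\ell)$, replacing $f$ by $f^N$ and $\ell$ by a suitable component of $(f^N)^{-1}(\ell)$ makes $-\deg M$ arbitrarily large while the anticanonical degree of the preimage curve stays comparatively bounded.

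The remaining step — turning this degeneration into a contradiction with $q\ge 2$ — is the main obstacle, because a bare comparison of degrees of the sheaves above always collapses to the trivial inequality $q\ge 1$. I would instead run the surface argument behind \cite[Proposition 2.1]{Ame97} and Proposition~\ref{pro-quintic-4fold}: pick general members of $|mH|$ ($mH$ very ample) through $\ell$ cutting out a smooth surface $S$ with $\mathcal N_{S/X}|_\ell$ equal to the non-negative part of $\mathcal N_{\ell/X}$, so $(\ell^2)_S=-a<0$; using a Bertini/ramification argument in the style of the proof of Proposition~\ref{pro-quintic-4fold}, arrange that $S':=f^{-1}(S)$ is normal with only isolated singularities and $C$ a Cartier divisor on it, whence $(C^2)_{S'}\le (f_*C\cdot\ell)_S=e\,(\ell^2)_S<0$; finally invoke $b_2(X)=1$ and the second–infinitesimal–neighbourhood/Picard-rank computation of \cite{Bra92} (as in Claim~\ref{claim_1-dim-del-pezzo}) to conclude that, for $\deg(f)\gg 1$, $C\equiv\lambda H|_{S'}$ with $\lambda>0$, contradicting $(C^2)_{S'}<0$. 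The delicate points are all concentrated in this surface step: that $S$ can be chosen so as to capture the negative direction of $\mathcal N_{\ell/X}$, the control on $\mathrm{Sing}\,S'$, and the Picard statement when $\ell$ is forced to lie on $S$. The case $i(X)=2$ needs no change, and for $i(X)=1$ with a degree-$2$ curve one has $\deg\mathcal N_{\ell/X}=0$, so either $\mathcal N_{\ell/X}$ is trivial (reduce via \cite{HM03} as above) or it again has a negative summand and the same argument applies.
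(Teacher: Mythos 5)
Your setup, the reduction via \cite[Theorem 2]{HM03} when $\mathcal{N}_{\ell/X}$ is trivial, and the construction of the generically injective map $\overline{N}_\phi\hookrightarrow g^*\mathcal{N}_{\ell/X}$ with a quotient line bundle of degree $-ea<0$ all match the paper's argument (this map is the paper's $\varphi_1$). The genuine gap is what comes next: you assert that ``a bare comparison of degrees\dots always collapses to the trivial inequality $q\ge 1$'' and abandon this route. The paper closes the argument at exactly this point by a positivity input you overlooked, and it is the whole content of the proof: there is an integer $m$ \emph{depending only on $X$} such that $T_X(mH)$ is globally generated (take $-uK_X$ very ample, so $\Omega_X(-2uK_X)$ is globally generated, and use $T_X\otimes K_X=\wedge^{n-1}\Omega_X$). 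Since $\ell_0$ is reduced, hence generically a local complete intersection, $T_X|_{\ell_0}\to\mathcal{N}_{\ell_0/X}$ is surjective at the generic point, so $\mathcal{N}_{\ell_0/X}(mH)$ is generically globally generated; composing with your generically surjective map onto the summand of $f^*\mathcal{N}_{\ell/X}|_{\ell_0}$ of degree $\le qa_{n-1}$ produces a generically globally generated subsheaf of a line bundle of degree $\le qa_{n-1}+m(H\cdot\ell_0)$ on (the normalization of) $\ell_0$, which must therefore be $\ge 0$. This yields $q\le -m(H\cdot\ell)/a_{n-1}$ with $m$ independent of $f$, and the contradiction follows because $q$ can be made arbitrarily large. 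So the comparison is not trivial once the uniform twist $m$ is in play; this is the missing idea.

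The replacement you propose does not repair the gap. A complete-intersection surface $S=X\cap D_1\cap\cdots\cap D_{n-2}$ with $D_j\in|mH|$ has $\mathcal{N}_{S/X}|_\ell\cong\mathcal{O}_\ell(m(H\cdot\ell))^{\oplus(n-2)}$, with all summands of equal degree, so it cannot be ``the non-negative part of $\mathcal{N}_{\ell/X}$'' as soon as that part is not of this shape (already for $\mathcal{N}_{\ell/X}=\mathcal{O}(1)\oplus\mathcal{O}^{\oplus(n-3)}\oplus\mathcal{O}(-1)$, which occurs for index $2$). More seriously, the step that feeds \cite[Lemma 2.3]{Bra92} is the \emph{splitting} of $0\to\mathcal{N}_{\ell/S}\to\mathcal{N}_{\ell/X}\to\mathcal{N}_{S/X}|_\ell\to 0$; in the degree-$5$ del Pezzo case of Proposition \ref{pro-quintic-4fold} this is forced because $\mathcal{N}_{\ell/X}=\mathcal{O}(1)^{\oplus 2}\oplus\mathcal{O}(-1)$ admits a retraction onto any sub-line-bundle of degree $-1$ with locally free quotient $\mathcal{O}(1)^{\oplus 2}$, whereas in general $\operatorname{Ext}^1(\mathcal{N}_{S/X}|_\ell,\mathcal{N}_{\ell/S})\neq 0$ and nothing forces the geometric extension to split (the Euler sequence $0\to\mathcal{O}(-1)\to\mathcal{O}^{\oplus 2}\to\mathcal{O}(1)\to 0$ is the standard obstruction). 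Finally, the Picard-rank computation of Claim \ref{claim_1-dim-del-pezzo} only bites ``for $\deg(f)\gg 1$,'' so even granting the surface step you would still owe an iteration argument. These are not merely the ``delicate points'' you flag; as stated the surface route breaks down, and the proof is incomplete without the global-generation bound above.
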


\begin{proof}
Suppose  that $X$ admits a non-isomorphic  surjective endomorphism $f$ such that $f^{-1}(\ell)$ is not contained in the ramification locus. 
Then the inverse image $\ell':=f^*(\ell)$ contains a reduced and irreducible component $\ell_0$. 
We analyze the  normal bundle sequence, 
$$0\to\mathcal{O}_\ell(2)\to T_X|_\ell\to \mathcal{N}_{\ell/X}\to 0.$$
Suppose first that the index $i(X)=2$. 
Then we have $\det \mathcal{N}_{\ell/X}=0$.
If $\mathcal{N}_{\ell/X}=\mathcal{O}_\ell^{\oplus(n-1)}$, then such $X$ contains  a rational curve with trivial normal bundle; in particular, $\deg(f)=1$ (cf.~\cite[Theorem 2]{HM03}).
Therefore, we may assume $\mathcal{N}_{\ell/X}=\oplus\mathcal{O}_\ell(a_i)$ with $a_1\ge \cdots \ge a_{n-1}$ and $a_{n-1}<0$ (which always holds if either $i(X)=1$; see Lemma \ref{lem-normalbdle-index1},  or $i(X)=2$ and $X$ does not contain any rational curve with trivial normal bundle). 

Choosing $\ell'$ and $\ell_0$ as above, we have a natural morphism
$$\varphi_1:\mathcal{N}_{\ell_0/X}\to \mathcal{N}_{\ell'/X}|_{\ell_0}=f^*\mathcal{N}_{\ell/X}|_{\ell_0}=\oplus\mathcal{O}_{\ell_0}(qa_i).$$
Note  that $\varphi_1$ is an isomorphism around a smooth point of $\ell_0$.
Since $\ell_0$ is reduced and hence a local complete intersection, the following map
$$\varphi_2: T_X|_{\ell_0}\to\mathcal{N}_{\ell_0/X}$$
is surjective around the generic point of $\ell_0$.
Fix an embedding $i:X\hookrightarrow \mathbb{P}^N$ with respect to the very ample divisor $-uK_X$ for some positive integer $u$.
Then $\Omega_X(-2uK_X)$ is globally generated.
As $T_X(K_X)=\wedge^{n-1}\Omega_X$, we have $T_X((-2u(n-1)+1)K_X)$ is globally generated. 
In particular, $T_X(m)$ is globally generated for some positive integer $m$ and then it follows from the generic surjectivity of $\varphi_2$ that 
$\mathcal{N}_{\ell_0/X}(m)$ is generically globally generated; hence, $q\le -\frac{m}{a_{n-1}}$. 
However, this is impossible since our $q$ can be arbitrarily large.
\end{proof}

With a few modifications, we end up the paper by extending Proposition \ref{thm-akp-index<=2} to the following. 
We expect that Proposition \ref{prop-nor-bdle-negative} could play a  role for us to study Fano manifolds of small  index, although in general the covering locus of the  lines of the negative type  is quite complicated to describe. 
\begin{proposition}\label{prop-nor-bdle-negative}
Let $X$ be a Fano manifold of Picard number 1, of dimension $n$ and of  index $i(X)$. 
Let $H$ be the fundamental divisor of $X$, i.e., $-K_X\sim i(X)H$. 
Suppose that $X$ satisfies the following conditions.
\begin{enumerate}
\item There is a family $\mathcal{C}$ of lines such that the covering locus is an effective divisor $D$.
\item  $D\sim \lambda H$ with $\lambda\ge 2\cdot i(X)$.
\item The normal bundle of a general member $[\ell]\in \mathcal{C}$ in $X$ has a negative summand.
\end{enumerate}
Then $X$ does not admit any non-isomorphic surjective endomorphism.
\end{proposition}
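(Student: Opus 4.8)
The plan is to argue by contradiction, mimicking the strategy of Proposition \ref{thm-akp-index<=2} but now exploiting condition (2) on the numerical size of the covering divisor $D$ to force the existence of a suitable line whose preimage is not swallowed by the ramification locus. Suppose $f:X\to X$ is a non-isomorphic surjective endomorphism; since $\rho(X)=1$, it is $q$-polarized for some $q>1$, so $f^*H\sim qH$ and $f^*D\sim qD$. The first step is to produce a general member $[\ell]\in\mathcal{C}$ such that $f^{-1}(\ell)$ is \emph{not} contained in the ramification divisor $R_f$. The key point is a counting/positivity argument: the ramification divisor satisfies $K_X\sim f^*K_X+R_f$, hence $R_f\sim (q-1)(-K_X)\sim (q-1)\cdot i(X)\,H$, while the branch divisor $B_f=f(R_f)$ has class bounded in the same way. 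Now $f^{-1}(D)\sim qD\sim q\lambda H$, and a general line $[\ell]\in\mathcal{C}$ sweeps out $D$, so its preimage sweeps out $f^{-1}(D)$; if every such preimage curve lay inside $R_f$, then $f^{-1}(D)$ (a divisor of class $q\lambda H$ with $\lambda\ge 2\,i(X)$) would be a subvariety of $R_f$ (of class $(q-1)i(X)H$), which is numerically impossible once $q\lambda > (q-1)i(X)$, i.e.\ always under (2). So we may fix such an $\ell$ and a reduced irreducible component $\ell_0\subseteq f^*(\ell)$ not contained in $R_f$.

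The second step is the normal-bundle contradiction, exactly as in Proposition \ref{thm-akp-index<=2}. Writing $\mathcal{N}_{\ell/X}=\bigoplus\mathcal{O}_\ell(a_i)$, condition (3) gives some $a_i<0$; set $a:=\min_i a_i<0$. Because $\ell_0$ is reduced and maps onto $\ell$ via the finite map $f$ of degree $q'$ along this curve (where $q'\mid q$, but in any case $q'\ge $ some growing quantity after replacing $f$ by $f^m$, since $H\cdot\ell_0 = q\,(H\cdot\ell)/q'$ and the total degree grows), one gets a natural generically-isomorphic map $\mathcal{N}_{\ell_0/X}\to f^*\mathcal{N}_{\ell/X}|_{\ell_0}=\bigoplus\mathcal{O}_{\ell_0}(q'' a_i)$ with $q''$ unbounded under iteration; composing with the generically surjective map $T_X|_{\ell_0}\to\mathcal{N}_{\ell_0/X}$, and using that $T_X(m)$ is globally generated for a fixed $m$ (obtained from $T_X(K_X)=\wedge^{n-1}\Omega_X$ and global generation of $\Omega_X$ twisted by a large multiple of $-uK_X$), we conclude $\mathcal{N}_{\ell_0/X}(m)$ is generically globally generated, forcing $q''\le -m/a$, contradicting unboundedness of $q''$ after replacing $f$ by a high iterate. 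I would phrase this so that the contradiction is reached after iterating $f$: replacing $f$ by $f^{N}$ sends $q\mapsto q^N$, and the selection step in the first paragraph is stable under iteration since the class inequality $q^N\lambda>(q^N-1)i(X)$ persists.

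The main obstacle I anticipate is making the first step fully rigorous: one needs that a \emph{general} member of $\mathcal{C}$ has preimage meeting $X\setminus R_f$, not merely that \emph{some} effective cycle in class $q\lambda H$ does so. The clean way is to note that the locus of lines $[\ell]\in\mathcal{C}$ with $f^{-1}(\ell)\subseteq R_f$ is a closed subfamily whose covering locus is contained in the divisor $B_f$ of class $\le (q-1)i(X)H$ (the image of $R_f$); since the full family $\mathcal{C}$ has covering locus $D$ of class $\lambda H$ which is irreducible (or at least not contained in $B_f$, as $\lambda\ge 2\,i(X)>(q-1)i(X)/q\cdot\ldots$ — here one must be slightly careful and instead argue directly that $D\not\subseteq B_f$ because $D$ is prime of class $\lambda H$ and cannot sit inside a divisor unless that divisor contains $D$ as a component, whereas $B_f$ reduced has class $\le (q-1)i(X)H$, which is fine for large $q$ — so the cleaner inequality to use is that a prime divisor $D\sim\lambda H$ is contained in $B_f$ only if $\lambda\le \deg B_f$, and one rules this out by taking $f$ to a high iterate so $\deg B_{f^N}$ grows but \emph{the class of $D$ is fixed}), the general line of $\mathcal{C}$ is not in this subfamily. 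So the truly safe formulation is: fix $\ell$ general in $\mathcal{C}$; then $\ell\not\subseteq B_{f^N}$ for $N\gg 0$ since $B_{f^N}$ has no fixed components as $N\to\infty$ is false — rather $\ell\cap(X\setminus B_{f^N})\ne\emptyset$ because $\ell\not\subseteq B_{f^N}$, which holds as soon as $B_{f^N}$ does not contain the prime divisor $D\supseteq\ell$, equivalently $D$ is not a component of $B_{f^N}$; and if $D$ were a component of every $B_{f^N}$ then $D\subseteq f^{-N}(D)$ would be $f$-invariant, which one excludes by a separate short argument (an $f^{-1}$-invariant prime divisor on a Fano variety of $\rho=1$ forces, via \cite[Theorem 1.1]{Men20}, that $D$ lies in the branch locus — then the lines covering $D$ have a controllable behaviour and one returns to the normal-bundle argument). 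Tightening exactly this dichotomy is where the real work lies; the normal-bundle step itself is routine given Proposition \ref{thm-akp-index<=2}.
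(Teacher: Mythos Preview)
Your overall two–step strategy matches the paper exactly: first show that $f^{-1}(D)$ is not contained in the ramification divisor $R_f$, then run the normal-bundle argument of Proposition \ref{thm-akp-index<=2}. The second step is fine and is literally what the paper does.

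The gap is in your first step. You write that ``$f^{-1}(D)$ (a divisor of class $q\lambda H$) would be a subvariety of $R_f$ (of class $(q-1)i(X)H$), which is numerically impossible once $q\lambda>(q-1)i(X)$''. This conflates the cycle $f^*D\sim q\lambda H$ with the reduced preimage $f^{-1}(D)$. The containment $f^{-1}(D)\subseteq \operatorname{Supp}(R_f)$ only tells you that the \emph{reduced} divisor $\sum D_i$ is bounded above by $R_f$, and you have no direct control on the class of $\sum D_i$. Your third paragraph recognizes the problem and tries to route through the branch locus $B_f$, but $B_f$ does not have a clean numerical class (it is $f(R_f)$, not $f_*R_f$), and the argument drifts into an unresolved dichotomy about $f^{-1}$-invariance of $D$.

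The paper's fix is a one-line multiplicity trick that you are missing. If $f^{-1}(D)\subseteq R_f$, then along every irreducible component $D_i$ of $f^{-1}(D)$ the map $f$ is ramified, so in $f^*D=\sum a_iD_i$ every $a_i\ge 2$. Since the coefficient of $D_i$ in $R_f$ is exactly $a_i-1$, one gets
\[
R_f\;\ge\;\sum_i(a_i-1)D_i\;\ge\;\sum_i\tfrac{1}{2}a_iD_i\;=\;\tfrac{1}{2}f^*D,
\]
hence $(q-1)\,i(X)\,H\ge \tfrac{1}{2}q\lambda H\ge q\,i(X)\,H$, a contradiction. This is where hypothesis (2), $\lambda\ge 2\,i(X)$, is actually used; no iteration and no discussion of $B_f$ is needed. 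Once you have this, some component $\ell_0$ of $f^*(\ell)$ is reduced for a general $[\ell]\in\mathcal{C}$, and the rest is verbatim Proposition \ref{thm-akp-index<=2}.
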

%Note that, one can also extend the above two propositions to the boundedness property by imposing a stronger condition on the degree of the prime divisor consisting of the non-free lines. 

\begin{proof}
Suppose to the contrary that $f:X\to X$ is a non-isomorphic surjective  endomorphism which is $q$-polarized with $q>1$. 
We show that $f^{-1}(D)$ is not contained in the ramification  locus of $f$. 
Indeed, by \cite[Theorem 1.10]{MZ18}, %the ramification divisor formula, 
$R_f\sim (q-1)i(X)H$. 
Hence, if $f^{-1}(D)$ is contained in $R_f$, then $f^*D=\sum_i a_iD_i$ with every $a_i\ge 2$.
Then 
$$R_f\ge \sum_i (a_i-1)D_i\ge \sum_i\frac{1}{2}a_iD_i=\frac{1}{2}f^*D.$$
Then we have (in the numerical sense of effective $(n-1)$-cycles)
$$(q-1)i(X)H\ge \frac{1}{2}f^*D\ge qi(X)H$$
 by our assumption, which gives rise to a contradiction.
Therefore, there is some line $\ell$ on $X$ such that the inverse image $\ell':=f^*(\ell)$ contains a reduced and irreducible component $\ell_0$.
The remaining proof is the same as in the proof of Proposition \ref{thm-akp-index<=2}.
\end{proof}

\end{document}